\theoremstyle{definition}
\newtheorem{theorem}{Theorem}[section]
\newtheorem{lemma}[theorem]{Lemma}
\newtheorem{proposition}[theorem]{Proposition}
\theoremstyle{definition}
\theoremstyle{definition}
\newtheorem{remark}[theorem]{Remark}
\newtheorem{example}[theorem]{Example}
\begin{document}
	\baselineskip=16.5pt
	\title[]{Fixed Point Sets and Orbit Spaces of  Wedge of three Spheres}
	\author[Dimpi and Hemant Kumar Singh]{ Dimpi and Hemant Kumar Singh}
	\address{ Dimpi \newline 
		\indent Department of Mathematics\indent \newline\indent University of Delhi\newline\indent 
		Delhi -- 110007, India.}
	\email{dimpipaul2@gmail.com}
	\address{  Hemant Kumar Singh\newline\indent 
		Department of Mathematics\newline\indent University of Delhi\newline\indent 
		Delhi -- 110007, India.}
	\email{hemantksingh@maths.du.ac.in}

	\date{}
	\thanks{ The first author of the paper is  supported by JRF of UGC, New Delhi, with  reference no.: 201610039267.}
	\begin{abstract} 
		
		Let $X$ be a finite  CW-complex having mod $p$  cohomology isomorphic to a wedge of three  spheres $\mathbb{S}^n\vee \mathbb{S}^m \vee \mathbb{S}^l,~ 1\leq n \leq m \leq l.$ The aim of this  paper is to determine the fixed point sets of actions of the cyclic group of prime order   on $X.$ We also  classify the orbit spaces of free actions of $G=\mathbb{Z}_p, p$ a prime or $G=\mathbb{S}^d,~d=1,3,$ on $X$  and derive the Borsuk-Ulam type results.

	\end{abstract}
	\subjclass[2020]{Primary 57S17; Secondary 55R20}
	
	\keywords{Fixed Point Sets; Orbit Spaces;  Fibration; Totally nonhomologous to zero; Leray-Serre spectral sequence.}

	\maketitle
	\section {Introduction}
	\noindent Let $G$ be a topological group acting on a topological space $X.$ The fixed point set  and the orbit space  are two new spaces associated to the transformation group $(G,X).$ It has been an interesting problem to study the cohomological structure of the fixed point sets.  Smith \cite{s} proved that the fixed point sets of homeomorphisms of prime order on a finite  dimensional polyhedron $X$ having the mod $p$ cohomology $n$-sphere (respectively, $n$-disc)  are  mod $p$  cohomology $r$-sphere (respectively, $r$-disc) where $-1 \leq r \leq n$ (respectively, $0 \leq r \leq n$). Su \cite{j1} determined the fixed point sets of a cyclic group of prime order on a space $X$ with the mod $p$ cohomology  isomorphic to the product of spheres $\mathbb{S}^m \times \mathbb{S}^n.$  Bredon \cite{Bredon}  gave an alternate proof of J. C. Su result by proving that if a finite CW-complex  $X$ satisfies poincar\'{e} duality  then each component of the fixed point set also satisfies poincar\'{e}  duality. Chang et al. \cite{c} discussed the fixed point sets of  actions $G=\mathbb{S}^1$ on the product of two spheres. 
	Dolzel et al. \cite{f} and Singh \cite{singh} discussed the similar questions for $G=\mathbb{Z}_p,$ $p$ a prime or $G=\mathbb{S}^1$ actions on a particular wedge of three spheres $\mathbb{S}^n \vee \mathbb{S}^{2n} \vee \mathbb{S}^{3n}.$ In this paper, we contribute to this question by generalizing it for arbitrary  wedge of three spheres $\mathbb{S}^n \vee \mathbb{S}^m \vee \mathbb{S}^l.$ An another thread of research is to classify the orbit spaces when $G$ acts freely on $X.$ First such question was raised by H. Hopf to classify the orbit spaces of free actions of finite cyclic groups on spheres. It is well known that projective spaces $P^n(q),$ where $q=1,2$ and $4,$ are the orbit spaces of free actions of $\mathbb{Z}_2,~ \mathbb{S}^1$ and $\mathbb{S}^3$ on  $\mathbb{S}^n,$ $\mathbb{S}^{2n+1}$ and $\mathbb{S}^{4n+3},$ respectively. Recently, Dey et al. \cite{dey} and Morita et al. \cite{Mattos} determined the orbit spaces of free involutions on real Milnor manifolds and Dold manifolds, respectively. The orbit spaces of free actions of $G=\mathbb{Z}_p, p$ a prime, or $G=\mathbb{S}^d, d=1$ or $3,$ on the cohomology product of spheres $\mathbb{S}^n \times \mathbb{S}^m$ have been studied in \cite{orbit space, anju}. A similar question for a particular wedge of three spheres $\mathbb{S}^n \vee \mathbb{S}^{2n} \vee \mathbb{S}^{3n}$ for $G=\mathbb{Z}_p, p$ a prime, or $G=\mathbb{S}^1$ actions have been answered in \cite{o, hemant sir}. In this  paper,  we  generalize these results. We classify the  orbit spaces of free actions of $G= \mathbb{Z}_p, ~p$ a prime or $G=\mathbb{S}^d,~d=1,3$ on a finite CW-complex having homotopy type of wedge of three spheres $\mathbb{S}^n \vee \mathbb{S}^{m} \vee \mathbb{S}^{l}.$  As an application we derive the Borsuk-Ulam type results.

	\section{Preliminaries}  
	\noindent  We will recall  some known facts that will be used in this paper.  Let $G$ be a compact Lie group. Then there exists a contractible paracompact space $E_G$ on which $G$ acts freely  and the orbit space $B_G$ of this action  is called the classifying space of $G.$ Let $G$ act on a finite CW-complex $X.$ Then the associated  fibration $ X \stackrel{i} \hookrightarrow X_G \stackrel{\pi} \rightarrow B_G$ of this action is called Borel fibration where  $X_G$ is the orbit space $(X\times E_G)/G$ obtained by the diagonal action of  $G.$ 
	\noindent
	For the $E_2$-term of the Leray-Serre spectral sequence associated to the Borel
	fibration $ X \stackrel{i} \hookrightarrow X_G \stackrel{\pi} \rightarrow B_G,$  we have the following results: 
	
	\begin{proposition}(\cite{mac})
		Let $G$ act on a finite CW-complex $X$ and $\pi_1(B _G)$ acts trivially on $H^*(X).$ Then, the system of local coefficients on $B_G$ is simple and $E^{k,i}_2= H^{k}(B_G; H^i(X)), ~k,i \geq 0.$
	\end{proposition}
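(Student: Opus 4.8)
The plan is to obtain the statement as a special case of the standard description of the $E_2$-page of the Leray--Serre spectral sequence of a fibration, so the argument naturally splits into two parts: first recalling why $E_2^{k,i} \cong H^k(B_G; \mathcal{H}^i)$, with $\mathcal{H}^i$ the local coefficient system on $B_G$ built from the fiber cohomology, and then showing that the triviality hypothesis forces each $\mathcal{H}^i$ to be simple.

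First I would set up the cohomology Leray--Serre spectral sequence of the Borel fibration $X \stackrel{i}\hookrightarrow X_G \stackrel{\pi}\rightarrow B_G$. The base $B_G$ is paracompact, and in the cases relevant to this paper it is even a CW complex --- an infinite lens-type space for $G = \mathbb{Z}_p$, $\CP^\infty$ for $G = \mathbb{S}^1$, and $\mathbb{HP}^\infty$ for $G = \mathbb{S}^3$ --- so the spectral sequence exists and converges to $H^*(X_G)$. The classical construction (filtering $X_G$ by preimages of the skeleta of $B_G$, or equivalently working with the Leray sheaf of $\pi$) identifies the $E_2$-term as $E_2^{k,i} \cong H^k(B_G; \mathcal{H}^i(X))$, where $\mathcal{H}^i(X)$ is the locally constant system on $B_G$ whose stalk over $b$ is $H^i(\pi^{-1}(b)) \cong H^i(X)$ and whose transition maps are given by the monodromy representation $\rho_i \colon \pi_1(B_G, b_0) \to \operatorname{Aut}\big(H^i(X)\big)$.

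Next I would use the hypothesis. Over a path-connected CW complex, a local coefficient system is the same datum as a module over the fundamental group, and it is \emph{simple} (that is, constant) precisely when that module structure is trivial, i.e. when $\rho_i$ is the trivial homomorphism. Saying that $\pi_1(B_G)$ acts trivially on $H^*(X)$ is exactly the assertion that every $\rho_i$ is trivial, so each $\mathcal{H}^i(X)$ is the constant system with group $H^i(X)$. Since cohomology with coefficients in a constant system coincides with ordinary singular cohomology, $H^k(B_G; \mathcal{H}^i(X)) = H^k(B_G; H^i(X))$, and substituting this into the identification of $E_2$ yields $E_2^{k,i} = H^k(B_G; H^i(X))$ for all $k, i \geq 0$.

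The real content --- and hence the only ``obstacle'' --- is the general theorem underlying the first step: that the $E_2$-term of the Leray--Serre spectral sequence of a fibration is the cohomology of the base with local coefficients in the cohomology of the fiber (Serre's theorem together with its twisted-coefficient refinement). Granting that, the proposition is immediate. The one genuine technical point to keep in mind is that $B_G$ is typically infinite-dimensional and not a finite complex, so one must invoke the version of the spectral sequence valid over paracompact (or CW) bases rather than the formulation restricted to finite complexes; for the groups considered here this is not an issue, since $B_{\mathbb{Z}_p}$, $\CP^\infty$ and $\mathbb{HP}^\infty$ are all CW complexes.
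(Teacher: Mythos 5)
Your argument is correct and is exactly the standard one: the paper gives no proof of this proposition, simply citing McCleary, and your reduction to the general Leray--Serre $E_2$-description with local coefficients followed by the observation that a trivial $\pi_1(B_G)$-action makes the coefficient system constant is precisely the content of that reference. Nothing further is needed.
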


	\begin{proposition}(\cite{bredon})\label{re} 
		Let $G=\mathbb{Z}_p$ act on a finite CW-complex $X$ and $\pi_1(B _G)$ acts nontrivially on $H^*(X).$  Then, 
		\[
		E_2^{k,i}=
		\begin{cases}
			\text{$ker~ \tau$} & \mbox{for}~ ~k=0 \\
			\text{$ker ~\tau$/$im~ \sigma$} & \text{for } k>0~ \mbox{even}\\
			\text{$ker ~\sigma $/$im ~\tau$} & \text{for } k>0~ \mbox{odd}\\
			
		\end{cases}
		\]
		where $\tau =1-g^*, $ $  \sigma = 1+g^*+g^{*2}+\cdots +g^{*p-1}$ and $g^*$ is induced by a generator $g$ of $G.$
	\end{proposition}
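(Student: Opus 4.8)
The plan is to identify the $E_2$-term with the cohomology of $B_G$ in a suitable system of local coefficients and then to compute that cohomology from the standard periodic free resolution of $\mathbb{Z}$ over the group ring $\mathbb{Z}[\mathbb{Z}_p]$.

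First I would fix the model $E_G=\mathbb{S}^\infty$ with its standard free $\mathbb{Z}_p$-action, so that $B_G=\mathbb{S}^\infty/\mathbb{Z}_p$ (which is $\mathbb{RP}^\infty$ for $p=2$ and an infinite lens space for $p$ odd) and $\pi_1(B_G)\cong\mathbb{Z}_p$. In the Borel fibration $X\hookrightarrow X_G\xrightarrow{\pi} B_G$, the monodromy action of $\pi_1(B_G)$ on $H^*(X)$ is the one induced by the given $G$-action on $X$; thus a generator $g$ of $G$ acts on $H^i(X)$ as $g^*$. Since this action is assumed nontrivial, the associated system of local coefficients $\mathcal{H}^i$ on $B_G$ (the $\mathbb{Z}_p$-module $H^i(X)$) is nonsimple, and the Leray--Serre spectral sequence gives $E_2^{k,i}=H^k(B_G;\mathcal{H}^i)$ for all $k,i\geq0$.

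Next I would compute $H^k(B_G;M)$ for an arbitrary $\mathbb{Z}_p$-module $M$. Give $\mathbb{S}^\infty$ its usual equivariant CW structure with exactly one $\mathbb{Z}_p$-cell (that is, $p$ ordinary cells) in each dimension; its cellular chain complex, viewed as a complex of $\mathbb{Z}[\mathbb{Z}_p]$-modules, is the standard $2$-periodic free resolution
\[
\cdots\xrightarrow{\tau}\mathbb{Z}[\mathbb{Z}_p]\xrightarrow{\sigma}\mathbb{Z}[\mathbb{Z}_p]\xrightarrow{\tau}\mathbb{Z}[\mathbb{Z}_p]\xrightarrow{\varepsilon}\mathbb{Z}\longrightarrow 0,
\]
where $\varepsilon$ is the augmentation and the boundary is multiplication by $\tau=1-g$ in odd degrees and by $\sigma=1+g+\cdots+g^{p-1}$ in even degrees (the attaching map of a $(2j-1)$-cell contributing $g-1$ up to sign, and that of a $2j$-cell contributing $1+g+\cdots+g^{p-1}$). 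By definition $H^*(B_G;M)$ is the cohomology of the complex obtained by applying $\Hom_{\mathbb{Z}[\mathbb{Z}_p]}(-,M)$ to this resolution.

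Finally, using $\Hom_{\mathbb{Z}[\mathbb{Z}_p]}(\mathbb{Z}[\mathbb{Z}_p],M)\cong M$, and the fact that this functor sends multiplication by $\tau$ (respectively $\sigma$) on the resolution to multiplication by $\tau$ (respectively $\sigma$) on $M$, I obtain the cochain complex $M\xrightarrow{\tau}M\xrightarrow{\sigma}M\xrightarrow{\tau}M\xrightarrow{\sigma}\cdots$ concentrated in degrees $\geq0$; reading off its cohomology gives $H^0=\ker\tau$, $H^k=\ker\tau/\im\sigma$ for even $k>0$, and $H^k=\ker\sigma/\im\tau$ for odd $k>0$. Setting $M=H^i(X)$ completes the proof. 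The only delicate point is pinning down the monodromy representation and the boundary operators $\tau,\sigma$ precisely; any remaining ambiguity — for instance replacing $g$ by $g^{-1}$ in these formulas, which can occur depending on orientation conventions — leaves $\ker\tau$, $\im\tau$, $\ker\sigma$ and $\im\sigma$ unchanged, so it does not affect the statement. (One may also note that for $k>0$ these groups are the Tate cohomology groups $\widehat H^{\,k}(\mathbb{Z}_p;H^i(X))$.)
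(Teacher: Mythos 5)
The paper offers no proof of this proposition; it is quoted directly from Bredon's book. Your argument is correct and is essentially the standard proof found in that reference: identify $E_2^{k,i}=H^k(B_G;\mathcal{H}^i)$ via the Leray--Serre spectral sequence with (nonsimple) local coefficients, realize $B_G$ as $\mathbb{S}^\infty/\mathbb{Z}_p$ with its one-$G$-cell-per-dimension CW structure whose chain complex is the $2$-periodic resolution with boundaries alternating between multiplication by $\tau$ and by $\sigma$, and read off the cohomology of $M\xrightarrow{\tau}M\xrightarrow{\sigma}M\xrightarrow{\tau}\cdots$ for $M=H^i(X)$. Your closing remark that the $g$ versus $g^{-1}$ ambiguity in the monodromy leaves $\ker\tau$, $\im\tau$, $\ker\sigma$, $\im\sigma$ unchanged correctly disposes of the only genuinely delicate convention issue.
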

	\begin{proposition}(\cite{mac})
		Suppose that  the system of local coefficients on $B_G$ is simple.	Then the  homomorphism $i^*: H^*(X_G) \rightarrow H^*(X)$ and $\pi^*: H^*(B_G) \rightarrow H^*(X_G)$ are the edge homomorphisms, \begin{center}
			$ H^k(B_G)=E_2^{k,0}\rightarrow E_3^{k,0}\rightarrow \cdots E_k^{k,0}\rightarrow E_{k+1}^{k,0} = E_{\infty}^{k,0} \subset H^k(X_G)$ and $  H^i(X_G) \rightarrow E_{\infty}^{0,i} \hookrightarrow E_{l+1}^{0,i} \hookrightarrow E_{l}^{0,i} \hookrightarrow \cdots \hookrightarrow E_{2}^{0,i} \hookrightarrow E_2^{0,i} \hookrightarrow H^i(X),$ respectively.
		\end{center} 
	\end{proposition}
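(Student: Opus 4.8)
The plan is to realise the Leray--Serre spectral sequence of the Borel fibration $X\hookrightarrow X_G\xrightarrow{\pi}B_G$ from the filtration of $B_G$ by skeleta (pulled back along $\pi$), so that the spectral sequence is natural in the fibration, and then to identify the two edge homomorphisms: the bottom one from the shape of the differentials on the fibre-degree zero row together with the convergence filtration, and each of them, finally, by naturality applied to two auxiliary fibrations.

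First I would read off the edge homomorphisms abstractly. On the bottom row the differential $d_r\colon E_r^{k,0}\to E_r^{k+r,\,1-r}$ lands in $0$ for every $r\ge 2$, so $E_{r+1}^{k,0}=E_r^{k,0}/\im d_r$ is a quotient of $E_r^{k,0}$ and the process stabilises at $r=k+1$; this gives the chain of epimorphisms $H^k(B_G)=E_2^{k,0}\twoheadrightarrow E_3^{k,0}\twoheadrightarrow\cdots\twoheadrightarrow E_{k+1}^{k,0}=E_\infty^{k,0}$. Dually, on the left edge $d_r\colon E_r^{-r,\,i+r-1}\to E_r^{0,i}$ has source $0$, so $E_{r+1}^{0,i}=\ker\!\big(d_r\colon E_r^{0,i}\to E_r^{r,\,i-r+1}\big)$ is a subobject of $E_r^{0,i}$, giving the chain of monomorphisms $E_\infty^{0,i}\hookrightarrow\cdots\hookrightarrow E_2^{0,i}=H^i(X)$. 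Since the convergence filtration $H^n(X_G)=F^0\supseteq F^1\supseteq\cdots\supseteq F^n\supseteq F^{n+1}=0$ satisfies $E_\infty^{p,\,n-p}=F^p/F^{p+1}$, one obtains $E_\infty^{k,0}=F^kH^k(X_G)\subseteq H^k(X_G)$ and $E_\infty^{0,i}=H^i(X_G)/F^1H^i(X_G)$, a quotient of $H^i(X_G)$. This produces exactly the two composites displayed in the statement; it remains to identify them with $\pi^*$ and $i^*$.

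For the first, consider the morphism of fibrations whose total-space map is $\pi\colon X_G\to B_G$, whose base map is $\mathrm{id}_{B_G}$ and whose fibre map is the constant map $X\to\mathrm{pt}$, mapping the Borel fibration to the identity fibration $\mathrm{pt}\hookrightarrow B_G\xrightarrow{\mathrm{id}}B_G$. The induced morphism of spectral sequences has target concentrated in the row $i=0$ with $E_2^{k,0}=H^k(B_G)$; that spectral sequence collapses at $E_2$ and its bottom edge homomorphism $H^k(B_G)\to H^k(B_G)$ is the identity. Naturality of the edge homomorphism then forces the composite $H^k(B_G)\twoheadrightarrow E_\infty^{k,0}\hookrightarrow H^k(X_G)$ to equal $\pi^*$. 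Symmetrically, the morphism of fibrations with total-space map $\mathrm{id}_X$, fibre map $\mathrm{id}_X$, and base map the inclusion $\mathrm{pt}\hookrightarrow B_G$ of the basepoint, going from $X\hookrightarrow X\to\mathrm{pt}$ to the Borel fibration, induces a morphism of spectral sequences whose source is concentrated in the column $k=0$ with $E_2^{0,i}=H^i(X)$, collapses at $E_2$, and has left edge homomorphism the identity of $H^i(X)$; naturality identifies the target's left edge homomorphism $H^i(X_G)\twoheadrightarrow E_\infty^{0,i}\hookrightarrow H^i(X)$ with $i^*$.

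The only genuine work lies in the bookkeeping behind naturality: one must fix a construction of the spectral sequence --- for instance from the decreasing filtration $F^pC^*(X_G)=\ker\!\big(C^*(X_G)\to C^*(\pi^{-1}(B_G^{(p-1)}))\big)$ of the singular cochain complex by preimages of skeleta of $B_G$ --- for which a morphism of fibrations induces a morphism of filtered complexes, and then check that under the convergence isomorphisms the induced maps on the associated graded are literally those coming from $i^*$ and $\pi^*$; one also verifies the (trivial) collapse of the two comparison spectral sequences. All of this is standard and is precisely the content of the cited reference, so in the paper it suffices to invoke \cite{mac}; the sketch above indicates the mechanism.
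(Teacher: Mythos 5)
The paper offers no proof of this proposition; it is quoted verbatim from the cited reference (\cite{mac}), so there is nothing internal to compare against. Your sketch is the standard argument from that reference and is essentially correct: the quotient/subobject structure of the $i=0$ row and $k=0$ column, the convergence filtration identifying $E_\infty^{k,0}=F^kH^k(X_G)$ and $E_\infty^{0,i}=H^i(X_G)/F^1$, and naturality against the two degenerate comparison fibrations. One slip to fix: in the second comparison, the total-space map from $X\hookrightarrow X\to\mathrm{pt}$ to the Borel fibration cannot be $\mathrm{id}_X$ (the targets differ); it must be the fibre inclusion $i\colon X\to X_G$ itself, which is exactly what makes the induced map on total-space cohomology equal to $i^*$ and completes the identification.
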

	\noindent For the proofs about the results of spectral sequences, we refer the reader to \cite{mac}. 
	\begin{proposition}(\cite{bredon})\label{prop 4.5}
		Let $G=\mathbb{Z}_p,~ p$ a prime or $G=\mathbb{S}^1$ acts freely on a finite CW-complex $X.$ If $H^i(X;R)=0~ \forall~ i>n$ then $H^i(X/G;R)=0~ \forall~ i>n,$ where $R=\mathbb{Z}_p,~p$ a prime or $\mathbb{Q},$  respectively. 
	\end{proposition}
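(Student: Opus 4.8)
The plan is to realise $X/G$ as the base of a sphere bundle whose total space has vanishing cohomology in all high degrees and whose Euler class is a power of a single positive-degree class, and then to run a Gysin periodicity argument against the finite-dimensionality of the orbit space. Two standing facts will be used about a free action of $G=\mathbb{Z}_p$ or $G=\mathbb{S}^1$ on a finite CW-complex $X$: the orbit map $X\to X/G$ is a principal $G$-bundle, and $X/G$ is a finite-dimensional paracompact space, so that $H^j(X/G;R)=0$ for all sufficiently large $j$. Apart from the hypothesis $H^i(X;R)=0$ for $i>n$, nothing else enters.

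Fix an integer $m\ge 0$ and let $G$ act on $\mathbb{S}^{2m+1}\subset\mathbb{C}^{m+1}$ by scalar multiplication; this action is free, hence so is the diagonal action of $G$ on $X\times\mathbb{S}^{2m+1}$, and we may form $W_m:=(X\times\mathbb{S}^{2m+1})/G$. The first projection gives a map $\rho\colon W_m\to X/G$ which is the unit sphere bundle of the complex vector bundle $E_m=X\times_G\mathbb{C}^{m+1}\cong L^{\oplus(m+1)}$, where $L=X\times_G\mathbb{C}$ is the line bundle associated to the principal bundle $X\to X/G$. Putting $\omega:=c_1(L)\in H^2(X/G;R)$ — for $G=\mathbb{Z}_p$ this is taken mod $p$, and it equals the pullback of the standard degree-two generator of $H^*(B\mathbb{Z}_p;\mathbb{Z}_p)$ along the classifying map of the cover; for $G=\mathbb{S}^1$ it is the rational Euler class of $X\to X/G$ — the Euler class of the $\mathbb{S}^{2m+1}$-bundle $\rho$ is
\[
e(E_m)=c_{m+1}\bigl(L^{\oplus(m+1)}\bigr)=c_1(L)^{m+1}=\omega^{m+1}\in H^{2m+2}(X/G;R).
\]
The second projection, on the other hand, presents $W_m$ as a fibre bundle with fibre $X$ over the closed manifold $\mathbb{S}^{2m+1}/G$, which has dimension $\le 2m+1$; since $H^i(X;R)=0$ for $i>n$, the Leray--Serre spectral sequence of this bundle (cf.\ Propositions~2.1 and~2.2) has $E_2^{k,l}=0$ whenever $k>2m+1$ or $l>n$, and hence $H^j(W_m;R)=0$ for all $j>2m+1+n$.

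It remains to feed this into the Gysin sequence of $\mathbb{S}^{2m+1}\hookrightarrow W_m\xrightarrow{\ \rho\ }X/G$,
\[
\cdots\longrightarrow H^i(W_m;R)\longrightarrow H^{i-(2m+1)}(X/G;R)\xrightarrow{\ \cup\,\omega^{m+1}\ }H^{i+1}(X/G;R)\longrightarrow H^{i+1}(W_m;R)\longrightarrow\cdots.
\]
For $i>2m+1+n$ the two outer groups vanish, so $\cup\,\omega^{m+1}$ is an isomorphism there; with $j=i-(2m+1)$ this reads $\cup\,\omega^{m+1}\colon H^j(X/G;R)\xrightarrow{\ \cong\ }H^{j+2m+2}(X/G;R)$ for every $j>n$. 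If $H^{j_0}(X/G;R)\ne 0$ for some $j_0>n$, iterating this isomorphism would give $H^{j_0+k(2m+2)}(X/G;R)\ne 0$ for all $k\ge 0$, contradicting $\dim_R(X/G)<\infty$; therefore $H^j(X/G;R)=0$ for all $j>n$. (When $G=\mathbb{S}^1$ one may simply take $m=0$, in which case $W_0\cong X$ and the above is literally the classical Gysin sequence of $\mathbb{S}^1\hookrightarrow X\to X/G$; when $G=\mathbb{Z}_2$ one may instead use the Gysin sequence of the double cover $X\to X/G$ with $\omega\in H^1(X/G;\mathbb{Z}_2)$.)

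The Gysin bookkeeping and the spectral-sequence estimate on $H^*(W_m)$ are routine. The step that genuinely needs care — and which has no counterpart for $G=\mathbb{S}^1$ — is, in the case $G=\mathbb{Z}_p$, to manufacture a sphere bundle over $X/G$ at all (since $\mathbb{Z}_p$ is not itself a sphere) and then to identify its Euler class correctly as the power $\omega^{m+1}$ of a single class of $H^2(X/G;R)$; this rests on the splitting $E_m\cong L^{\oplus(m+1)}$, the identity $c_{m+1}(L^{\oplus(m+1)})=c_1(L)^{m+1}$, and the computation of $c_1(L)$ via the classifying map of the cover. One also uses the standard facts, recalled at the outset, that the orbit map is a principal bundle and that the orbit space has finite cohomological dimension.
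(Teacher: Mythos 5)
The paper does not prove this proposition at all --- it is quoted from Bredon's book --- so there is no in-text argument to compare against; judged on its own, your proof is correct and complete. The Gysin-periodicity scheme you run is essentially the classical one: for $p=2$ it is literally the Gysin sequence of the double cover (equivalently the Smith sequence of a free involution), for $G=\mathbb{S}^1$ it is the Gysin sequence of the principal circle bundle, and for odd $p$ your device of replacing the structure group $\mathbb{Z}_p$ by the sphere bundle of $L^{\oplus(m+1)}$ with Euler class $c_1(L)^{m+1}$ is a clean substitute for the pair of interleaved Smith--Gysin sequences that Bredon uses; it has the advantage of treating $\mathbb{Z}_p$ and $\mathbb{S}^1$ uniformly. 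The two points that genuinely carry the weight are exactly the ones you flag and justify: (i) $H^*(X/G;R)$ vanishes in all sufficiently large degrees, which follows because $X/G$ is a compact space of finite covering dimension (for finite $G$ the orbit map is closed and finite-to-one, for $G=\mathbb{S}^1$ it is a locally trivial bundle, and \v{C}ech cohomology of a paracompact space vanishes above its dimension --- this matters since the paper works with \v{C}ech cohomology); and (ii) the bound $H^j(W_m;R)=0$ for $j>2m+1+n$, where you correctly use the projection to the $(2m+1)$-dimensional base $\mathbb{S}^{2m+1}/G$ so that the Serre spectral sequence estimate needs no hypothesis on local coefficients beyond the dimension of the base and the vanishing of $H^{>n}(X)$. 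I see no gap.
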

	\noindent A similar result also holds for  $G=\mathbb{S}^3$ actions on $X$ with coefficients in  $\mathbb{Q},$ if the associated sphere bundle $G \hookrightarrow X \rightarrow X/G$ is orientable \cite{kaur}.
	
	\begin{proposition}(\cite{tom})\label{tom}
		Let $G$ be a compact Lie group act freely on a paracompact space $X.$ Then the space $X_G$ is homotopy equivalent to the orbit space $X/G.$
	\end{proposition}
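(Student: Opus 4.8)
The plan is to exhibit the map $X_G\to X/G$ induced by the projection $X\times E_G\to X$ as a fibre bundle whose fibre is the contractible space $E_G$, and then to invoke the standard fact that such a bundle is a homotopy equivalence. Since $G$ acts freely on $E_G$ (the defining property of the universal bundle) and freely on $X$ (by hypothesis), the diagonal $G$-action on $X\times E_G$ is free, and the $G$-equivariant projection $p\colon X\times E_G\to X$ descends to a map $\bar p\colon X_G=(X\times E_G)/G\to X/G$ of orbit spaces. It suffices to show that $\bar p$ is a homotopy equivalence.

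Next I would use that $G$ is a compact Lie group acting freely on the paracompact Hausdorff space $X$: by the slice theorem, the orbit map $\pi\colon X\to X/G$ is a locally trivial principal $G$-bundle, and $X/G$ is again paracompact, so a trivialising open cover $\{U_i\}$ of $X/G$ can be chosen numerable. Over each $U_i$ there is a $G$-homeomorphism $\pi^{-1}(U_i)\cong U_i\times G$, whence
\[
\bar p^{-1}(U_i)=\bigl(\pi^{-1}(U_i)\times E_G\bigr)/G\;\cong\;\bigl(U_i\times G\times E_G\bigr)/G\;\cong\;U_i\times E_G,
\]
and under this identification $\bar p$ restricts to the first projection $U_i\times E_G\to U_i$. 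Hence $\bar p$ is a numerable fibre bundle with fibre $E_G$.

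Finally, since $E_G$ is contractible, each restriction $U_i\times E_G\to U_i$ is a homotopy equivalence; a numerable fibre bundle is a Hurewicz fibration, so both $\bar p$ and $\mathrm{id}_{X/G}$ are fibrations over $X/G$, and by Dold's theorem (a fibrewise map of fibrations which is a homotopy equivalence over each member of a numerable cover is a fibrewise homotopy equivalence) $\bar p$ is a homotopy equivalence. In the situation actually needed in this paper, where $X$ is a finite CW-complex, one can argue more cheaply: the homotopy exact sequence of the fibration $E_G\hookrightarrow X_G\xrightarrow{\bar p}X/G$ together with contractibility of $E_G$ shows $\bar p$ induces isomorphisms on all homotopy groups, and Whitehead's theorem completes the argument. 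The only genuinely nontrivial inputs are the slice theorem, giving local triviality of $\pi$ over a merely paracompact base, and Dold's partition-of-unity technique used to globalise the fibrewise equivalence; everything else is routine.
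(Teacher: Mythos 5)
Your argument is correct and is precisely the standard proof behind this statement: the paper offers no proof of its own, merely citing tom Dieck, and the argument in that reference is exactly your route --- local triviality of $X\to X/G$ from the slice theorem, the induced numerable $E_G$-bundle $X_G\to X/G$, and Dold's theorem to globalise the fibrewise contraction. The only caution is with your ``cheaper'' aside: Whitehead's theorem requires knowing that $X_G$ as well as $X/G$ has the homotopy type of a CW-complex, which is not automatic and is exactly what the Dold argument lets you avoid.
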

	\begin{proposition} (\cite{allen})
		Let $\mathbb{S}^{n-1} \hookrightarrow E \stackrel{\pi}\rightarrow B$ be an oriented $(n-1)$-sphere bundle. Then we have a long exact sequence
		$$\cdots H^i(B) \rightarrow H^i(E) \rightarrow H^{i-n+1}(B) \stackrel{\cup }\rightarrow H^{i+1}(B) \cdots$$ which begin with  $$ 0 \rightarrow H^{n-1}(B) \rightarrow H^{n-1}(E) \rightarrow H^0(B) \rightarrow H^n(B) \cdots$$ 
		where $\cup :H^i(B) \rightarrow H^{i+n}(B)$ maps $z \mapsto z\cup u,$ $u \in H^{n}(B)$ denotes the characterstic class of the sphere bundle. This sequence is called the \it{Gysin sequence}.
	\end{proposition}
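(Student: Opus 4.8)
The natural approach is to extract the sequence from the Leray--Serre spectral sequence of the fibration $\mathbb{S}^{n-1}\hookrightarrow E\xrightarrow{\pi} B$, following the classical two-row computation. Since the sphere bundle is oriented, $\pi_1(B)$ acts trivially on $H^*(\mathbb{S}^{n-1})$, so the system of local coefficients is simple and $E_2^{k,i}=H^k(B;H^i(\mathbb{S}^{n-1}))$. Because $H^i(\mathbb{S}^{n-1})$ is a copy of the coefficient ring for $i=0$ and $i=n-1$ and vanishes otherwise, the $E_2$-page is concentrated in the two rows $i=0$ and $i=n-1$, with $E_2^{k,0}\cong E_2^{k,n-1}\cong H^k(B)$. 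It follows that $E_2=E_3=\cdots=E_n$, that the only possibly nonzero differential is $d_n\colon E_n^{k,n-1}\to E_n^{k+n,0}$, and that $E_{n+1}=E_\infty$.

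The crucial point is the identification of $d_n$. By multiplicativity of the spectral sequence, the $(n-1)$-row $E_n^{*,n-1}$ is a free rank-one module over $E_n^{*,0}=H^*(B)$ generated by the fundamental class $\iota\in H^{n-1}(\mathbb{S}^{n-1})$, so $d_n$ is determined by $d_n(\iota)\in E_n^{n,0}=H^n(B)$; this transgressed element is, by definition, the characteristic (Euler) class $u$ of the bundle, and therefore $d_n(z\otimes\iota)=z\cup u$ for $z\in H^*(B)$. Consequently
$$E_\infty^{k,0}=\operatorname{coker}\bigl(\cup u\colon H^{k-n}(B)\to H^k(B)\bigr),\qquad E_\infty^{k,n-1}=\ker\bigl(\cup u\colon H^{k}(B)\to H^{k+n}(B)\bigr).$$

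Because only the rows $i=0$ and $i=n-1$ survive to $E_\infty$, the decreasing filtration of $H^i(E)$ has at most two nonzero quotients, giving short exact sequences
$$0\longrightarrow E_\infty^{i,0}\longrightarrow H^i(E)\longrightarrow E_\infty^{i-n+1,n-1}\longrightarrow 0$$
for every $i$, in which the left-hand map is the edge homomorphism, namely $\pi^*$. Substituting the formulas above for $E_\infty$ and splicing these short exact sequences along the maps $\cup u$ yields the asserted long exact sequence
$$\cdots\to H^{i-n}(B)\xrightarrow{\ \cup u\ }H^i(B)\xrightarrow{\ \pi^*\ }H^i(E)\to H^{i-n+1}(B)\xrightarrow{\ \cup u\ }H^{i+1}(B)\to\cdots.$$
Exactness at $H^i(B)$ holds since $\ker\pi^*=u\cup H^{i-n}(B)$; at $H^i(E)$ since $\operatorname{im}\pi^*=E_\infty^{i,0}$ is exactly the kernel of the surjection onto $E_\infty^{i-n+1,n-1}$; and at $H^{i-n+1}(B)$ since the image of $H^i(E)$ there equals $E_\infty^{i-n+1,n-1}=\ker(\cup u)$. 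For $i<n$ the term $H^{i-n}(B)$ vanishes, which produces the initial segment $0\to H^{n-1}(B)\to H^{n-1}(E)\to H^0(B)\xrightarrow{\cup u}H^n(B)\to\cdots$.

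I expect the only genuine subtlety to be the identification of the differential $d_n$ with cup product by $u$ — that is, confirming that the transgression of the fiber's fundamental class is precisely the characteristic class named in the statement. Since the proposition adopts this transgression as the definition of $u$, and multiplicativity forces $d_n$ to be cup product with $u$ on the entire $(n-1)$-row, the remaining work is the routine bookkeeping of assembling the two-column filtration into a long exact sequence and checking exactness at each term.
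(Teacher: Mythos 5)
Your derivation is correct. Note, however, that the paper offers no proof of this proposition at all: it is quoted as a preliminary with a citation to Hatcher, so there is no "paper's argument" to match. Your route --- the two-row Leray--Serre spectral sequence, with orientability giving simple coefficients, multiplicativity forcing $d_n(z\otimes\iota)=\pm\,z\cup u$ where $u$ is the transgression of the fiber class, and the two-step filtration splicing into the long exact sequence --- is the standard spectral-sequence derivation and is exactly what the paper's framework (Propositions 2.1--2.3) sets up. It is worth knowing that the cited source's primary derivation is different and more elementary: Hatcher obtains the Gysin sequence from the Thom isomorphism applied to the long exact sequence of the pair (disk bundle, sphere bundle), which avoids spectral sequences entirely but requires constructing the Thom class; the spectral-sequence argument you give trades that construction for the (definitional, as you correctly flag) identification of the Euler class with the transgression $d_n(\iota)$. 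Both are complete proofs; yours has the advantage of sitting inside the machinery the paper already uses throughout Section 4. The only cosmetic point is the Koszul sign in $d_n(z\cdot\iota)=(-1)^{|z|}z\cdot d_n(\iota)$, which you suppress but which does not affect kernels, cokernels, or exactness.
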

	Recall that a space $X$ is said to be  totally nonhomologous to zero (TNHZ) in $X_G$ if the inclusion map $X \hookrightarrow X_G$ induces
	a surjective homomorphism $H^*(X_ G ; R) \rightarrow H^*(X;R),$ where $R=\mathbb{Z}_p,~p$ a prime or $\mathbb{Q}.$  If the fixed point set $F$ of an action of $G$ on $X$ is nonempty then  the following are well known results:
	\begin{proposition}(\cite{bredon})\label{ pro 1}
		Let $G= \mathbb{Z}_p,~ p$ a prime, act on a finite CW-complex $X.$ Then, \begin{center}
			$ \sum_{i \geq j}$ rk $H^{i}(F;\mathbb{Z}_p) \leq \sum_{i \geq j} $ rk $H^{i}(X;\mathbb{Z}_p),$ for each $j \geq 0.$
			
		\end{center} 
	\end{proposition}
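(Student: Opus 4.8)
The plan is to analyze the Borel construction $X_G=(X\times E_G)/G$ with $\mathbb{Z}_p$-coefficients and to compare, in large cohomological degree, the Leray--Serre spectral sequence of the Borel fibration $X\hookrightarrow X_G\stackrel{\pi}\rightarrow B_G$ of $X$ with that of the fibration $F_G\to B_G$. The point is that, since $G$ acts trivially on $F$, the latter is the \emph{trivial} fibration: $F_G\cong F\times B_G$, so $H^*(F_G;\mathbb{Z}_p)\cong H^*(F;\mathbb{Z}_p)\otimes H^*(B_G;\mathbb{Z}_p)$ and its spectral sequence collapses at $E_2$. Here $B_G=B_{\mathbb{Z}_p}$, and $H^*(B_{\mathbb{Z}_p};\mathbb{Z}_p)$ is a polynomial ring $\mathbb{Z}_p[t]$ on one generator when $p=2$, and an exterior algebra on a degree-$1$ class tensored with $\mathbb{Z}_p[t]$, $|t|=2$, when $p$ is odd; in particular it is Noetherian, it has rank $1$ in every non-negative degree, and, $X$ being a finite CW-complex, $H_G^*(X;\mathbb{Z}_p)$ is a finitely generated module over it. Throughout I write $\operatorname{rk}$ for $\dim_{\mathbb{Z}_p}$.

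\emph{Step 1 (a uniform upper bound from the spectral sequence of $X$).} Let $E_r^{k,i}$ denote the Leray--Serre spectral sequence, with $\mathbb{Z}_p$-coefficients, of $X\hookrightarrow X_G\to B_G$. If $\pi_1(B_G)$ acts trivially on $H^*(X)$ then $E_2^{k,i}=H^k(B_G;\mathbb{Z}_p)\otimes H^i(X)$ has rank $\operatorname{rk}H^i(X)$; if it acts nontrivially, Proposition~\ref{re} exhibits each $E_2^{k,i}$ with $k>0$ as a subquotient of $H^i(X)$ (one of $\ker\tau$, $\ker\tau/\operatorname{im}\sigma$, $\ker\sigma/\operatorname{im}\tau$), and $E_2^{0,i}=\ker\tau\subseteq H^i(X)$. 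Hence in all cases $\operatorname{rk}E_2^{k,i}\le\operatorname{rk}H^i(X)$ for every $k,i$. Since $E_{r+1}^{k,i}$ is a subquotient of $E_r^{k,i}$, the same bound holds for every page and for $E_\infty^{k,i}$. Summing along a fixed anti-diagonal $k+i=n$, for every $n\ge 0$ and every $j\ge 0$ we get
\[
\sum_{i\ge j}\operatorname{rk}E_\infty^{n-i,i}\ \le\ \sum_{i\ge j}\operatorname{rk}E_2^{n-i,i}\ \le\ \sum_{i\ge j}\operatorname{rk}H^i(X;\mathbb{Z}_p).
\]

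\emph{Step 2 (the high-degree limit terms of $X$ compute the Betti numbers of $F$).} The class $t\in H^{|t|}(B_G)$ is a permanent cycle, so multiplication by $t$ acts on every page commuting with differentials; since $H^k(B_G;\mathbb{Z}_p)\to H^{k+|t|}(B_G;\mathbb{Z}_p)$, $z\mapsto zt$, is an isomorphism for all $k\ge 0$, an induction on $r$ shows $E_r^{k,i}\to E_r^{k+|t|,i}$ is an isomorphism once $k$ is large, so $\operatorname{rk}E_\infty^{k,i}(X)$ is eventually constant in $k$. Now invoke the Borel--Smith localization theorem (valid since $X$ is a finite, hence finitistic, complex): the restriction $H_G^*(X;\mathbb{Z}_p)\to H_G^*(F;\mathbb{Z}_p)$ becomes an isomorphism after inverting $t$; because $H_G^*(X;\mathbb{Z}_p)$ is finitely generated over the Noetherian ring $H^*(B_G;\mathbb{Z}_p)$, its $t$-torsion is finite-dimensional, so this restriction is an isomorphism in all sufficiently large degrees, and — compatibly with the $t$-periodicity above, which forces strictness of the induced map of filtered groups in high degrees — it induces isomorphisms $E_\infty^{k,i}(X)\xrightarrow{\ \cong\ }E_\infty^{k,i}(F)$ for $k$ large. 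Since $F_G\to B_G$ is trivial, $E_\infty^{k,i}(F)=H^k(B_G;\mathbb{Z}_p)\otimes H^i(F)$ has rank $\operatorname{rk}H^i(F)$ (and is $0$ for $i>\dim F$). Therefore, choosing $n$ large enough that $n-i$ is large for every $i\le\dim X$,
\[
\sum_{i\ge j}\operatorname{rk}E_\infty^{n-i,i}(X)\ =\ \sum_{i\ge j}\operatorname{rk}H^i(F;\mathbb{Z}_p),
\]
and combining with Step 1 yields $\sum_{i\ge j}\operatorname{rk}H^i(F;\mathbb{Z}_p)\le\sum_{i\ge j}\operatorname{rk}H^i(X;\mathbb{Z}_p)$ for every $j\ge 0$, as required.

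The crux, and the main obstacle, is Step 2: passing from the ``after inverting $t$'' form of the localization theorem to a \emph{degreewise} identification of the limit terms in high filtration degree. One needs the Noetherian property of $H_G^*(X;\mathbb{Z}_p)$ over $H^*(B_G;\mathbb{Z}_p)$ (so that the $t$-torsion is bounded) together with strictness of the comparison map of filtered modules in high degrees; these are exactly the Smith-theoretic estimates carried out in Bredon's treatment \cite{bredon}, and once they are granted the remaining steps are only bookkeeping with ranks of subquotients. An alternative that avoids localization entirely is to run the classical Smith--Gysin exact sequences relating $H^*(X;\mathbb{Z}_p)$, $H^*(F;\mathbb{Z}_p)$ and the auxiliary Smith special cohomologies $H_\sigma^*(X)$, $H_\tau^*(X)$, and to induct on $j$; this is longer but entirely elementary.
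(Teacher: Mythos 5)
The paper offers no proof of this proposition at all --- it is quoted from Bredon's book \cite{bredon} as a known preliminary --- so the only meaningful comparison is with the source. Your argument is the standard Borel-construction/localization proof and is essentially sound: Step 1 is correct (each $E_2^{k,i}$, whether $H^k(B_G)\otimes H^i(X)$ or one of the subquotients $\ker\tau$, $\ker\tau/\operatorname{im}\sigma$, $\ker\sigma/\operatorname{im}\tau$ of Proposition~\ref{re}, has rank at most $\operatorname{rk}H^i(X)$, and this persists to $E_\infty$), and the high-degree isomorphism $j^*:H^k(X_G)\to H^k(F_G)$ is exactly the consequence of localization that the paper itself invokes in Section 3. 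Bredon's own proof of this inequality is the elementary Smith-sequence induction you sketch in your closing remark; your route is the spectral-sequence argument he gives later in his book. The one genuine soft spot is the assertion in Step 2 that $E_\infty^{k,i}(X)\cong E_\infty^{k,i}(F)$ termwise for $k$ large: ``$t$-periodicity forces strictness'' is not an argument as it stands. But you do not need termwise identification. Since $F_G\to X_G$ is a map of fibrations over $B_G$, the map $j^*:H^n_G(X)\to H^n_G(F)$ preserves the Serre filtrations, and for $n$ large it is injective; hence $\dim F^kH^n_G(X)\le\dim F^kH^n_G(F)$, that is,
\[
\sum_{i\le n-k}\operatorname{rk}E_\infty^{n-i,i}(X)\ \le\ \sum_{i\le n-k}\operatorname{rk}H^i(F;\mathbb{Z}_p),
\]
and subtracting this from the equality of total ranks $\sum_i\operatorname{rk}E_\infty^{n-i,i}(X)=\dim H^n_G(X)=\dim H^n_G(F)=\sum_i\operatorname{rk}H^i(F;\mathbb{Z}_p)$ yields $\sum_{i\ge j}\operatorname{rk}E_\infty^{n-i,i}(X)\ge\sum_{i\ge j}\operatorname{rk}H^i(F;\mathbb{Z}_p)$ for every $j\ge 0$ (take $k=n-j+1$ with $n$ large). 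Combined with your Step 1 this finishes the proof with no strictness claim required.
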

	
	\begin{proposition}(\cite{bredon}) \label{pro4}
		Let $G=\mathbb{Z}_p,~ p$ a prime, act on a finite CW-complex $X.$ Suppose $\sum$ rk $H^i(X;\mathbb{Z}_p)< \infty.$ Then the following statements are equivalent: 
		\begin{enumerate}
			\item $X$ is TNHZ (mod $p$) in $X_G.$  
			\item $\sum$ rk $H^i(F;\mathbb{Z}_p) =\sum$ rk $H^i(X;\mathbb{Z}_p).$ 
			\item $G$ acts trivially on $H^*(X; \mathbb{Z}_p)$ and spectral sequence $E^{k,i}_2$ of $X_G \rightarrow B_G$ degenerates.
		\end{enumerate} 
	\end{proposition}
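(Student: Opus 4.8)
The plan is to establish the equivalences $(1)\Leftrightarrow(3)$ and $(1)\Leftrightarrow(2)$ separately, the first from the Leray--Serre spectral sequence of the Borel fibration $X\hookrightarrow X_G\to B_G$ and its edge homomorphisms, the second from the Localization Theorem for $\mathbb{Z}_p$-actions together with a comparison of ranks over the polynomial subalgebra $\mathbb{Z}_p[t]\subseteq H^*(B_G;\mathbb{Z}_p)$ (with $\deg t=1$ if $p=2$ and $\deg t=2$ if $p$ is odd); throughout, cohomology is with $\mathbb{Z}_p$-coefficients and $F=X^G\neq\emptyset$. For $(1)\Leftrightarrow(3)$ I would observe first that every nonzero differential of the spectral sequence meeting the fibre edge $E_r^{0,\ast}$ emanates from it, so $E_\infty^{0,i}\subseteq\cdots\subseteq E_2^{0,i}\subseteq H^i(X)$, and by the edge-homomorphism description above the restriction $i^*\colon H^i(X_G)\to H^i(X)$ factors as $H^i(X_G)\twoheadrightarrow E_\infty^{0,i}\hookrightarrow H^i(X)$. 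Hence $X$ is TNHZ in $X_G$ iff $E_\infty^{0,i}=H^i(X)$ for every $i$, which is equivalent to requiring both that $E_2^{0,i}=H^i(X)$ for every $i$ and that every differential leaving the fibre edge vanishes. The first requirement holds precisely when $\pi_1(B_G)$ acts trivially on $H^*(X)$: if the action is trivial then $E_2^{0,i}=H^0(B_G;H^i(X))=H^i(X)$, whereas for a nontrivial action $E_2^{0,i}=\ker(1-g^*)\subsetneq H^i(X)$ for some $i$ by Proposition~\ref{re}. And once $i^*$ is surjective, lifting a $\mathbb{Z}_p$-basis of $H^*(X)$ through $i^*$ and using the multiplicative structure of the fibration (a Leray--Hirsch argument) shows $H^*(X_G)$ to be a free $H^*(B_G)$-module on that basis, which by a dimension count forces $E_2=E_\infty$. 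The converse $(3)\Rightarrow(1)$ is then immediate, since trivial action gives $E_2^{0,i}=H^i(X)$ and degeneration gives $E_\infty^{0,i}=E_2^{0,i}$, so $i^*$ is onto.

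For $(1)\Rightarrow(2)$, let $S$ be the multiplicative set of powers of $t$. The Localization Theorem for $\mathbb{Z}_p$-actions gives an isomorphism $S^{-1}H^*(X_G)\cong S^{-1}H^*(F_G)$ of $S^{-1}H^*(B_G)$-modules, and since $G$ acts trivially on $F$ we have $H^*(F_G)=H^*(F)\otimes_{\mathbb{Z}_p}H^*(B_G)$; thus $S^{-1}H^*(X_G)$ is a free $S^{-1}H^*(B_G)$-module of rank $\sum_i\mathrm{rk}\,H^i(F)$, and this holds for any $\mathbb{Z}_p$-action on $X$ with $F\neq\emptyset$. If in addition $X$ is TNHZ, then by $(1)\Leftrightarrow(3)$ one has $H^*(X_G)\cong H^*(B_G)\otimes_{\mathbb{Z}_p}H^*(X)$, so $S^{-1}H^*(X_G)$ is also free over $S^{-1}H^*(B_G)$ of rank $\sum_i\mathrm{rk}\,H^i(X)$; uniqueness of the rank of a free module yields $\sum_i\mathrm{rk}\,H^i(F)=\sum_i\mathrm{rk}\,H^i(X)$.

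For $(2)\Rightarrow(1)$, Proposition~\ref{ pro 1} with $j=0$ already gives $\sum_i\mathrm{rk}\,H^i(F)\le\sum_i\mathrm{rk}\,H^i(X)$, so it suffices to show that equality forces $X$ to be TNHZ. View everything as modules over the PID $\mathbb{Z}_p[t]$, over which $H^*(B_G)$ is free of some fixed rank $\varepsilon$ and $H^*(X_G)$ is finitely generated; the localization statement of the previous paragraph then reads: the $\mathbb{Z}_p[t]$-rank of $H^*(X_G)$ equals $\varepsilon\sum_i\mathrm{rk}\,H^i(F)$. On the other hand, the spectral sequence presents $H^*(X_G)$ as a finite iterated extension whose associated graded is $\bigoplus_i E_\infty^{\ast,i}$, with each $E_\infty^{\ast,i}$ a subquotient of $E_2^{\ast,i}=H^\ast(B_G;H^i(X))$; computing the $\mathbb{Z}_p[t]$-rank of the latter summand by summand over $\mathbb{Z}_p[\mathbb{Z}_p]$ --- each non-projective indecomposable summand of $H^i(X)$ contributing $\varepsilon$ and each free summand contributing $0$ --- bounds the $\mathbb{Z}_p[t]$-rank of $H^*(X_G)$ above by $\varepsilon\sum_i\mathrm{rk}\,H^i(X)$, with equality only if $G$ acts trivially on every $H^i(X)$. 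The hypothesis forces every inequality in this chain to be an equality: so $G$ acts trivially, whence each $E_2^{\ast,i}$ is a free $\mathbb{Z}_p[t]$-module, and since the $d_r$ are $\mathbb{Z}_p[t]$-linear, any nonzero differential issuing from a free page would strictly decrease the total $\mathbb{Z}_p[t]$-rank of all later pages --- impossible --- so $E_2=E_\infty$. By $(1)\Leftrightarrow(3)$, $X$ is TNHZ.

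The step I expect to be the main obstacle is the rank bookkeeping in the last paragraph: one must confirm that the $\mathbb{Z}_p[t]$-rank of $H^*(X_G)$ is faithfully read off from the $E_\infty$-page --- in particular that the differentials $d_r$ cannot shuffle rank among the columns $E_r^{\ast,i}$ while preserving the total --- and that the exterior generator present in $H^*(B_G;\mathbb{Z}_p)$ for odd $p$ causes no trouble, which it does not, because passing to the subalgebra $\mathbb{Z}_p[t]$ merely rescales both sides of the count by the constant $\varepsilon$ and localization at $t$ remains injective on torsion-free $\mathbb{Z}_p[t]$-modules. The other ingredients --- the edge-homomorphism factorisation, the Leray--Hirsch argument, and the Localization Theorem --- are all standard.
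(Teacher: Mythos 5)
The paper offers no proof of this proposition---it is quoted verbatim from Bredon's book---so there is no in-text argument to measure yours against; judged on its own, your proof is correct and is the standard modern one. The $(1)\Leftrightarrow(3)$ step via the edge homomorphism and the identification $E_2^{0,i}=\ker(1-g^*)$ is right, and the degeneration can even be had without quoting Leray--Hirsch as a black box: once every class in $E_2^{0,i}=H^i(X)$ is a permanent cocycle, $E_2^{k,i}=E_2^{k,0}\cdot E_2^{0,i}$ is generated by products of permanent cocycles and each $d_r$ is a derivation killing the base row, so all differentials vanish. Your rank bookkeeping for $(2)\Leftrightarrow(1)$ also holds up: $\mathrm{rk}_{\mathbb{Z}_p[t]}$ is additive over the finite filtration of $H^*(X_G)$; the first nonvanishing differential acts on a page equal to the free module $E_2$, so its image is a nonzero submodule of a torsion-free module and the total rank strictly drops; and $\mathrm{rk}_{\mathbb{Z}_p[t]}H^*(B_G;M)=\varepsilon\cdot\#\{\text{non-projective indecomposable summands of }M\}\le\varepsilon\dim_{\mathbb{Z}_p}M,$ with equality iff $G$ acts trivially on $M$, because the unique one-dimensional $\mathbb{Z}_p[\mathbb{Z}_p]$-module is the trivial one. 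Two caveats worth recording: the equivalence with $(2)$ needs $F\neq\emptyset$ (the paper imposes this immediately before the statement; otherwise $(2)$ would force $H^*(X;\mathbb{Z}_p)=0$), and the Localization Theorem you invoke is a substantial import---Bredon's own route to $(1)\Leftrightarrow(2)$ in the cited source runs instead through the torsion statement for $H^*_G(X,F)$ together with the inequalities $\sum_{i\ge j}\mathrm{rk}\,H^i(F)\le\sum_{i\ge j}\mathrm{rk}\,H^i(X)$ recorded earlier in the paper, which packages essentially the same information; your version is shorter at the price of a bigger black box.
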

	\begin{proposition}(\cite{bredon}) \label{floyd}
		Let $G=\mathbb{Z}_p,~p$ a prime, act on  a finite CW-complex $X.$ If $\sum$ rk $H^i(X;\mathbb{Z}_p)< \infty$ then by the Floyd's formula we have  $\chi(X)\equiv\chi(F)(mod~p),$ where the Euler characteristic is defined in terms of the mod$~ p$ \v{C}ech cohomology. 
	\end{proposition}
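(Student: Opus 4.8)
This is Floyd's classical congruence, and the plan is to deduce it from two standard properties of the \v{C}ech Euler characteristic: additivity over a closed--open decomposition, and multiplicativity under finite covering maps. Write $\chi$ for the \v{C}ech Euler characteristic and $\chi_c$ for its compactly supported version; they agree on a compact space, and all cohomology is taken with coefficients in $\mathbb Z_p$.

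First I would record the stratification of the action. Since $G=\mathbb Z_p$ has no proper nontrivial subgroup, the isotropy group of each point of $X$ is either trivial or all of $G$; hence $G$ acts freely on the open invariant subset $X\setminus F$, and the orbit map $\pi\colon X\setminus F\to (X\setminus F)/G$ is a $p$-sheeted covering. Moreover $X$, being a finite CW-complex, is compact, so $F$ is compact, and by Smith theory --- which under the hypothesis $\sum\mathrm{rk}\,H^i(X;\mathbb Z_p)<\infty$ is, taking $j=0$, exactly the content of Proposition \ref{ pro 1} --- the group $\check H^*(F;\mathbb Z_p)$ is finite dimensional; the long exact sequence in compactly supported cohomology relating $X\setminus F$, $X$ and $F$ then shows $H^*_c(X\setminus F;\mathbb Z_p)$ is finite dimensional as well, so every Euler characteristic below is well defined.

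The proof then rests on two identities. From the long exact sequence $\cdots\to H^n_c(X\setminus F)\to H^n_c(X)\to H^n_c(F)\to H^{n+1}_c(X\setminus F)\to\cdots$ relating the compactly supported cohomology of the open set $X\setminus F$, the compact space $X$, and the closed set $F$ (on compact spaces $H^*_c$ is ordinary \v{C}ech cohomology), taking alternating sums of ranks gives the additivity relation $\chi(X)=\chi(F)+\chi_c(X\setminus F)$. From the $p$-sheeted covering $\pi$, multiplicativity of the compactly supported Euler characteristic under finite covers gives $\chi_c(X\setminus F)=p\,\chi_c\big((X\setminus F)/G\big)$. Combining the two, $\chi(X)-\chi(F)=p\,\chi_c\big((X\setminus F)/G\big)\equiv 0\pmod p$, which is the asserted formula.

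I expect the only real obstacle to be point-set bookkeeping: checking that all the relevant \v{C}ech cohomology groups are finitely generated so that the Euler characteristics are meaningful, and that multiplicativity of $\chi_c$ under finite covers holds at this level of generality (if need be, passing to $\mathbb Q$-coefficients for the covering step and matching Euler characteristics back to $\mathbb Z_p$). All of this is routine once Smith theory supplies finiteness of $\check H^*(F;\mathbb Z_p)$. A transparent special case, which also motivates the general statement, is when $X$ is a finite simplicial complex with a simplicial $G$-action: after an equivariant subdivision the simplices not contained in $F$ split into free $G$-orbits of size $p$, so the face numbers satisfy $f_i(X)\equiv f_i(F)\pmod p$ for every $i$, and summing with alternating signs gives $\chi(X)\equiv\chi(F)\pmod p$ at once.
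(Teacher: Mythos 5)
The paper does not prove this proposition; it is quoted verbatim from Bredon's book, whose argument runs through the Smith special cohomology sequences with $\mathbb{Z}_p$ coefficients throughout. Your route --- additivity of the compactly supported Euler characteristic over the decomposition $X=F\sqcup(X\setminus F)$ plus multiplicativity under the $p$-fold cover $X\setminus F\to(X\setminus F)/G$ --- is therefore a genuinely different packaging, and the additivity half is fine: the long exact sequence for $H^*_c$ and the Smith-theoretic finiteness of $H^*(F;\mathbb{Z}_p)$ do give $\chi(X)=\chi(F)+\chi_c(X\setminus F)$ with everything finite.

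The gap is the multiplicativity step, which is exactly where the content of the theorem sits. For a $p$-sheeted cover the transfer argument proves $\chi_c(E)=p\,\chi_c(B)$ only when $p$ is invertible in the coefficients, so it is unavailable with $\mathbb{Z}_p$ coefficients; and your proposed fallback --- do the covering step over $\mathbb{Q}$ and ``match back'' to $\mathbb{Z}_p$ --- fails because $F$ is merely a compact subset whose \v{C}ech cohomology is controlled by Smith theory only mod $p$: there is no reason for $\chi(F;\mathbb{Q})$ to equal $\chi(F;\mathbb{Z}_p)$ (no universal-coefficient comparison is available without integral finiteness), so a congruence proved for rational Euler characteristics does not transfer to the mod $p$ ones in the statement. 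The correct repair is to compare $H^*_c(X\setminus F;\mathbb{Z}_p)$ with $H^*_c((X\setminus F)/G;\pi_!\mathbb{Z}_p)$ and filter the sheaf $\pi_!\mathbb{Z}_p\cong\mathbb{Z}_p[G]$ by powers of $(g-1)$, whose $p$ successive quotients are constant sheaves $\mathbb{Z}_p$ --- but this is precisely the Smith filtration, and one must additionally establish finite-dimensionality of $H^*_c((X\setminus F)/G;\mathbb{Z}_p)$, which does not follow formally from finiteness upstairs. Your closing ``transparent special case'' also assumes the action is simplicial, which a topological $\mathbb{Z}_p$-action on a finite CW-complex need not be (this is why the statement is phrased in \v{C}ech cohomology in the first place). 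So the sketch is the right shape but, as written, it presupposes the hard step rather than proving it.
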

	
	\noindent Recall  that 
	$H^*(\mathbb{S}^n \vee \mathbb{S}^m \vee \mathbb{S}^l; R)=R[a,b,c]/<{a^2,b^2,c^2,ab,bc,ac}>,$ where deg $a=n$, deg $b=m$ and deg $c=l,$ where $R=\mathbb{Z}_p,$ $p$ a prime or $\mathbb{Q}.$\\
	\indent Throughout the paper, $H^*(X)$ will denote the \v{C}ech cohomology of a space $X$ and   $X\sim_R Y,$ means $H^*(X; R)\cong H^*(Y;R),$ where  $R = \mathbb{Z}_p , ~p$ a prime or  $\mathbb{Q}.$

	\section{Fixed Point Sets of Wedge of Three Spheres}                               
	
	\noindent Let  $G=\mathbb{Z}_p,$ $p>2$ a prime, act on a space $X$ having mod $p$ cohomology isomorphic to a wedge of three spheres $\mathbb{S}^n \vee \mathbb{S}^m \vee \mathbb{S}^l,$ $1\leq n \leq m \leq l.$ If $X$ is TNHZ in $X_{G}$ then it is easy to see that the fixed point sets  are similar to that of $X \sim_{\mathbb{Z}_p} \mathbb{S}^n \vee \mathbb{S}^{2n} \vee \mathbb{S}^{3n}$ \cite{f, monika}. If $X$ is not TNHZ in $X_G$ and $X \sim_{\mathbb{Z}_p} \mathbb{S}^n \vee \mathbb{S}^{2n} \vee \mathbb{S}^{3n}$ then the possibilities of the fixed point sets are odd dimensional spheres \cite{o}. In this section, we  generalize this case for arbitrary wedge of three spheres. 
	\begin{theorem}
		Let $G=\mathbb{Z}_p$, $p>2$  a prime, act on a finite CW-complex $X \sim_{\mathbb{Z}_p} \mathbb{S}^n \vee \mathbb{S}^m \vee \mathbb{S}^{l}$ , $1 \leq n\leq m \leq l,$ where $X$ is not TNHZ in $X_{G}.$ If the fixed point set $F$ is  nonempty then $F$ must be one of the following:
		\begin{enumerate}
			\item $F \sim_{\mathbb{Z}_p} S^q,$~~$0\leq q \leq l.$
			\item $F \sim_{\mathbb{Z}_p} S^{q} \vee S^{r},$ $p=3$ or $5$, $0 \leq q \leq m$, $0\leq q \leq r <l,$ and either  $n \leq m =l$ or $n=m \leq l.$
			\item $F\sim_{\mathbb{Z}_p} P^2(q),$ $p=3$ or $5,$ $~ q $ even, $q<m,$ and either  $n \leq m =l$ or $n=m \leq l.$
			\item $F \sim_{\mathbb{Z}_p} pt$, $p=3,$ and  $n=m =l.$
		\end{enumerate}
	\end{theorem}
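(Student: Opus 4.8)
The plan is to analyze the Leray--Serre spectral sequence of the Borel fibration $X\stackrel{i}{\hookrightarrow}X_G\stackrel{\pi}{\rightarrow}B_G$ with $\mathbb{Z}_p$-coefficients, and to combine it with the Smith inequality (Proposition~\ref{ pro 1}), Floyd's congruence (Proposition~\ref{floyd}) and the Borel localization theorem (\cite{bredon}) to read off $H^*(F;\mathbb{Z}_p)$.

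The first step is to determine the $g^*$-action on $H^*(X;\mathbb{Z}_p)=\mathbb{Z}_p[a,b,c]/\langle a^2,b^2,c^2,ab,bc,ac\rangle$. Since $g$ has order $p$ and $\mathbb{Z}_p^{\times}$ has order $p-1$, $g^*$ fixes every one--dimensional graded piece; together with the vanishing of all positive--degree products this makes $g^*$ a unipotent transformation of the span of $a,b,c$, which can be nontrivial only when two of $n,m,l$ coincide. So I split into the cases $n<m<l$, $n=m<l$, $n<m=l$, $n=m=l$, and in each into the subcases in which $\pi_1(B_G)$ acts trivially on $H^*(X)$ (simple coefficients, $E_2^{k,i}=H^k(B_G;\mathbb{Z}_p)\otimes H^i(X;\mathbb{Z}_p)$) or nontrivially (Proposition~\ref{re}, with $E_2$ computed from $\ker\tau$, $\ker\sigma$, $\operatorname{im}\sigma$, $\operatorname{im}\tau$). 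Since $H^*(X)$ is concentrated in degrees $0\le n\le m\le l$, the only possibly nonzero differentials are the transgressions $d_{n+1}(a)$, $d_{m+1}(b)$, $d_{l+1}(c)$ and, when the relevant strict inequality holds, the off--diagonal differentials $d_{m-n+1}(b)\in H^{m-n+1}(B_G)\cdot a$, $d_{l-n+1}(c)\in H^{l-n+1}(B_G)\cdot a$ and $d_{l-m+1}(c)\in H^{l-m+1}(B_G)\cdot b$; by Proposition~\ref{pro4}, $X$ not being TNHZ forces $g^*$ to be nontrivial or at least one of these differentials to be nonzero.

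The next step is, in each configuration, to list the admissible patterns of nonzero differentials -- constrained by multiplicativity and by the requirement (forced by $F\ne\emptyset$) that no transgression image be invertible in $H^*(B_G;\mathbb{Z}_p)$, so that the unit survives -- then to compute $E_\infty$ and hence the graded ring $H^*(X_G;\mathbb{Z}_p)$ in the relevant range, and finally to invert the polynomial generator $t\in H^2(B_G;\mathbb{Z}_p)$. Because $G$ acts trivially on $F$, localization gives $S^{-1}H^*(X_G;\mathbb{Z}_p)\cong H^*(F;\mathbb{Z}_p)\otimes S^{-1}H^*(B_G;\mathbb{Z}_p)$, so $H^*(F;\mathbb{Z}_p)$ is recovered as a graded $\mathbb{Z}_p$-algebra of total rank at most three. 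The listed constraints then fall out: the Smith inequality gives $H^i(F)=0$ for $i>l$ and total rank $<4$ (rank $4$ would force $X$ TNHZ by Proposition~\ref{pro4}), hence the vanishing of the top sphere and the bounds $q\le m$, $r<l$ in (2)--(3); the off--diagonal differential patterns that leave a rank--three (respectively rank--one) localized cohomology are available only when $n\le m=l$ or $n=m\le l$ (respectively $n=m=l$); Floyd's congruence $\chi(X)\equiv\chi(F)\pmod{p}$, with $\chi(X)\in\{4,2,0,-2\}$ according to how many of $n,m,l$ are odd and with $\chi(F)=1$ (point), $\chi(F)\in\{3,1,-1\}$ (wedge of two spheres) or $\chi(F)=3$ ($P^2(q)$, $q$ even), then forces $p=3$ in (4) and $p\in\{3,5\}$ in (2)--(3); and graded--commutativity over $\mathbb{Z}_p$ with $p$ odd forces $q$ even whenever $H^*(F)\cong\mathbb{Z}_p[x]/\langle x^3\rangle$, whereas if the two top ranks sit in degrees $q<r$ with $2q\ne r$ the cup square of the bottom class vanishes and $F\sim_{\mathbb{Z}_p}S^q\vee S^r$.

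I expect the main obstacle to be the spectral--sequence bookkeeping in the middle step: handling several simultaneous differentials, dealing with a nontrivial $g^*$ when two of the degrees coincide (where one must work through the $\ker\tau/\operatorname{im}\sigma$ description of $E_2$), and recovering $H^*(X_G)$ together with enough of its $H^*(B_G)$-module and ring structure that the localized algebra -- in particular the alternative $P^2(q)$ versus $S^q\vee S^{2q}$ -- is pinned down. Any single configuration is a routine computation, but there are a number of them to dispatch.
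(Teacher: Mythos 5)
Your proposal is correct and follows essentially the same route as the paper: the Leray--Serre spectral sequence of the Borel fibration with the case split trivial/nontrivial $g^*$-action (the latter forcing two of $n,m,l$ to coincide, handled via the $\ker\tau/\operatorname{im}\sigma$ description of $E_2$), combined with Floyd's congruence to pin down $p\in\{3,5\}$ or $p=3$ and the Smith inequality for the degree bounds. The only cosmetic difference is that you invoke the localization theorem in its $S^{-1}H^*(X_G)\cong H^*(F)\otimes S^{-1}H^*(B_G)$ form, where the paper uses the equivalent statement that $j^*\colon H^k(X_G)\to H^k(F_G)$ is an isomorphism for $k>l$ to read off $\operatorname{rk}H^*(F)$ from $E_\infty$.
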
 
	\begin{proof}
		As $X$ is not TNHZ	in $X_G$ and $F$ is nonempty, we have $1 \leq \sum_i$ rk $H^i(F,\mathbb{Z}_p)\leq 3.$ \\
		First, we assume that   $G$ act trivially on $H^*(X).$ By Proposition \ref{pro4}, the spectral sequence of Borel fibration $X \hookrightarrow X_G \rightarrow B_G$  does not degenerate. So, the differential $d_r$ must be nontrivial for some $r\geq 2.$  By the multiplicative property of the spectral sequence and the fact that $F$ is nonempty, the differentials $d_{n+1},$ $d_{m+1}$ and $d_{l+1}$ are always trivial. The only possibilities for nontrivial differentials are $d_{m-n+1},$ $d_{l-m+1}$ or $d_{l-n+1}.$ If the differential  $d_{m-n+1}$ is nontrivial then  $d_{m-n+1}: E_{m-n+1}^{k,m}\rightarrow E_{m-n+1}^{k+m-n+1,n}$ are isomorphisms for all $k \geq 0.$ Clearly, $E_{m-n+2}^{*,*}= E_{\infty}^{*,*}.$ Note that $j^*: H^k(X_G) \rightarrow H^k(F_G)$ are isomorpisms for all $k>l$ \cite{bredon}, we get rk $H^*(F)=2.$ Thus   $F \sim_{\mathbb{Z}_p} \mathbb{S}^q$ for some $q\geq 0.$ By Proposition \ref{ pro 1}, we get $0 \leq q \leq l.$ By the Floyd's formula, we get  $\chi(F)= \chi(X)=0$ or $2.$ This implies that if  $q$ odd (respectively, even) then exactly two of  $n,m $ or $l$ are odd (respectively, even).  We will get same conclusion for nontrivial differentials $d_{l-m+1}$ or $d_{l-n+1}.$  This realizes possibility (1).
		
		\indent 	Next, assume that $G$ act nontrivially on $H^*(X).$ In this case, either  $n=m \leq l$ or $n \leq m =l.$ \\ \indent First, we consider $n=m<l.$ Let $G=<g>$ and  $\tau = 1-g^*,$ where $g^*$ is a nontrivial homomorphism induced by $g.$ Then, $\tau^p = 0$ and hence $Ker~ \tau \cong \mathbb{Z}_p.$ Suppose $Ker~ \tau =<a>.$ Thus $g^*(a)=a,$ where $\{a,b\}$ is  basis of $H^n(X).$ As ${g^*}^p=1,$ we get $g^*(b)=\alpha a+b,$ where $0 \neq \alpha \in \mathbb{Z}_p.$ Let $\sigma = 1 +g^*+g^{*2}+ \cdots + g^{*p-1}.$ Clearly $\sigma =0,$ and by Proposition \ref{re}, we have $E_2^{k,n} \cong \mathbb{Z}_p~ \forall ~k \geq 0.$  We also have  $E_2^{k,i}\cong \mathbb{Z}_p$ for $i=0,l;$ $k \geq 0.$ 
		By the multiplicative property of the spectral sequence, we get $d_{n+1}=d_{l+1}=0.$  If  $d_{l-n+1}=0$ then  $E_2^{*,*}=E_{\infty}^{*,*}.$  Thus,  rk $H^*(F)=3.$ We get $\chi(F)=3,1$ or $-1.$ As $\chi(X)\equiv \chi(F)$(mod~$p$),  $p$ must be $3$ or $5.$ Note that, for $p=5,$ both $n$ and $l$ are either even or odd. Suppose that $F$ is connected and $0 < q \leq r $ are non vanishing dimensions  of $F$ with generators  $u$ and $v,$ respectively. Clearly,   $uv=v^2=0.$ If $u^2 \neq 0$ then $F \sim_{\mathbb{Z}_p} P^2(q).$ Note that $q$ must be even \cite{bredon}, and  by Proposition \ref{ pro 1},  we have  $q<n.$ In this case, $n$ is odd and $l$ is even for $p=3,$ and both $n$ and $l$ are odd for $p=5.$  If $u^2=0$ then   $F \sim_{\mathbb{Z}_p} \mathbb{S}^q \vee \mathbb{S}^r,$ where $0 < q \leq m$ and $0< q \leq r \leq l.$ Next, if $F$ is disconnected then $F \sim_{\mathbb{Z}_p} \mathbb{S}^q \vee \mathbb{S}^r$ where either $q=0$ or $q=r=0.$ Further, if  $d_{l-n+1}\neq 0$ then  rk $H^*(F)=1$ or $2,$ according as $l-n$ is odd or even which contradicts the Floyd's formula. We get similar possibilities for $n<m=l.$ 
		
		\indent  Now, we consider  $n=m=l.$ Then $\{a,b,c\}$ is basis of $H^n(X).$  As above, we get either  $ker ~\tau \cong \mathbb{Z}_p$ or $ker ~\tau \cong \mathbb{Z}_p \oplus \mathbb{Z}_p.$ If   $ker~ \tau \cong \mathbb{Z}_p \oplus \mathbb{Z}_p$ then we  can define $g^*(a)=a,$ $g^*(b)=b$ and $g^*(c)=c+\alpha a+\beta b,$ where $  g^*(c)\neq c ~\&~$$\alpha,\beta \in \mathbb{Z}_p.$ This implies that  $\sigma =0,$ and $E_2^{k,n}=  \mathbb{Z}_p \oplus \mathbb{Z}_p~\forall~k \geq 0.$ It is easy to see that rk $H^*(F)=3.$ As  in the case when $n=m<l,$ we obtain $F \sim_{\mathbb{Z}_p} P^2(q)$ for $p=5,$ or  $F \sim_{\mathbb{Z}_p} \mathbb{S}^q \vee \mathbb{S}^r,$ for $p=3$ or $5.$  If $Ker ~\tau \cong \mathbb{Z}_p,$ then  $im ~\sigma$ either $0$ or $\mathbb{Z}_p.$ For example, if we take  $g^*(a)= a, g^*(b)=a+b$ and $g^*(c)=a+c$ then $im~\sigma =0$ and if we take  $g^*(a)= a, g^*(b)=a+b~\&~$$g^*(c)=a+b+c$ then $im~\sigma = Ker ~ \tau.$ If  $im~ \sigma =0$ then $E_2^{k,n}=  \mathbb{Z}_p~ \forall~ k \geq 0.$  Consequently,  $E_2^{*,*}=E_{\infty}^{*,*},$ and hence  rk $H^*(F)=2.$ Thus, $F \sim_{\mathbb{Z}_p} \mathbb{S}^q, ~ 0 \leq q \leq n.$ If $im ~ \sigma= \mathbb{Z}_p$ then $E_2^{k,n}=0~ \forall~ k >0$ and hence rk $H^*(F)=1.$ So, $F\sim_{\mathbb{Z}_p} pt.$ By the Floyd's formula, we must have $p=3.$   This completes the proof.
	\end{proof} 
	\begin{remark}
		For the actions of $G=\mathbb{Z}_2$ or $G= \mathbb{S}^1,$  possibilities of the fixed point sets of $X\sim_{R} \mathbb{S}^n \vee \mathbb{S}^m \vee \mathbb{S}^l,$ $1 \leq n\leq m\leq l,$ are similar to that of  $X\sim_R \mathbb{S}^n \vee \mathbb{S}^{2n}\vee \mathbb{S}^{3n}$ where $R=\mathbb{Z}_2$ or $\mathbb{Q},$ respectively \cite{singh, m circle}.
	\end{remark}
	
	\begin{example}
		It is shown that if $X$ is not TNHZ then there exists  an action of $G=\mathbb{Z}_p$  on  $Y \simeq \mathbb{S}^2 \vee \mathbb{S}^{n+2}$ with contractible fixed point set \cite{o}. We have an action of  $G$  on $S^l$, where $l\geq 1,$ with the fixed point set $\mathbb{S}^q,$ where $l-q$ is even. This action induces an action of $G=\mathbb{Z}_p$ on the wedge sum $Y \vee \mathbb{S}^l \sim_{\mathbb{Z}_p} \mathbb{S}^2 \vee \mathbb{S}^{n+2} \vee \mathbb{S}^l $ with the fixed point set $F\sim_{\mathbb{Z}_p} \mathbb{S}^q.$ This realizes possibility (1).
	\end{example}
	\begin{example}
		Consider an action of $G=\mathbb{Z}_3$ on $ \mathbb{S}^n \times \mathbb{S}^n,n=1,3,7$ with the fixed point set  $F \sim_{\mathbb{Z}_3} \{pt\} + \mathbb{S}^q$ where $q=0,2,6$ \cite{bredon}. This gives an action of $G$ on $Y= (\mathbb{S}^n \times \mathbb{S}^n) - \{pt\} \simeq \mathbb{S}^n \vee \mathbb{S}^n$ with  $F\sim_{\mathbb{Z}_3} \mathbb{S}^{q}.$ Take an action of $G$ on $\mathbb{S}^{l}$ with $F \sim_{\mathbb{Z}_3} \mathbb{S}^{r},$ where $l-r$ is even. So, we obtain an action of $G$ on $\mathbb{S}^n \vee \mathbb{S}^n \vee \mathbb{S}^{l}$  with  $F \sim_{\mathbb{Z}_3} \mathbb{S}^q \vee \mathbb{S}^{r}.$ This realizes possibility (2).
	\end{example}
	\begin{example}
		
		By the construction of Bredon \cite{bredon}, we have realized the possibility (3). We have an action of $G=\mathbb{Z}_3$ on $Y \simeq \mathbb{S}^n \vee \mathbb{S}^n, n=3,7$ with  $F\sim_{\mathbb{Z}_3} \mathbb{S}^{q}, q=2,6.$ Let $G$ act on $\mathbb{S}^5$ with the fixed point set $\mathbb{S}^3$ and  consider the Hopf map $\eta: \mathbb{S}^3 \rightarrow \mathbb{S}^2 \subset Y \simeq \mathbb{S}^3 \vee \mathbb{S}^3.$ As $\mathbb{S}^2$ is homotopically trivial in $Y,$  $\eta$  can be extended to an equivariant map  $\varphi: \mathbb{S}^5 \rightarrow  Y .$ Now, consider an action of $G$ on $\mathbb{D}^6$ with the fixed point set $\mathbb{D}^4.$ It is easy to see that  $G$ acts on  adjunction space $X=Y \cup_{\varphi} D^6 $ with the fixed point set $F = \mathbb{S}^2 \cup_{\eta} D^4 \simeq \mathbb{C}P^2.$ For a suitable choice of orientation on $\mathbb{S}^5,$ we get $[\varphi]=0$ in $\pi_5(Y),$ and hence $X\simeq  \mathbb{S}^{3}\vee \mathbb{S}^{3} \vee \mathbb{S}^{6}.$\\
		\indent Now, choose a map $\eta: \mathbb{S}^{11} \rightarrow \mathbb{S}^6 \subset Y \simeq \mathbb{S}^7 \vee \mathbb{S}^7 $ with Hopf invariant 2.  Clearly, $\eta$ can be  extended equivariantly to a map $\varphi: \mathbb{S}^{13} \rightarrow Y.$ Similarly, we  get an action of $G$ on $X=Y \cup_{\varphi} D^{14}$ with the fixed set $F= \mathbb{S}^6 \cup_{\eta} \mathbb{D}^{12}.$ For a suitable choice of orientation on $\mathbb{S}^{13},$ we get $[\varphi]=0,$ and hence $ X \simeq \mathbb{S}^7 \vee \mathbb{S}^7 \vee \mathbb{S}^{14}.$ As $\eta$ has Hopf invariant 2, we get $F \sim_{\mathbb{Z}_3} P^2(6).$ 
	\end{example}
	
	\indent  The possibilities of the fixed point sets of actions of $G=\mathbb{Z}_p$, $p$ a prime or $G=\mathbb{S}^1$  on the cohomology product of two spheres $\mathbb{S}^n \times \mathbb{S}^m$ have been discussed in  \cite{bredon, c, j1}. The fixed point sets of $G= \mathbb{Z}_p$ or $G=\mathbb{S}^1$ actions on a finite CW-complex having mod $p$ or rational cohomology  wedge of two spheres $\mathbb{S}^n \vee \mathbb{S}^m$ can be  derived by analysing the possibilities of rank of $H^*(F)$ depending on whether $X$ is TNHZ or not TNHZ in $X_G$. We have the following results:
	\begin{theorem}\label{thm 1}
		Let $G=\mathbb{Z}_p,~p$~a prime, act on a finite  CW-complex $X \sim_{\mathbb{Z}_p} \mathbb{S}^n \vee \mathbb{S}^m,$ $1 \leq n\leq m.$  Then the  fixed point set $F$ is nonempty and must be one of the following:
		\begin{enumerate}
			\item $F$ has three components and  each has point cohomology,  both $m$ and $n$ are even for $p>2$ a prime.
			\item $F\sim_{\mathbb{Z}_p} pt + \mathbb{S}^q,~~ 0<q \leq m,$ for $p>2$ a prime,  if $q$ even then both $n$ and $m$ are even and if $q$ odd then either $n$ or $m$ is even.  
			\item  $F \sim_{\mathbb{Z}_p} \mathbb{S}^{q_1} \vee \mathbb{S}^{q_{2}}$,~~ $1 \leq q_1 \leq n,\, \, 1\leq q_2 \leq m,$  for $p>2$ a prime, if $q_1$ and $q_2$ are even (respectively, odd) then both  $m$ and $n$ are  even (respectively, odd) and if one of  $q_1$ or $q_2$ is even then one of $m$ or $n$ is even.
			\item $F \sim_{\mathbb{Z}_p} P^2(q), ~~ q<n,$ if $p>2$ a prime, then all $q,m,n $ must be  even and if $p=2,$ then $q=1,2,4,8.$ 
			\item $F\sim_{\mathbb{Z}_p} pt, $  for $p>2$ a prime, either $n $ or $m$ is even.
			\item $F\sim_{\mathbb{Z}_p} \mathbb{S}^q ,p=3,$ $q \leq m,$ if  $q$ is  even then $n =m$  odd, and if $q$ is  odd then $n =m$  even.
		\end{enumerate}
	\end{theorem}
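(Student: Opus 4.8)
The plan is to follow the pattern of the proofs for $\mathbb{S}^n\times\mathbb{S}^m$ and for $\mathbb{S}^n\vee\mathbb{S}^{2n}\vee\mathbb{S}^{3n}$: read off the possible cohomology rings of $F$ from rank estimates and then sharpen the degree and parity data. Two constraints are available throughout: Proposition \ref{ pro 1} with $j=0$ gives $r:=\sum_i \mathrm{rk}\,H^i(F;\mathbb{Z}_p)\le 3$, and Floyd's formula (Proposition \ref{floyd}) gives $\chi(F)\equiv\chi(X)=1+(-1)^n+(-1)^m \pmod p$. First I would show $F\ne\varnothing$: if $F=\varnothing$ then Floyd forces $\chi(X)=3$ and $p=3$, i.e.\ $n,m$ both even; in that case a free $\mathbb{Z}_3$-action is impossible, because in the Leray--Serre spectral sequence of $X_G\to B_G$ (with $G$ acting trivially on $H^*(X;\mathbb{Z}_3)$, or, when $n=m$, after using Proposition \ref{re}) the top row can only be a source of differentials $d_r$ with $r$ odd, whose target classes lie in $H^{\mathrm{odd}}(B_{\mathbb{Z}_3};\mathbb{Z}_3)$ and are zero-divisors, so the top row cannot be killed; then $H^*(X_G)\cong H^*(X/G)$ (Proposition \ref{tom}) would be infinite-dimensional, contradicting Proposition \ref{prop 4.5}. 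Hence $F\ne\varnothing$ and $r\in\{1,2,3\}$.

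Next, $r$ together with the number of components of $F$ determines its $\mathbb{Z}_p$-cohomology ring up to a short list (pure algebra): $r=1$ gives $F\sim_{\mathbb{Z}_p} pt$; $r=2$ gives $F\sim_{\mathbb{Z}_p}\mathbb{S}^q$ (connected) or $F\sim_{\mathbb{Z}_p} pt+pt$; $r=3$ gives three $\mathbb{Z}_p$-cohomology points, or $pt+\mathbb{S}^q$, or $\mathbb{S}^{q_1}\vee\mathbb{S}^{q_2}$, or (when the top positive generator is the square of the bottom one) a space $P^2(q)$ with the cohomology of $\mathbb{CP}^2,\mathbb{HP}^2,\mathbb{OP}^2$ or $\mathbb{RP}^2$, so that $q\in\{1,2,4,8\}$ by the Hopf-invariant-one theorem, with $q=1$ excluded when $p$ is odd. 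For $r=3$, Proposition \ref{pro4} says $X$ is TNHZ and the spectral sequence degenerates; the bounds $0\le q\le m$, $1\le q_1\le n$, $1\le q_2\le m$, $q<n$ come from Proposition \ref{ pro 1} applied with suitable $j$, and the parity conditions come from the congruence $\chi(F)\equiv\chi(X)$. This yields possibilities (1)--(4).

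For $r\le 2$ (so $X$ is not TNHZ) I would note that $G$ can act nontrivially on $H^*(X;\mathbb{Z}_p)$ only when $n=m$, since otherwise $\mathrm{Aut}(H^n(X;\mathbb{Z}_p))$ has order prime to $p$. When $G$ acts trivially, the spectral sequence has rows $0,n,m$, each free of rank one over $H^*(B_G)$, and I would classify the possible patterns of $d_{n+1},d_{m-n+1},d_{m+1}$, using that a differential kills a row only if its value is a non-zero-divisor, that transgressions out of an even-degree class are constrained by Steenrod operations (for $p$ odd, via the Bockstein detecting the $s$-part of $H^{\mathrm{odd}}(B_{\mathbb{Z}_p})$), and that the fibre inclusion $F\hookrightarrow X$ induces a map of spectral sequences which is an isomorphism on $H^k(X_G)\cong H^k(F_G)$ for $k$ large. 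This last fact excludes $F\sim_{\mathbb{Z}_p} pt+pt$: since $H^{>0}(X)\to H^{>0}(F)=0$, the induced map on $E_\infty$ has rank at most one in each total degree, whereas $H^*(F_G)$ would have rank two in each degree. Reading off the surviving part of $E_\infty$ then gives $r=1\Rightarrow F\sim_{\mathbb{Z}_p} pt$ with exactly one of $n,m$ even (possibility (5)), and $r=2\Rightarrow F\sim_{\mathbb{Z}_p}\mathbb{S}^q$; running the same analysis for $n=m$ with a nontrivial action, where Proposition \ref{re} gives $\mathrm{im}\,\sigma=0$ and $E_2^{k,n}\cong\mathbb{Z}_p$ for all $k$ (so the sequence behaves like that of a single sphere), together with $\chi(F)\equiv\chi(X)$, forces $p=3$ and the stated relation between the parities of $q$ and $n=m$ (possibility (6)). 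The case $p=2$ is handled identically, with $H^*(B_{\mathbb{Z}_2};\mathbb{Z}_2)=\mathbb{Z}_2[t]$ a polynomial ring, which removes the zero-divisor obstruction and leaves $P^2(q)$, $q\in\{1,2,4,8\}$.

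The step I expect to be the main obstacle is precisely this $r\le 2$ analysis: the rank bounds of Propositions \ref{ pro 1} and \ref{pro4} are not sharp enough to exclude $F\sim_{\mathbb{Z}_p} pt+pt$ or $F\sim_{\mathbb{Z}_p}P^2(q)$ with $q=n$, nor to pin down the parities, so one must combine the multiplicative structure of the spectral sequence, Steenrod-operation restrictions on its differentials, Floyd's formula, and the comparison with the spectral sequence of $F_G\to B_G$.
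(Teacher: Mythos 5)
The paper never actually proves this theorem: it is stated after the remark that the result ``can be derived by analysing the possibilities of rank of $H^*(F)$,'' with the proof of Theorem 3.1 serving as the implicit template. Your outline reproduces exactly that template --- bound $r=\sum\mathrm{rk}\,H^i(F)$ by $3$ via Proposition \ref{ pro 1}, split into the TNHZ case $r=3$ (where Proposition \ref{pro4} gives degeneration and the ring of $F$ is read off by pure algebra, with parities from Floyd's formula) and the non-TNHZ case $r\le 2$ (where one analyses the Borel spectral sequence according to whether $\pi_1(B_G)$ acts trivially, using Proposition \ref{re} when $n=m$) --- and in addition supplies the nonemptiness argument, which is genuinely needed here (it is hypothesised, not proved, in Theorem 3.1) and which you carry out correctly: Floyd's formula reduces to the case $p=3$, $n,m$ both even, where the odd-degree part of $H^*(B_{\mathbb{Z}_3};\mathbb{Z}_3)$ consists of zero-divisors and so no row of the spectral sequence can be killed, contradicting Propositions \ref{prop 4.5} and \ref{tom}.

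Two points deserve flagging. First, the strict inequality $q<n$ in possibility (4) does not follow from Proposition \ref{ pro 1}, which only yields $q\le n$ (and $2q\le m$); the paper makes the identical unsupported jump in Theorem 3.1, so you are no worse off, but the step is not justified as written. Second, and more substantively, you have correctly located the real crux in the $r\le 2$ analysis: with a trivial coefficient action and $n<m$, $m\equiv n\pmod 2$, nothing in the module structure alone prevents $d_{m-n+1}(1\otimes b)=\alpha\,st^{(m-n)/2}\otimes a$ with $\alpha\ne 0$; multiplication by this zero-divisor would leave a rank-two $E_\infty$ in high total degrees and hence $F\sim_{\mathbb{Z}_p}\mathbb{S}^q$ with $n\ne m$, contradicting clause (6). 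The paper's template simply asserts that a nontrivial $d_{m-n+1}$ is an isomorphism on every column. Your proposed fix via the Bockstein is the right one --- $\beta(1\otimes b)=0$ because $H^{m+1}(X)=0$, whereas $\beta(st^{j})=t^{j+1}\ne 0$, so compatibility of $\beta$ with the differentials forces $\alpha=0$ --- but it relies on the Kudo--Araki--V\'azquez action of stable operations on the pages of the Leray--Serre spectral sequence, which should be stated and cited explicitly rather than gestured at; as it stands this is the one step of your argument that is named but not actually carried out. Everything else (the Floyd parity checks for (1)--(6), the reduction of $r=2$ to the $n=m$ nontrivial-action case, the forcing of $p=3$ there) checks out.
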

	\begin{theorem}
		Let $G=\mathbb{S}^1$ act on a finite CW-complex $X \sim_\mathbb{Q} \mathbb{S}^n \vee \mathbb{S}^m,$ $1 \leq n\leq m.$ Then the  fixed point set $F$ is nonempty and must be one of the following:
		\begin{enumerate}
			\item $F$ has three components each component has point cohomology, $m$ and $n$ both even.
			\item $F\sim_\mathbb{Q} pt + \mathbb{S}^q, ~q>0,$ if $q$ is even then   both $n$ and $m$ are even, and if $q$ is odd then either $m$ or $n$ is odd.
			\item $F \sim_\mathbb{Q} \mathbb{S}^{q_1} \vee \mathbb{S}^{q_2}$,~~$1 \leq q_1 \leq n,\, \, 1\leq q_2 \leq m,$ if both $q_1$ and $q_2$ are even (respectively, odd) then both  $m$ and $n$ are  even (respectively, odd) and if one of  $q_1$ or $q_2$ is even then one of $m$ or $n$ is even.
			\item $F \sim_\mathbb{Q} P^2(q),~q \leq n,$ if $q$ is even then both $n~ \&~m$ are even and if $q$ is odd then exactly one of $n$ or $m$ is even. 
			\item $F \sim_\mathbb{Q} pt,$ either $n$ or $m$ is even.
		\end{enumerate}
	\end{theorem}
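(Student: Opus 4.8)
The plan is to run the rank-plus-Euler-characteristic analysis, now over $\mathbb{Q}$ with classifying space $B_{\mathbb{S}^1}\simeq\mathbb{CP}^{\infty}$. First I would record that $H^*(X;\mathbb{Q})$ has total rank $3$, concentrated in degrees $0,n,m$, so $\chi(X)=1+(-1)^n+(-1)^m\in\{-1,1,3\}$. For an $\mathbb{S}^1$-action on a finite CW-complex one has the rational identity $\chi(X)=\chi(F)$ (the torus analogue of Proposition \ref{floyd}), and since $\chi(X)\neq 0$ this already forces $F\neq\emptyset$. The $\mathbb{S}^1$-analogue of Proposition \ref{ pro 1} gives $\sum_{i\geq j}\mathrm{rk}\,H^i(F;\mathbb{Q})\leq\sum_{i\geq j}\mathrm{rk}\,H^i(X;\mathbb{Q})$ for all $j\geq 0$; with $j=0$ this yields $1\leq\mathrm{rk}\,H^*(F;\mathbb{Q})\leq 3$, and the choices $j=n+1$ and $j=m+1$ will later pin down the dimension ranges.

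Next I would split on $\mathrm{rk}\,H^*(F;\mathbb{Q})$. Rank $1$: then $F\sim_\mathbb{Q}pt$, and $\chi(F)=1$ forces exactly one of $n,m$ to be even; this is possibility (5) (and, since $P^2(q)\sim_\mathbb{Q}pt$ for $q$ odd, also the odd-$q$ part of (4)). Rank $2$: then $F$ is either two $\mathbb{Q}$-acyclic components or a single component with the cohomology of $\mathbb{S}^q$, so $\chi(F)\in\{0,2\}$ is even, contradicting $\chi(X)\in\{-1,1,3\}$; hence rank $2$ does not occur. (This is exactly where the $\mathbb{S}^1$-list is shorter than the $\mathbb{Z}_p$-list of Theorem \ref{thm 1}, whose possibility (6) exploits the mod-$p$ congruence rather than an equality.) Rank $3$: then the total ranks agree, so $X$ is TNHZ in $X_G$ and the Leray--Serre spectral sequence of $X\hookrightarrow X_G\to B_G$ degenerates; the substantive step is to classify the graded-commutative $\mathbb{Q}$-algebras $A=H^*(F;\mathbb{Q})$ with $\dim_\mathbb{Q}A=3$. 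According as $\dim A^0=3,2,1$ one gets: three $\mathbb{Q}$-acyclic components; or $F\sim_\mathbb{Q}pt+\mathbb{S}^q$ (the one positive class squaring to zero for dimension reasons); or $F$ connected with two positive generators, which splits into $F\sim_\mathbb{Q}\mathbb{S}^{q_1}\vee\mathbb{S}^{q_2}$ (bottom generator squares to zero) and $F\sim_\mathbb{Q}P^2(q)$ (it does not, which over $\mathbb{Q}$ forces $q$ even by graded-commutativity). These four types are possibilities (1)--(4).

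To close the argument I would feed $\chi(F)=\chi(X)=1+(-1)^n+(-1)^m$ and the filtered inequalities back into each type: $\chi=3$ gives $n,m$ both even (possibility (1) and the $q$-even halves of (2) and (4)); $\chi=-1$ gives $n,m$ both odd (the $q_1,q_2$-odd half of (3)); $\chi=1$ gives exactly one of $n,m$ even (the remaining sub-cases of (2), (3) and all of (5)). The ranges follow from $\sum_{i\geq j}\mathrm{rk}\,H^i(F;\mathbb{Q})\leq\sum_{i\geq j}\mathrm{rk}\,H^i(X;\mathbb{Q})$: $j=m+1$ kills $H^{>m}(F;\mathbb{Q})$, and $j=n+1$ (with $n\leq m$) gives $q_1\leq n$ and $q_2\leq m$ for $\mathbb{S}^{q_1}\vee\mathbb{S}^{q_2}$, $q\leq n$ for $P^2(q)$, and $q\leq m$ for $pt+\mathbb{S}^q$.

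The two delicate points, I expect, will be: (i) invoking the $\mathbb{Q}$-coefficient $\mathbb{S}^1$-versions of the rank inequality and of the Euler-characteristic identity --- Propositions \ref{ pro 1} and \ref{floyd} being stated in the paper only for $\mathbb{Z}_p$ --- which rest on the localization theorem for torus actions; and (ii) the degenerate case $P^2(q)$ with $q$ odd, where $H^*(-;\mathbb{Q})$ collapses to that of a point, so item (4) in that range coincides cohomologically with item (5) and must be worded accordingly.
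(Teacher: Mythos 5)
Your proposal is correct and follows exactly the route the paper intends: the paper prints no proof of this theorem, stating only that it "can be derived by analysing the possibilities of rank of $H^*(F)$ depending on whether $X$ is TNHZ or not," and your argument is precisely that analysis (rank inequality plus $\chi(F)=\chi(X)$, with the parity of $\chi(X)\in\{-1,1,3\}$ ruling out total rank $2$ and the classification of rank-$3$ graded algebras giving possibilities (1)--(4)). Your two flagged "delicate points" are handled correctly: the $\mathbb{S}^1$/$\mathbb{Q}$ versions of the rank and Euler-characteristic results are standard consequences of the localization theorem for torus actions, and the $q$-odd clause of item (4) is indeed only the degenerate case where $P^2(q)\sim_{\mathbb{Q}}pt$.
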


	\noindent Examples of  these possibilities can be realized using the fact that $\mathbb{S}^n \times \mathbb{S}^m - \{pt\} \simeq \mathbb{S}^n \vee \mathbb{S}^m.$

	\section{Cohomology Algebra of The Orbit Spaces}
	\noindent In this section, we determine the cohomology algebra of orbit spaces of free actions of $G=\mathbb{Z}_p,~ p$ a prime or $G=\mathbb{S}^d,~d=1,3$ on a finite CW-complex $X \sim_R \mathbb{S}^n \vee \mathbb{S}^m \vee \mathbb{S}^l, 1\leq n\leq m \leq l,$ where $R=\mathbb{Z}_p, ~p $ a prime or $\mathbb{Q}$ respectively. \\
	\indent First, we give examples of   free actions of $G$ on $X.$
	\begin{example}{\label{example 4.1}}
		Let  $G=\mathbb{S}^3$ act on $\mathbb{S}^{4m+3}\subset \mathbb{H}^{m+1}$ by componentwize multiplication  $(g,(z_0,z_1,\cdots,z_m)) \mapsto (gz_0,gz_1,\cdots,gz_m)~ \forall g \in \mathbb{S}^{3}  .$ Take similar action of $G$ on  $\mathbb{S}^{4l+3}\subset \mathbb{H}^{l+1}$ in which a copy of $(4n+3)$-sphere  $\mathbb{S}^{4n+3}$ is common in both   spheres $\mathbb{S}^{4m+3}$ $~\&~$  $\mathbb{S}^{4l+3},$ where $1\leq n < m < l.$  After attaching  spheres $\mathbb{S}^{4m+3}$ and $\mathbb{S}^{4l+3}$ along  the identity map $i:\mathbb{S}^{4n+3} \rightarrow \mathbb{S}^{4n+3},$ we obtain a Hausdorff space $Y = \mathbb{S}^{4m+3} \cup_{\mathbb{S}^{4n+3}} \mathbb{S}^{4l+3}.$ As  $\mathbb{S}^{4n+3}$ is invariant under  actions of $G$ on $\mathbb{S}^{4m+3}$ and $\mathbb{S}^{4l+3},$ $G$ acts freely on $Y.$ Define $U=Y-\{p_0\}$  and $V=Y-\{q_0\}$ where $p_0 \in \mathbb{S}^{4m+3}-\mathbb{S}^{4n+3}$ and $q_0 \in \mathbb{S}^{4l+3}-\mathbb{S}^{4n+3}.$ Then $U$ and $V$ are open subsets of $Y$ and $Y=U \cup V.$ Clearly, $U \simeq \mathbb{S}^{4l+3}, ~ V \simeq \mathbb{S}^{4m+3}$ and $U \cap V \simeq \mathbb{S}^{4n+3}.$ By Mayer-Vietoris cohomology exact sequence we get $H^i(Y;R) \cong R$ for $i=0,4n+4,4m+3~\&~ 4l+3;$ and trivial otherwise, where $R= \mathbb{Z}_p,~p$ a prime or  $\mathbb{Q}.$  It is easy to see that  $Y \sim_{R} \mathbb{S}^{4n+4} \vee \mathbb{S}^{4m+3} \vee \mathbb{S}^{4l+3}.$ This action also induces a free action of  $G=\mathbb{Z}_p,~p$ a prime or  $G=\mathbb{S}^1$ on $Y.$\\
		\indent Similarly, we can  construct a free involution on $Y\sim_{\mathbb{Z}_2} \mathbb{S}^{n} \vee \mathbb{S}^{m} \vee \mathbb{S}^{l}$ and free  actions of $G=\mathbb{Z}_p,~p>2$ a prime and $G=\mathbb{S}^1$ on $Y \sim_{R} \mathbb{S}^{2n+2} \vee \mathbb{S}^{2m+1} \vee \mathbb{S}^{2l+1}$ for suitable choices of $n,m $ and $l.$  Dotzel et al. \cite{f} have constructed an another example of a  free action of $G=\mathbb{Z}_p$, $p$ an odd prime, on a finite CW-complex $Y$ with the mod $p$ cohomology  $\mathbb{S}^n \vee \mathbb{S}^{2n} \vee \mathbb{S}^{3n}.$
		
	\end{example}

	\noindent First, we determine the orbit spaces of free involutions on a finite CW-complex $X \sim_{\mathbb{Z}_2} \mathbb{S}^n \vee \mathbb{S}^m \vee \mathbb{S}^l, 1 \leq n \leq m \leq l.$ 
	
	\begin{theorem}\label{thm 4.4}
		Let $G = \mathbb{Z}_2$ act freely on a finite CW-complex $X \sim_{\mathbb{Z}_2} \mathbb{S}^n \vee \mathbb{S}^m \vee \mathbb{S}^l, 1\leq  n \leq m \leq l.$ Then the graded commutative algebra of the orbit space  is one of the following:
		\begin{enumerate}
			\item $\mathbb{Z}_2 [y,z]/<y^{l+1},z^2+a_0y^{2n}+a_1y^nz,y^{m-n+1}z+a_2y^{m+1}>,$ where deg $y=1,$ deg $z=n$ and $a_0,a_1,a_2 \in \mathbb{Z}_2~\&~ a_0=0$ if $l<2n, a_1=0$ if $m < 2n$ and $a_2=0$ if $l=m,$
			\item $\mathbb{Z}_2 [y,z]/<y^{n+1},z^2+a_0y^nz,y^{l-m+1}z>,$ where deg $y=1,$ deg $z=m$ and $a_0\in \mathbb{Z}_2~\&~ a_0=0$ if $n< m~\mbox{or}~  \{l< 2n~ \&~ n=m\},$ and 
			\item $\mathbb{Z}_2 [y,z]/<y^{m+1},z^2+a_0y^{2n}+a_1y^nz,y^{l-n+1}z>,$ where deg $y=1,$ deg $z=n$ and $a_0,a_1 \in \mathbb{Z}_2~\&~ a_0=0$ if $m<2n~\mbox{and}~ a_1=0$ if $l < 2n.$
		\end{enumerate}
	\end{theorem}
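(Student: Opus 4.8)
The plan is to run the Leray–Serre spectral sequence of the Borel fibration $X \hookrightarrow X_G \to B_G$ with $G=\mathbb{Z}_2$, $B_G=\mathbb{RP}^\infty$, and $\mathbb{Z}_2$-coefficients, and then use Proposition~\ref{tom} to identify $H^*(X_G;\mathbb{Z}_2)$ with $H^*(X/G;\mathbb{Z}_2)$. Since the action is free, Proposition~\ref{prop 4.5} gives $H^i(X/G;\mathbb{Z}_2)=0$ for $i>l$, so the spectral sequence cannot degenerate (otherwise $E_\infty^{k,l}=E_2^{k,l}=H^k(\mathbb{RP}^\infty;\mathbb{Z}_2)\neq 0$ for all $k$, giving nonzero cohomology in arbitrarily high degrees). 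The first step is therefore to determine which differentials are nonzero. With $\mathbb{Z}_2$-coefficients $\pi_1(B_G)$ acts trivially on $H^*(X)$, so by Proposition~2.1 $E_2^{k,i}=H^k(\mathbb{RP}^\infty;\mathbb{Z}_2)\otimes H^i(X;\mathbb{Z}_2)$, which is $\mathbb{Z}_2$ in bidegrees $(k,0),(k,n),(k,m),(k,l)$ for each $k\ge 0$ and $0$ otherwise. Let $t\in E_2^{1,0}$ be the polynomial generator and $a,b,c$ the classes in fiber degrees $n,m,l$. The multiplicative structure forces each $d_r$ to be determined by its values on $a,b,c$, and the possible nonzero differentials are $d_{n+1}(a)$, $d_{m+1}(b)$, $d_{l+1}(c)$ landing in the base, or the "fiber-to-fiber" differentials $d_{m-n+1}(b)=t^{m-n+1}a$, $d_{l-m+1}(c)=t^{l-m+1}b$, $d_{l-n+1}(c)=t^{l-n+1}a$. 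One then does a case analysis on which of these first fires, using that after it fires the surviving $E_\infty$ must vanish above total degree $l$ in the appropriate columns; this should pin the behaviour down to exactly three scenarios, corresponding to the three cases in the statement (roughly: $d_{n+1}(a)=t^{n+1}$ kills the $a$-row and $b,c$ survive to give case-type behaviour; $d_{m+1}(b)=t^{m+1}$; and $d_{l+1}(c)=t^{l+1}$ — with the remaining two spheres handled by the fiber-to-fiber differentials, which must then all be zero for dimension reasons).

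The second step is to read off the additive structure of $H^*(X_G;\mathbb{Z}_2)$ from $E_\infty$ and then lift to the ring structure. In each case $E_\infty^{*,*}$ is a truncated-polynomial-type object: one generator $y\in H^1(X_G)$ coming from $\pi^*t$ (with $y^{N+1}=0$ where $N+1$ is the level at which the base column dies), and one generator $z\in H^n(X_G)$ or $H^m(X_G)$ coming from the surviving fiber class (the one among $a,b,c$ that is a permanent cycle and is not hit). The ring structure is recovered by: (i) $i^*z$ generates the corresponding $H^n(X)$ or $H^m(X)$ (edge homomorphism, Proposition~2.3), so $z$ is genuinely a new generator; (ii) the relations $y^{l+1}=0$ etc. come directly from where the base edge homomorphism dies; (iii) the relation expressing $z^2$ — since $(i^*z)^2=0$ in $H^*(X)$, the class $z^2$ lies in the kernel of $i^*$, i.e. in the image of $\pi^*$ plus lower filtration, so $z^2 = a_0 y^{2n} + a_1 y^n z$ for scalars $a_0,a_1\in\mathbb{Z}_2$ (with $a_0=0$ forced when $2n>l$ since then $y^{2n}=0$, and $a_1=0$ when $n+{\rm(deg\,}z)>l$, i.e. $m<2n$ in case (1)); and (iv) the relation $y^{m-n+1}z + a_2 y^{m+1}=0$ in case (1) comes from the fact that in degree $m$ the class $y^{m-n}z$ must hit the residual class from the original $b$: more precisely, the $b$-row in case (1) dies at $E_{l-m+1}$ or survives, and tracking the filtration of $H^{m}(X_G)$ and $H^{l+1-n+n}(X_G)$ gives exactly this relation with $a_2=0$ when $l=m$ (the two top spheres coincide). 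I would set up the filtration degree bookkeeping carefully to get the precise vanishing conditions on $a_0,a_1,a_2$ stated in the theorem.

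The third step is to verify that the three algebras are mutually consistent with the constraint $H^i=0$ for $i>l$ and to match the indices. For case (1) the base dies at level $l+1$ (so $y^{l+1}=0$), $z$ has degree $n$, and one checks $\dim H^i(X_G;\mathbb{Z}_2)$ agrees with counting monomials $y^j$ and $y^j z$ subject to the relations; the total dimensions in each degree $\le l$ should be $1$ or $2$ in the expected pattern, and $0$ above $l$, which forces the relation $y^{m-n+1}z=a_2 y^{m+1}$ to be present (it is precisely what truncates the $z$-tower at the right spot). Cases (2) and (3) are analogous with $(y^{n+1}=0, \deg z=m)$ and $(y^{m+1}=0,\deg z=n)$ respectively; in case (3) one additionally needs $y^{l-n+1}z=0$, and in case (2) $y^{l-m+1}z=0$, again read off from filtration jumps.

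\medskip

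\noindent\textbf{Main obstacle.} The delicate point is not the spectral-sequence mechanics but extracting the \emph{exact} multiplicative relations — in particular determining the scalars $a_0,a_1,a_2$ (or rather, which of them are forced to vanish) and the precise exponent in the mixed relation $y^{m-n+1}z+a_2y^{m+1}$. This requires carefully tracking the decreasing filtration on $H^*(X_G;\mathbb{Z}_2)$ induced by the spectral sequence, identifying in which filtration level each product like $z^2$ or $y^kz$ lives, and using the edge homomorphism $i^*$ together with the known ring structure $H^*(X;\mathbb{Z}_2)=\mathbb{Z}_2[a,b,c]/\langle a^2,b^2,c^2,ab,bc,ac\rangle$ to rule out or detect each possible term. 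The case $l=m$ (and $n=m$) needs separate attention because the fiber classes collide in degree and some differentials that are "generically" available become unavailable, which is exactly why $a_2$ (resp. $a_0$) is pinned to zero there; I expect this boundary bookkeeping to be the most error-prone part and would organize it as a short lemma on filtration degrees before assembling the three cases.
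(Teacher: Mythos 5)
Your overall strategy for the case where the local coefficient system is simple matches the paper's Case (1): identify the first nonzero differential among $d_{m-n+1}$, $d_{l-m+1}$, $d_{l-n+1}$, force a subsequent transgression to kill the base row by Proposition \ref{prop 4.5}, read off $E_\infty$, and lift to the ring structure using the edge homomorphisms and filtration bookkeeping, with $(i^*z)^2=0$ giving $z^2=a_0y^{2n}+a_1y^nz$ and the degree constraints forcing the stated vanishing of $a_0,a_1,a_2$. That part is essentially the paper's argument.

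However, there is a genuine gap at the outset: your assertion that ``with $\mathbb{Z}_2$-coefficients $\pi_1(B_G)$ acts trivially on $H^*(X)$'' is false. Triviality of the $\pi_1(B_G)$-action is automatic only when each $H^i(X;\mathbb{Z}_2)$ has rank at most one; here, whenever two or three of $n,m,l$ coincide, the relevant group has rank $2$ or $3$ and the generator of $\mathbb{Z}_2$ can act by a nontrivial automorphism (e.g.\ swapping two wedge summands of the same dimension). The paper devotes its entire Case (2) to this, where the $E_2$-term is no longer $H^*(B_G)\otimes H^*(X)$ but must be computed from $\ker\tau/\operatorname{im}\sigma$ via Proposition \ref{re}; the resulting $E_2$ pages and the subsequent differential analysis are genuinely different (for instance, when $g^*$ swaps $a$ and $b$ one gets $E_2^{k,n}=0$ for $k>0$, so no fiber-to-fiber differential is needed at all). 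Moreover, the case $n=m=l$ is \emph{impossible} under a trivial action (the single available differential $d_{n+1}$ cannot kill a rank-three row mapping to a rank-one target), so free involutions in that case are realized \emph{only} through nontrivial actions; omitting Case (2) therefore leaves part of the theorem unproved, not merely under-detailed. A secondary, smaller issue: your parenthetical description of the three scenarios (one transgression per sphere, ``fiber-to-fiber differentials\ldots all zero'') is not right even in the simple-system case --- exactly one fiber-to-fiber differential must be nonzero, pairing two of the three fiber classes, and only the remaining class transgresses; two independent transgressions to the same rank-one base row cannot both be nonzero.
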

	\begin{proof}
		The $E_2$-term of the Leray-Serre spectral sequence of the Borel fibration $ X \stackrel{i} \hookrightarrow X_G \stackrel{\pi} \rightarrow B_G$ depends upon $\pi_1(B_G)$ acts trivially or nontrivially on $H^*(X).$ So, we consider two cases (1) $\pi_1(B_G)$ acts trivially on $H^*(X),$ and (2) $\pi_1(B_G)$ acts nontrivially on $H^*(X).$\\
		\textbf{Case(1)} When  $\pi_1(B_G)$ acts trivially on $H^*(X).$\\
		In this case the system of local coefficients  of the fibration $X \hookrightarrow X_G \rightarrow B_G$ is simple and hence  $E_2^{k,i} \cong  H^{k}(B_G)\otimes H^i(X).$ Since $G$ acts freely on $X,$ the spectral sequence must be non degenerated. So, we  have $d_r \neq 0$ for some $r \geq 2.$ By the multiplicative property of the spectral sequence we observe that the case $n=m=l$ is not possible and for the other cases the least $r$ for which $d_r \neq 0$ must be $(i)~m-n+1, ~(ii)~l-m+1$ or $~(iii)~l-n+1.$\\ 
		\textbf {Subcase(i)} Let $d_{m-n+1}\neq 0.$  In this case, $n <m \leq l.$ So, we have   $d_{m-n+1}(1 \otimes b)=t^{m-n+1} \otimes a $ when $m<l,$ or $\{d_{m-n+1}(1 \otimes b)= t^{m-n+1}\otimes a~ \mbox{or} ~ d_{m-n+1}(1 \otimes c)= t^{m-n+1}\otimes a~\mbox{or}~d_{m-n+1}(1 \otimes b)=d_{m-n+1}(1 \otimes c)= t^{m-n+1}\otimes a\}$ when $m=l.$  Thus,  $ E_{m-n+2}^{k,i} \cong \mathbb{Z}_2~\forall ~ k\geq 0~ \&~ i=0,l; ~ \mbox{and }~E_{m-n+2}^{k,n} \cong \mathbb{Z}_2, 0 \leq k \leq m-n.$ By Proposition \ref{prop 4.5}, the differential $d_{l+1}$ must be nontrivial. Accordingly, we have  $d_{l+1}(1 \otimes c)= t^{l+1}\otimes 1$ when $m<l$ or \{$d_{l+1}(1 \otimes c)= t^{l+1}\otimes 1 $ or $d_{l+1}(1 \otimes b)= t^{l+1}\otimes 1$ or $d_{l+1}(1 \otimes (b+c))= t^{l+1}\otimes 1$\}  when $m=l.$   Consequently, $E_{l+2}^{*,*}= E^{*,*}_{\infty}$ and hence $E^{k,0}_{\infty}\cong \mathbb{Z}_2, 0 \leq k \leq l;   E^{k,n}_{\infty}\cong \mathbb{Z}_2,0 \leq k \leq m-n;$ and trivial  otherwise. The cohomology groups are given by 	
		\[
		H^j(X_G)=
		\begin{cases}
			\mathbb{Z}_2  & 0 \leq j <  n~\&~m<j \leq l \\
			\mathbb{Z}_2 \oplus \mathbb{Z}_2  & n \leq j \leq m\\
			0 & \text{otherwise}
		\end{cases}
		\]
		Let $y \in E^{1,0}_{\infty}$ and $u \in E_{\infty}^{0,n}$ be the elements corresponding to permanent cocycles  $t \otimes 1 \in E_2^{1,0}$ and $1 \otimes a \in E_2^{0,n},$ respectively. Clearly, $y^{l+1}=u^2=y^{m-n+1}u=0.$ Thus the total complex Tot$E_{\infty}^{*,*}$ is given by $\mathbb{Z}_2[y,u]/<y^{l+1},u^2,y^{m-n+1}u>$ where deg $y=1$ and deg $u=n.$ Let $z \in H^n(X_G)$ corresponds to $u$ such that $i^*(z)=a.$ We get $z^2=a_0y^{2n}+a_1y^nz$ and $y^{m-n+1}z=a_2y^{m+1}$ where $a_0,a_1,a_2 \in \mathbb{Z}_2.$ Note that $a_0=0$ if $l<2n,~ a_1=0$  if $m<2n~ \&~ a_2=0 $ if $m=l.$  Thus the cohomology ring $H^*(X_G)$ is given by $$\mathbb{Z}_2[y,z]/<y^{l+1},z^2+a_0y^{2n}+a_1y^nz,y^{m-n+1}z+a_2y^{m+1}>$$ where deg $y=1$ and deg $z=n.$ By Proposition \ref{tom}, we have $H^*(X_G) \cong H^*(X/G).$  This realizes possibility (1).\\
		\textbf{Subcase(ii)} Let $d_{l-m+1}\neq 0.$ In this case $n \leq m <l.$ We have    $d_{l-m+1}(1 \otimes c)=t^{l-m+1} \otimes b $ when $n<m$ or $\{d_{l-m+1}(1 \otimes c)=t^{l-m+1} \otimes b ~\mbox{or} ~d_{l-m+1}(1 \otimes c)=t^{l-m+1} \otimes a \}$ when $n=m.$ Thus, $\{ E_{l-m+2}^{k,i} \cong \mathbb{Z}_2~\forall ~ k\geq 0~\&~ i=0,n;$ ~and ~ $E_{l-m+2}^{k,m} \cong \mathbb{Z}_2, 0 \leq k \leq l-m\}$ when $n< m$ or $ \{E_{l-m+2}^{k,0} \cong \mathbb{Z}_2~\forall ~ k\geq 0;$ $E_{l-m+2}^{k,m} \cong \mathbb{Z}_2\oplus \mathbb{Z}_2, 0 \leq k \leq l-m;$ and  $E_{l-m+2}^{k,m} \cong \mathbb{Z}_2 ~ \forall~ k > l-m\}$ when $n=m.$ By Proposition \ref{prop 4.5}, $l\leq n+m$ and  the differential $d_{n+1}$ must be nontrivial. Accordingly,  we have  $d_{n+1}(1 \otimes a)= t^{n+1}\otimes 1$ when $n<m$ or $\{d_{n+1}(1 \otimes a)= t^{n+1}\otimes 1 ~\mbox{or}~d_{n+1}(1 \otimes b)= t^{n+1}\otimes 1\}$ when $n=m.$  Consequently, $E_{n+2}^{*,*}= E^{*,*}_{\infty}$ and hence  $E^{k,0}_{\infty}\cong \mathbb{Z}_2, 0 \leq k \leq n; E^{k,m}_{\infty}\cong \mathbb{Z}_2, 0 \leq k \leq l-m;$ and trivial otherwise. The cohomology groups are given by 	
		\[
		H^j(X_G)=
		\begin{cases}
			\mathbb{Z}_2  & 0 \leq j <  n~\&~ m+1\leq j \leq l \\
			\mathbb{Z}_2  & j=n,m ~\& ~n < m\\
			\mathbb{Z}_2 \oplus \mathbb{Z}_2 & j=n ~\&~ n=m\\ 
			0 & \text{otherwise}
		\end{cases}
		\]
		Let  $y \in E^{1,0}_{\infty}$ and $u \in E_{\infty}^{0,n}$ be the elements  corresponding to permanent cocycles $t \otimes 1 \in E_2^{1,0}$ and $\{ 1\otimes b ~\mbox{or}~ 1 \otimes a \in E_2^{0,n} \}$ respectively. Clearly, $y^{n+1} =u^2=y^{l-m+1}u=0.$ Thus the total complex Tot$E_{\infty}^{*,*}$ is given by $\mathbb{Z}_2[y,u]/<y^{n+1},u^2,y^{l-m+1}u>$ where deg $y=1$ and deg $u=m.$ Let $z \in H^n(X_G)$ corresponds to $u$ such that $i^*(z)=b$ or $a.$  Clearly, $z^2=a_0y^nz,a_0\in \mathbb{Z}_2$ where $ a_0=0$ if $n< m~\mbox{or}~  \{l< 2n~ \&~ n=m\}.$  Thus the cohomology ring $H^*(X/G)$ is given by $$\mathbb{Z}_2[y,z]/<y^{n+1},z^2+a_0y^nz,y^{l-m+1}z>$$ where deg $y=1$ and deg $z=m.$ This realizes possibility (2).\\
		\textbf{Subcase(iii)} Let $d_{l-n+1}\neq 0.$ In this case, we consider $n<m<l.$  We have $d_{l-n+1}(1 \otimes c)=t^{l-n+1} \otimes a .$ Thus, $ E_{l-n+2}^{k,i} \cong \mathbb{Z}_2~\forall ~ k\geq 0~ \& ~ i=0,m;$ and $E_{l-n+2}^{k,n} \cong \mathbb{Z}_2, 0 \leq k \leq l-n.$   Clearly, $l\leq m+n$ and $d_{m+1}(1 \otimes b)= t^{m+1}\otimes 1.$ Consequently, $E_{m+2}^{*,*}= E^{*,*}_{\infty}$ and hence $E^{k,0}_{\infty}\cong \mathbb{Z}_2, 0 \leq k \leq m; E^{k,n}_{\infty}\cong \mathbb{Z}_2, 0 \leq k \leq l-n;$ and trivial otherwise. The cohomology groups are given by 	
		\[
		H^k(X_G)=
		\begin{cases}
			\mathbb{Z}_2  & 0 \leq k <  n~\&~m<k \leq l \\
			\mathbb{Z}_2 \oplus \mathbb{Z}_2  & n\leq k \leq m\\
			0 & \text{otherwise}
		\end{cases}
		\]
		Let  $y \in E^{1,0}_{\infty}$ and $u \in E_{\infty}^{0,n}$ be the elements corresponding to permanent cocycles $t \otimes 1 \in E_2^{1,0}$ and $1 \otimes a \in E_2^{0,n},$ respectively. Clearly, $y^{m+1}=u^2=y^{l-n+1}u=0.$  Thus the total complex Tot$E_{\infty}^{*,*}$ is given by $\mathbb{Z}_2[y,u]/<y^{m+1},u^2,y^{l-n+1}u>$ where deg $y=1$ and deg $u=n.$ Let $z \in H^n(X_G)$ corresponds to $u$ such that $i^*(z)=a.$ We get $z^2=a_0y^{2n}+a_1y^nz$ where $a_0,a_1 \in \mathbb{Z}_2$ with $a_0=0$ if $m<2n~\&~ a_1=0 $  if $l<2n.$ Thus the cohomology ring $H^*(X/G)$ is given by $$\mathbb{Z}_2[y,z]/<y^{m+1},z^2+a_0y^{2n}+a_1y^nz,y^{l-n+1}z>$$ where deg $y=1$ and deg $z=n.$ This realizes possibility (3).\\
		\textbf{Case(2)} When $\pi_1(B_G)$ acts nontrivially on $H^*(X).$\\
		In this case  either $n=m\leq l$ or $n<m=l.$ Let $g$ be the generator of $\pi_1(B_G).$\\ If $n=m < l$ then  we have following  three possibilities of nontrivial actions of $\pi_1(B_G)$ on $H^n(X)$: $(i)~ g^*(a)=b, g^*(b)=a$ ~$(ii)~ g^*(a)=a+b, g^*(b)=b$~ $(iii)~ g^*(a)=a, g^*(b)=a+b.$ If $g^*(a)=b$ and $ g^*(b)=a$ then by Proposition \ref{re}, we get $E_2^{0,n}\cong \mathbb{Z}_2$ generated by $ a+b$ and $E_2^{k,n}\cong 0~ \forall ~ k>0.$  As $\pi_1(B_G)$ acts trivially on $H^l(X),$ we get $E^{k,l}_{2}\cong \mathbb{Z}_2$ generated by $t^k \otimes c ~ \forall~ k \geq 0.$ If $d_{n+1}(a+b)= t^{n+1} \otimes 1$ then $0=d_{n+1}((a+b) (t\otimes 1))= t^{n+2} \otimes 1,$ a contradiction. Therefore the differential $d_{n+1}$ is trivial. As $G$ acts freely on $X,$ we must have   the differential $d_{l+1}(1 \otimes c)= t^{l+1}\otimes 1.$ Consequently, $E_{l+2}^{*,*}= E^{*,*}_{\infty}$ and hence $E^{k,0}_{\infty}\cong \mathbb{Z}_2, 0 \leq k \leq l;  E^{0,n}_{\infty} \cong \mathbb{Z}_2;$ and trivial otherwise. Thus,  we have  	
		\[
		H^j(X_G)=
		\begin{cases}
			\mathbb{Z}_2  & 0 \leq j < n, n < j \leq l\\
			\mathbb{Z}_2 \oplus \mathbb{Z}_2 & j=n\\
			0 & \text{otherwise}
		\end{cases}
		\]
		Let $y \in E^{1,0}_{\infty}$ and $u \in E_{\infty}^{0,n}$    be the elements corresponding to permanent cocycle  $t \otimes 1 \in E_2^{1,0}$ and  $a+b \in E_2^{0,n},$  respectively. Clearly, we have  $y^{l+1}= yu=0.$ Thus the total complex Tot$E_{\infty}^{*,*}$ is given by $\mathbb{Z}_2[y,u]/<y^{l+1},u^2,yu>,$ where deg $y=1$ and deg $u=n.$ Let $z \in H^n(X_G)$ corresponds to $u$ such that $i^*(z)=a+b.$ We get $z^2=a_0 y^{2n}~\&~ yz=a_2y^{n+1}$ where $a_0,a_2 \in \mathbb{Z}_2~\&~$  $a_0=0$ if $l<2n.$ Thus the cohomology ring $H^*(X_G)$ is given by $$\mathbb{Z}_2[y,z]/<y^{l+1},z^2+a_0 y^{2n},yz+a_2y^{n+1}>$$ where deg $y=1$ and deg $z=n.$ This realizes possibility (1) when $n<m=l$. For other nontrivial actions we get the same possibility.\\
		For $n<m=l,$ we have similar nontrivial actions as above  when $n=m<l.$ For any of these nontrivial actions it is clear that the  cohomology ring of the orbit space is given by $$\mathbb{Z}_2[y,z]/<y^{n+1},z^2,yz>$$ where deg $y=1$ and deg $z=m.$ This realizes possibility (2) when $n<m=l.$\\
		If $n=m=l$ then it is easy to see that $\pi_1(B_G)$ can act nontrivially on $H^n(X)$ by eighteen ways and $g^*$ must fixes one or two generator(s) of $H^n(X).$ Now, consider a nontrivial action  defined as $g^*(a)=a,g^*(b)=c$ and $g^*(c)=b.$  By Proposition \ref{re}, we have  $E_2^{0,n}\cong \mathbb{Z}_2 \oplus \mathbb{Z}_2,$ generated by $a~\&~ b+c$ and $E_2^{k,n}\cong \mathbb{Z}_2~ \forall ~ k>0,$ generated by $(t^k \otimes 1)a.$ 
		As $G$ acts freely on $X,$  we must have the differential $d_{n+1}: E_{n+1}^{0,n} \rightarrow E_{n+1}^{0,n}$ is nontrivial. If    $d_{n+1}(a)=0$ then two lines survives to infinity, a contradiction. So, $d_{n+1}(a)\neq 0.$  Thus, $E_{n+2}^{*,*}=E_{\infty}^{*,*}$ and hence $E^{k,0}_{\infty}\cong \mathbb{Z}_2, 0 \leq k \leq n; E^{0,n}_{\infty}\cong \mathbb{Z}_2;$ and trivial otherwise. Note that $E_{\infty}^{0,n}$ is generated by $b+c$ or $a+b+c$ accordingly  $d_{n+1}(b+c)$ is zero or nonzero. Consequently, we have 
		\[
		H^j(X_G)=
		\begin{cases}
			\mathbb{Z}_2  & 0 \leq j < n \\
			\mathbb{Z}_2 \oplus \mathbb{Z}_2 & j=n\\
			0 & \text{otherwise}
		\end{cases}
		\]
		Let $y \in E^{1,0}_{\infty}$ and  $u \in E_{\infty}^{0,n}$ be elements  corresponding to permanent cocycles $t \otimes 1 \in E_2^{1,0}$ and $b+c~ \mbox{or}~a+b+c \in E_2^{0,n},$  respectively. We have $u^2=yu=0.$ Thus the total complex Tot$E_{\infty}^{*,*}$ is given by $\mathbb{Z}_2[y,u]/<y^{n+1},u^2,yu>,$ where deg $y=1$ and deg $u=n.$ Let $z \in H^n(X_G)$ corresponds to $u$ such that $i^*(z)=b ~\mbox{or} ~a+b+c.$ Clearly,  the cohomology ring $H^*(X_G)$ is the same as in possibility (2). For other possibilities of nontrivial actions of $\pi_1(B_G)$ on $H^n(X),$ we will get same  cohomological algebra of the orbit space.
	\end{proof}
	\noindent	Note that if $n=m<l$ and  $a_0=a_1=a_2=0$ in possibility (1) of above Theorem then $X/G \sim_{\mathbb{Z}_2} \mathbb{RP}^l \vee \mathbb{S}^n,$ and if $n\leq m=l$ and $a_2=0$ in  possibility (2) then $X/G \sim_{\mathbb{Z}_2} \mathbb{RP}^n \vee \mathbb{S}^m.$\\
	
	\noindent Next, we determine the orbit spaces of free  $G=\mathbb{Z}_p,~ p>2$ a prime, actions on a finite CW-complex $X \sim_{\mathbb{Z}_p} \mathbb{S}^n \vee \mathbb{S}^m \vee \mathbb{S}^l, 1 \leq n \leq m \leq l.$ 
	\begin{lemma}\label{lemma}
		Let $G=\mathbb{Z}_p, ~p>2$ a prime, act freely on a finite  CW-complex  $X \sim_{\mathbb{Z}_p} \mathbb{S}^n \vee \mathbb{S}^m \vee \mathbb{S}^l, 1\leq n \leq m \leq l.$ Then $\pi_1(B_G)$  acts trivially on $H^*(X).$
	\end{lemma}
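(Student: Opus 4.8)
The plan is to argue by contradiction: assume $\pi_1(B_G)$ acts nontrivially on $H^{*}(X;\mathbb{Z}_p)$. In every degree in which $H^{*}(X;\mathbb{Z}_p)$ is one–dimensional the induced action is automatically trivial, since the only element of order dividing the odd prime $p$ in $\mathbb{Z}_p^{\times}$ is $1$; hence nontriviality forces two of $n,m,l$ to coincide. Moreover, as $G$ acts freely the fixed point set is empty, so Floyd's formula (Proposition~\ref{floyd}) gives $p\mid\chi(X)=1+(-1)^{n}+(-1)^{m}+(-1)^{l}$, and since $p$ is odd with $\chi(X)\in\{-2,0,2,4\}$ this forces $\chi(X)=0$. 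In particular $n=m=l$ cannot occur, so it suffices to reach a contradiction in the two (entirely symmetric) cases $n=m<l$ and $n<m=l$; I would carry out the argument for $n=m<l$.

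In this case a generator $g$ of $\pi_1(B_G)$ acts trivially on $H^{0}(X)$ and $H^{l}(X)$ and nontrivially on $V:=H^{n}(X)\cong\mathbb{Z}_p^{2}$. A nontrivial element of order $p$ in $GL(V)$ is unipotent, its minimal polynomial dividing $(x-1)^{p}$, hence it is a single Jordan block; writing $g^{*}=1+N$ with $N\neq 0$, $N^{2}=0$, one gets $\tau=1-g^{*}=-N$, so $\ker\tau=\operatorname{im}\tau\cong\mathbb{Z}_p$ and $\ker\sigma=V$, while $\sigma=\sum_{j=0}^{p-1}(1+jN)=p+\tfrac{p(p-1)}{2}N\equiv 0\pmod p$, so $\operatorname{im}\sigma=0$. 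Feeding this into Proposition~\ref{re}, and using $E_2^{k,i}\cong H^{k}(B_G;\mathbb{Z}_p)\otimes H^{i}(X)$ for the untwisted rows $i=0,l$, I obtain $E_2^{k,i}\cong\mathbb{Z}_p$ for all $k\ge 0$ and all $i\in\{0,n,l\}$, and $E_2^{k,i}=0$ otherwise. Thus the $E_2$–page of the Borel fibration $X\hookrightarrow X_G\to B_G$ is concentrated on the three horizontal lines $i=0,n,l$, each of rank one in every column.

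Finally I would invoke freeness: by Propositions~\ref{tom} and~\ref{prop 4.5}, $H^{j}(X_G;\mathbb{Z}_p)\cong H^{j}(X/G;\mathbb{Z}_p)=0$ for $j>l$, so $E_\infty^{k,i}=0$ whenever $k+i>l$; hence each of the three lines must vanish in every sufficiently large column. The only differentials compatible with this three–line support are $d_{n+1}\colon(\text{line }n)\to(\text{line }0)$, $d_{l-n+1}\colon(\text{line }l)\to(\text{line }n)$ and $d_{l+1}\colon(\text{line }l)\to(\text{line }0)$; each, where nonzero, is an isomorphism of rank–one groups. Since line $0$ can only receive differentials and line $l$ can only emit them, killing line $0$ in large columns forces $d_{n+1}$ or $d_{l+1}$ to be nonzero there, and killing line $l$ forces $d_{l-n+1}$ or $d_{l+1}$ to be nonzero there; a short case analysis on which of these hold and in what order the pages $n+1,\,l-n+1,\,l+1$ occur then shows that one of the three lines always survives to $E_\infty$ in large columns. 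For instance, if $d_{n+1}$ is nonzero it kills lines $n$ and $0$, after which $d_{l-n+1}$ and $d_{l+1}$ have zero target and line $l$ persists; while if $d_{l+1}$ is the only nonzero differential then line $n$ persists. This contradicts the assumed nontriviality, so $\pi_1(B_G)$ acts trivially on $H^{*}(X;\mathbb{Z}_p)$.

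The step I expect to be the crux is that final case analysis: one must track the order in which the pages $n+1$, $l-n+1$, $l+1$ occur — in particular the borderline $l=2n$, where two of the differentials land on the same page and $d^{2}=0$ is needed to rule out both being nonzero — and verify that in every configuration some horizontal line survives to $E_\infty$ in high columns. Everything else (the reduction to $n=m<l$ and $n<m=l$, the identification of the twisted action as a single Jordan block, the vanishing $\operatorname{im}\sigma=0$, and the resulting three–line description of $E_2$) is routine.
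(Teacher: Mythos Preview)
Your argument is correct and follows essentially the same route as the paper's own proof: use Floyd's formula to force $\chi(X)=0$, observe that a nontrivial $\pi_1(B_G)$–action requires two of $n,m,l$ to coincide (hence $n=m<l$ or $n<m=l$, the case $n=m=l$ being excluded by parity), compute via Proposition~\ref{re} that the $E_2$–page consists of three rank–one horizontal lines, and conclude that at least one line survives to $E_\infty$ in high degree, contradicting Proposition~\ref{prop 4.5}. The paper compresses the final step into the single sentence ``It is clear that at least one line in the spectral sequence \ldots\ survive[s] to infinity,'' whereas you spell out the Jordan–block computation of $\ker\tau$ and $\operatorname{im}\sigma$ and sketch the case analysis on $d_{n+1},d_{l-n+1},d_{l+1}$ (including the $l=2n$ collision handled by $d^{2}=0$); this added detail is sound and fills in exactly what the paper leaves implicit.
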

	\begin{proof}
		Clearly, $\chi(F)=0.$ By the Floyd's formula,  $\chi(X)=0.$ Thus, one of  $n,m$ or $l$ must be even and the remaining two are odd.  Suppose that $\pi_1(B_G)= \mathbb{Z}_p$ acts nontrivailly on $H^*(X).$ So,  we must have  either $n=m<l$ or $n<m=l.$ By Proposition \ref{re}, we get $E_2^{k,i}\cong \mathbb{Z}_p~ \forall ~k\geq 0 ~\&~ i=0,n ~\mbox{and}~ l.$ It is clear that atleast one line in the spectral sequence of the fibration $X \hookrightarrow X_G \rightarrow B_G$ survive to infinity, which contradicts  Proposition \ref{prop 4.5}. Hence, our claim. 
	\end{proof}
	\begin{theorem}\label{zp action}
		Let $G = \mathbb{Z}_p,~ p>2$ a prime, act freely on a finite CW-complex $X \sim_{\mathbb{Z}_p} \mathbb{S}^n \vee \mathbb{S}^m \vee \mathbb{S}^l, 1\leq  n \leq m \leq l.$ Then the graded commutative algebra of the orbit space  is one of the following:
		\begin{enumerate}
			\item $\mathbb{Z}_p[x,y,z]/<x^2,y^{\frac{l+1}{2}},z^2-a_0y^n-a_1x^{b_1}y^{\frac{n-{b_1}}{2}}z,y^{\frac{m-n+1}{2}}z-a_2x^{1-{b_2}}y^{\frac{m+{b_2}}{2}}>,$ where deg $x=1, y= \beta (x)~\&~$  deg $z=n,$ either $n$ or $m$ is even, and $b_1,b_2 \in \{0,1\},a_i \in \mathbb{Z}_p, 0\leq i \leq 2,a_0=0$  if $l<2n,a_1=0$ if $m<2n~\&$ $a_2=0$ if $m=l,$  
			\item $\mathbb{Z}_p[x,y,z]/<x^2,y^{\frac{n+1}{2}},z^2-a_0xy^{\frac{n-1}{2}} z,y^{\frac{l-m+1}{2}}z>,$ where deg $x=1, y= \beta (x) ~\&~$   deg $z=m,$ either $m$ or $l$ is even,  and $ a_0 \in \mathbb{Z}_p, a_0=0$ if $n<m$ or $\{l<2n ~\&~ n=m\},$ and 
			\item $\mathbb{Z}_p[x,y,z]/<x^2,y^{\frac{m+1}{2}},z^2-a_0y^n-a_1x^{b_1}y^{\frac{n-b_1}{2}}z,y^{\frac{l-n+1}{2}}z>,$ where deg $x=1, y= \beta (x)~\&~$  deg $z=n,$ either $n$ or $l$ is even,  and $b_1 \in \{0,1\},a_i \in \mathbb{Z}_p, 0\leq i \leq 1,$ $ a_0 =0$ if $m<2n~\&$ $a_1=0$ if $l<2n,$
		\end{enumerate}
		where  $\beta$ is the mod $p$ bockstein cohomology operation associated to the coefficient sequence $0 \rightarrow \mathbb{Z}_p \rightarrow \mathbb{Z}_{{p}^2}\rightarrow \mathbb{Z}_p \rightarrow 0.$
		
	\end{theorem}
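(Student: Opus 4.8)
The plan is to run the Leray--Serre spectral sequence of the Borel fibration $X\stackrel{i}\hookrightarrow X_G\stackrel{\pi}\rightarrow B_G$ exactly as in the proof of Theorem~\ref{thm 4.4}, but now with $H^*(B_G;\mathbb{Z}_p)=\mathbb{Z}_p[x,y]/\langle x^2\rangle$, where $\deg x=1$, $\deg y=2$ and $y=\beta(x)$, in place of $\mathbb{Z}_2[t]$. By Lemma~\ref{lemma}, $\pi_1(B_G)$ acts trivially on $H^*(X)$, so the system of local coefficients is simple and $E_2^{k,i}\cong H^k(B_G;\mathbb{Z}_p)\otimes H^i(X)$. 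Since $G$ acts freely, Proposition~\ref{prop 4.5} forces $H^j(X/G;\mathbb{Z}_p)=0$ for all large $j$, so the spectral sequence does not degenerate; and as in the proof of Lemma~\ref{lemma} (Floyd's formula, Proposition~\ref{floyd}) we have $\chi(X)=0$, so exactly one of $n,m,l$ is even.

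Next I would carry out the differential analysis. By the multiplicative structure the least $r$ with $d_r\neq 0$ is $m-n+1$, $l-m+1$ or $l-n+1$ (so in particular $n=m=l$ is impossible), and these three alternatives produce possibilities (1), (2), (3). In each alternative one checks that a single transgression sends one fiber generator onto the corresponding power of the base class (e.g.\ $d_{m-n+1}(1\otimes b)=t^{m-n+1}\otimes a$ in the first case), which kills that fiber row and truncates the $a$-row; then finiteness of $H^*(X/G)$ forces a second transgression of a remaining fiber generator ($d_{l+1}(1\otimes c)=t^{l+1}\otimes 1$ in case (1); $d_{n+1}(1\otimes a)=t^{n+1}\otimes 1$ in case (2), together with a numerical constraint such as $l\le m+n$; $d_{m+1}(1\otimes b)=t^{m+1}\otimes 1$ in case (3)), after which $E_\infty^{*,*}$ is determined and one reads off the additive structure of $H^*(X_G)$. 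The parity restrictions in the statement arise here: because $x^2=0$, multiplication by the final transgressed base class $t^{s}$ on the $xy^{j}$-part of the killed fiber row vanishes identically unless $t^{s}$ is a pure $y$-power, i.e.\ unless $s$ is even; so finiteness of $H^*(X/G)$ forces $l$ odd in case (1), $n$ odd in case (2), $m$ odd in case (3), and combined with $\chi(X)=0$ this gives the listed conditions (``either $n$ or $m$ is even'', etc.).

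It then remains to recover the algebra structure. Let $x\in H^1(X_G)$ be $\pi^*$ of the degree-one generator of $H^1(B_G)$, let $y=\beta(x)=\pi^*$ of the degree-two generator, and choose $z\in H^n(X_G)$ (resp.\ $H^m(X_G)$) with $i^*(z)$ the relevant fiber generator. From the collapsed spectral sequence one obtains, in the total complex $\mathrm{Tot}\,E_\infty^{*,*}$, the relations $x^2=0$, a vanishing $y$-power ($y^{(l+1)/2}=0$, $y^{(n+1)/2}=0$, or $y^{(m+1)/2}=0$), and a vanishing $y^{k}z$ ($y^{(m-n+1)/2}z=0$, $y^{(l-m+1)/2}z=0$, or $y^{(l-n+1)/2}z=0$); lifting these to $H^*(X_G)$ introduces error terms living in the relevant degrees, which forces the stated normal forms. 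The binary exponents $b_1,b_2\in\{0,1\}$ appear because a monomial $x^{b}y^{k}$ has degree $b+2k$: matching the degree $2n$ of $z^2$ and the degree $m+1$ of $y^{(m-n+1)/2}z$ forces $b_1\equiv n$ and $b_2\equiv m\pmod 2$. A direct count of $\dim_{\mathbb{Z}_p}H^*(X_G)$ in the pertinent degrees then shows $a_0=0$ when $l<2n$ (the class $y^{n}$ already vanishes), $a_1=0$ when $m<2n$, $a_2=0$ when $m=l$, with the analogous statements in cases (2) and (3); finally Proposition~\ref{tom} identifies $H^*(X_G)\cong H^*(X/G)$.

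The main obstacle is this last step: verifying in each of the three cases that no products beyond the listed ones are forced to be nonzero, that $z$ can be normalized so the relations take exactly the stated form, and that the dimension counts justifying the vanishing of the various $a_i$ under the stated inequalities are all correct. The presence of the exterior class $x$ together with its Bockstein $y$ --- absent in the polynomial algebra $H^*(B_{\mathbb{Z}_2})$ of the $p=2$ case --- means one must track the $y^{k}$ and $xy^{k}$ contributions in every degree simultaneously and argue that the resulting ambiguity is absorbed precisely by the parameters $a_i$ and the exponents $b_i$.
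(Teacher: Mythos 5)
Your proposal follows essentially the same route as the paper's proof: Lemma \ref{lemma} to get a simple coefficient system, Floyd's formula to force exactly one of $n,m,l$ even, a three-way case split on the least nontrivial differential, the observation that $x^2=0$ forces the final transgression target to be a pure $y$-power (hence the parity restrictions), and then lifting the relations in $\mathrm{Tot}\,E_\infty^{*,*}$ to $H^*(X_G)$ with indeterminate coefficients $a_i$ and degree-matching exponents $b_i$. The outline is correct and its level of rigor at the final (ring-structure) step is comparable to the paper's own treatment.
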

	\begin{proof}
		By Lemma \ref{lemma},  the $E_2$-term of the Leray-Serre spectral sequence associated to the Borel
		fibration $ X \stackrel{i} \hookrightarrow X_G \stackrel{\pi} \rightarrow B_G$ is given by $E^{k,i}_2= H^{k}(B_G) \otimes H^i(X).$  Since $G$ acts freely on $X,$ $E_2 \neq E_{\infty}$ and one of  $n,m$ or $l$ must be even and the remaining two are odd. It is clear that the least $r$ for which $d_r \neq 0$ must be $~(i)~m-n+1,~(ii)~l-m+1$ or $~(iii)~l-n+1.$ \\
		\textbf{Case(i)} Let $d_{m-n+1}\neq 0.$ If  $l$ is even then  $H^i(X/G) \neq 0$   for odd $  i >l,$  which contradicts   Proposition \ref{prop 4.5}. So, we consider  when $n$ or $m$ is even.\\
		If $n$ is even then  $n<m\leq l.$ We have  $d_{m-n+1}(1 \otimes b)= t^{\frac{m-n+1}{2}}\otimes a$ when $m<l,$ or $\{d_{m-n+1}(1\otimes b)=t^{\frac{m-n+1}{2}}\otimes a ~ \mbox{or}~d_{m-n+1}(1\otimes c)=t^{\frac{m-n+1}{2}}\otimes a ~ \mbox{or } d_{m-n+1}(1\otimes b)=d_{m-n+1}(1\otimes c)=t^{\frac{m-n+1}{2}} \otimes  a\}$ when $m=l.$ Thus, $ E_{m-n+2}^{k,i} \cong \mathbb{Z}_p~\forall ~ k\geq 0~ \&~ i=0,l; ~ \mbox{and }~E_{m-n+2}^{k,n} \cong \mathbb{Z}_p, 0 \leq k \leq m-n.$  Since $G$ act freely on $X,$  $d_{l+1} \neq 0.$ So, we have  $d_{l+1}(1 \otimes c )=t^{\frac{l+1}{2}}\otimes 1$ when $m<l$ or $\{d_{l+1}(1 \otimes c )=t^{\frac{l+1}{2}}\otimes 1~ \mbox{or }~ d_{l+1}(1 \otimes b )=t^{\frac{l+1}{2}}\otimes 1~\mbox{or}~d_{l+1}(1 \otimes (b+c) )=t^{\frac{l+1}{2}}\otimes 1\}$ when $m=l.$  Consequently, $E_{l+2}^{*,*}=E_{\infty}^{*,*}$ and hence $E_{\infty}^{k,0}= \mathbb{Z}_p, 0 \leq k \leq l;$ $ E_{\infty}^{k,n}= \mathbb{Z}_p, 0 \leq k \leq m-n;$ and trivial otherwise. The cohomology groups are given by 	
		\[
		H^j(X_G)=
		\begin{cases}
			\mathbb{Z}_p  & 0 \leq j <  n~\&~m<j \leq l \\
			\mathbb{Z}_p \oplus \mathbb{Z}_p  & n \leq j \leq m\\
			0 & \text{otherwise}
		\end{cases}
		\]
		Let $x \in E^{1,0}_{\infty},~ y \in E_{\infty}^{2,0}$ and   $u \in E_{\infty}^{0,n}$ be elements  corresponding to permanent cocycles $s \otimes 1 \in E_2^{1,0},~ t \otimes 1 \in E_{2}^{2,0}$ and $1 \otimes a \in E_2^{0,n},$   respectively. Clearly, $y^{\frac{l+1}{2}}=u^2=y^{\frac{m-n+1}{2}}u=0.$  Thus the total complex Tot$E_{\infty}^{*,*}$ is given by $\mathbb{Z}_p[x,y,u]/<x^2,y^{\frac{l+1}{2}},u^2,y^{\frac{m-n+1}{2}}u>$ where deg $x=1$ deg $y= 2$ and deg $u=n.$ Let $z \in H^n(X_G)$ corresponds to $u$ such that $i^*(z)=a.$ We get   $z^2=a_0y^n+a_1y^{\frac{n}{2}}z,$ where $a_0,a_1 \in \mathbb{Z}_p.$ Note that $a_0=0$ if $l<2n~\&~ a_1=0 $  if $m<2n$ and $y^{\frac{m-n+1}{2}}z=a_2y^{\frac{m+1}{2}}, a_2 \in \mathbb{Z}_p, a_2=0$ if $m=l.$ Thus the cohomology ring $H^*(X_G)$ is given by $$\mathbb{Z}_p[x,y,z]/<x^2,y^{\frac{l+1}{2}},z^2-a_0y^n-a_1y^{\frac{n}{2}}z,y^{\frac{m-n+{1}}{2}}z-a_2y^{\frac{m+1}{2}}>$$ where deg $x=1,$ deg$ y= 2,$   deg $z=n$ and $ y= \beta (x)$ where $\beta$ is bockstein homomorphism. This realizes possibility (1) by taking $b_1=0,b_2=1.$\\ 
		If $m$ is even then $n<m<l.$ The cohomology groups $H^j(X_G)$ and the total complex Tot$E_{\infty}^{*,*}$ are same as above. Clearly, $z^2=a_0y^{n}+a_1xy^{\frac{n-1}{2}}z,$ where $a_0,a_1 \in \mathbb{Z}_p$ and $a_0=0$ if $l<2n~\&~ a_1=0 $  if $m<2n$ and $y^{\frac{m-n+1}{2}}z=a_2xy^{\frac{m}{2}}, a_2 \in \mathbb{Z}_p.$ Thus the cohomology ring $H^*(X_G)$ is given by $$\mathbb{Z}_p[x,y,z]/<x^2,y^{\frac{l+1}{2}},z^2-a_0y^n-a_1xy^{\frac{n-1}{2}}z,y^{\frac{m-n+1}{2}}z-a_2xy^{\frac{m}{2}}>$$ where deg $x=1, y= \beta (x)$ and deg $z=n.$ This realizes possibility (1) by taking $b_1=1,b_2=0.$ \\ 
		\textbf{Case(ii)} Let $d_{l-m+1}\neq 0.$ Clearly, $n$ cannot be even and hence,  either $m$  or $l$ is even.\\
		If $l$ is even then $n\leq m <l.$ We have $d_{l-m+1}(1 \otimes c)= t^{\frac{l-m+1}{2}}\otimes b$ when $n<m$ or $\{d_{l-m+1}(1 \otimes c)= t^{\frac{l-m+1}{2}}\otimes b~\mbox{or}~d_{l-m+1}(1 \otimes c)= t^{\frac{l-m+1}{2}}\otimes a\}$ when $n=m.$ Thus, $\{ E_{l-m+2}^{k,i} \cong \mathbb{Z}_2~\forall ~ k\geq 0~\&~ i=0,n;$ ~and ~ $E_{l-m+2}^{k,m} \cong \mathbb{Z}_p, 0 \leq k \leq l-m\}$ when $n< m,$ or $\{ E_{l-m+2}^{k,0} \cong \mathbb{Z}_p~\forall ~ k\geq 0;$ $E_{l-m+2}^{k,m} \cong \mathbb{Z}_p\oplus \mathbb{Z}_p, 0 \leq k \leq l-m;$ and  $E_{l-m+2}^{k,m} \cong \mathbb{Z}_2 ~ \forall~ k > l-m\}$ when $n=m.$ Since $G$ act freely on $X,$ $l\leq m+n$ and $d_{n+1}\neq 0.$ So, we have  $d_{n+1}(1 \otimes a )=t^{\frac{n+1}{2}}\otimes 1$ when $n<m$ or $\{d_{n+1}(1 \otimes a )=t^{\frac{n+1}{2}}\otimes 1~ \mbox{or}~d_{n+1}(1 \otimes b )=t^{\frac{n+1}{2}}\otimes 1\}$ when $n=m.$ Consequently, $E_{n+2}^{*,*}=E_{\infty}^{*,*}$ and hence $E_{\infty}^{k,0}= \mathbb{Z}_p, 0 \leq k \leq n;$ $ E_{\infty}^{k,m}= \mathbb{Z}_p, 0 \leq k \leq l-m;$ and trivial otherwise. We have
		\[
		H^j(X_G)=
		\begin{cases}
			\mathbb{Z}_p  & 0 \leq j <  n~\&~m+1\leq j \leq l \\
			\mathbb{Z}_p  & j=n,m ~\& ~n < m\\
			\mathbb{Z}_p \oplus \mathbb{Z}_p & j=n ~\&~ n=m\\ 
			0 & \text{otherwise}
		\end{cases}
		\]
		Let  $x \in E^{1,0}_{\infty} ~\&~ y \in E_{\infty}^{2,0}$ and $u \in E_{\infty}^{0,m}$ be elements corresponding to permanent cocycles  $s \otimes 1 \in E_2^{1,0},~ t \otimes 1 \in E_{2}^{2,0}$ and $\{(1 \otimes b)~ \mbox{or}~(1 \otimes a)\} \in E_2^{0,m},$ respectively. Clearly, $x^2=y^{\frac{n+1}{2}}=u^2=y^{\frac{l-m+1}{2}}u=0.$  Thus the total complex Tot$E_{\infty}^{*,*}$ is given by $\mathbb{Z}_p[x,y,u]/<x^2,y^{\frac{n+1}{2}},u^2,y^{\frac{l-m+1}{2}}u>$ where deg $x=1,y= \beta (x)$ and deg $u=m.$ Let $z \in H^m(X_G)$ corresponds to $u$ such that $i^*(z)=b$ or $a.$ Note that $z^2=a_0xy^{\frac{n-1}{2}} z,~ a_0\in \mathbb{Z}_p$ where $ a_0=0$ if $n< m$ or $\{n=m~\&~l<2n \}.$ Thus the cohomology ring $H^*(X_G)$ is given by $$\mathbb{Z}_p[x,y,z]/<x^2,y^{\frac{n+1}{2}},z^2-a_0xy^{\frac{n-1}{2}} z,y^{\frac{l-m+1}{2}}z>$$ where deg $x=1, y= \beta (x)$ and deg $z=m.$ This realizes possibility (2).\\
		If $m$ is even then $n<m<l.$ The cohomology groups $H^j(X_G)$ and the total complex Tot$E_{\infty}^{*,*}$ are same as above. Clearly the cohomology ring $H^*(X_G)$ is given by $$\mathbb{Z}_p[x,y,z]/<x^2,y^{\frac{n+1}{2}},z^2,y^{\frac{l-m+1}{2}}z>$$ where deg $x=1, y= \beta (x)$ and deg $z=m.$ This realizes possibility (2) . \\
		\textbf{Case(iii)} Let $d_{l-n+1}\neq 0.$ In this case, $m$ is not even and hence,  either $n$  or $l$ is  even.\\ If  $n$ is even then $n<m\leq l.$ We have $d_{l-n+1}(1 \otimes c)= t^{\frac{l-n+1}{2}}\otimes a$ when $m<l$ or  $\{d_{l-n+1}(1 \otimes c)= t^{\frac{l-n+1}{2}}\otimes a~\mbox{or}~d_{l-n+1}(1 \otimes b)= t^{\frac{l-n+1}{2}}\otimes a\}$ when $m=l.$ Thus, $ E_{l-n+2}^{k,i} \cong \mathbb{Z}_p~\forall ~ k\geq 0~ \&~ i=0,m; ~ \mbox{and }~E_{l-n+2}^{k,n} \cong \mathbb{Z}_p, 0 \leq k \leq l-n.$ As $G$ act freely on $X,$ $l\leq m+n$ and $d_{m+1} \neq 0.$ So, we have   $d_{m+1}(1 \otimes b )=t^{\frac{m+1}{2}}\otimes 1$ when $m<l$ or  \{$d_{m+1}(1 \otimes b )=t^{\frac{m+1}{2}}\otimes 1$ or $d_{m+1}(1 \otimes c )=t^{\frac{m+1}{2}}\otimes 1$\} when $m=l.$ Therefore, $E_{m+2}^{*,*}=E_{\infty}^{*,*}$ and hence $E_{\infty}^{k,0}= \mathbb{Z}_p, 0 \leq k \leq m;$ $ E_{\infty}^{k,n}= \mathbb{Z}_p, 0 \leq k \leq l-n;$ and trivial otherwise. Thus, we have 	
		\[
		H^j(X_G)=
		\begin{cases}
			\mathbb{Z}_p  & 0 \leq j <  n~\&~m<j \leq l \\
			\mathbb{Z}_p \oplus \mathbb{Z}_p  & n \leq j \leq m\\
			0 & \text{otherwise}
		\end{cases}
		\]
		Let  $x \in E^{1,0}_{\infty},~ y \in E_{\infty}^{2,0}$  and $u \in E_{\infty}^{0,n}$ be  elements corresponding to permanent cocycle $s \otimes 1 \in E_2^{1,0},~ t \otimes 1 \in E_{2}^{2,0}$  and $1 \otimes a \in E_2^{0,n},$ respectively. Clearly, $x^2=y^{\frac{m+1}{2}}=u^2=y^{\frac{l-n+1}{2}}u=0.$   Thus the total complex Tot$E_{\infty}^{*,*}$ is given by $\mathbb{Z}_p[x,y,u]/<x^2,y^{\frac{m+1}{2}},u^2,y^{\frac{l-n+1}{2}}u>$ where deg $x=1,y= \beta (x)$ and deg $u=n.$ Let $z \in H^n(X_G)$ corresponds to $u$ such that $i^*(z)=a.$ We get $z^2=a_0y^n+a_1y^{\frac{n}{2}}z,$  $a_0,a_1 \in \mathbb{Z}_p$ where $a_0=0$ if $m<2n~\&~ a_1=0 $  if $l<2n.$ Thus the cohomology ring $H^*(X_G)$ is given by $$\mathbb{Z}_p[x,y,z]/<x^2,y^{\frac{m+1}{2}},z^2-a_0y^n-a_1y^{\frac{n}{2}}z,y^{\frac{l-n+1}{2}}z>$$ where deg $x=1, y= \beta (x)$ and deg $z=n.$ This realizes case (3) by taking $b_1=0.$\\ 
		If $l$ is even then we consider $n< m<l.$ The cohomology groups $H^j(X_G)$ and the total complex Tot$E_{\infty}^{*,*}$ are same as above. Clearly, $z^2=a_0y^n+a_1xy^{\frac{n-1}{2}}z,$  $a_0,a_1 \in \mathbb{Z}_p$ where $a_0=0$ if $m<2n~\&~ a_1=0 $  if $l<2n.$ Thus the cohomology ring $H^*(X_G)$ is given by $$\mathbb{Z}_p[x,y,z]/<x^2,y^{\frac{m+1}{2}},z^2-a_0y^n-a_1xy^{\frac{n-1}{2}}z,y^{\frac{l-n+1}{2}}z>$$ where deg $x=1, y= \beta (x)$ and deg $z=n.$ This realizes possibility (3) by taking $b_1=1.$	
	\end{proof}
	\noindent Now, we determine the orbit spaces of free actions of  $G=\mathbb{S}^1$ on a finite CW-complex $X \sim_{\mathbb{Q}} \mathbb{S}^n \vee \mathbb{S}^m \vee \mathbb{S}^l, 1 \leq n \leq m \leq l.$

	\begin{theorem}\label{thm 4.7}
		Let $G=\mathbb{S}^1$ act freely on a finite  CW-complex  $X \sim_{\mathbb{Q}} \mathbb{S}^n \vee \mathbb{S}^m \vee \mathbb{S}^l, 1\leq n \leq m \leq l.$ Then the graded commutative algebra of the orbit space  is one of the following:
		\begin{enumerate}
			\item $\mathbb{Q} [y,z]/<y^{\frac{l+1}{2}},z^2-a_0y^{n}-a_1y^{\frac{n}{2}}z,y^{\frac{m-n+1}{2}}z-a_2y^{\frac{m+1}{2}}>,$ where deg $y=2,$ deg $z=n,$ $m-n ~\&~ l$ are odd, and $a_0,a_1,a_2 \in \mathbb{Q},a_0=0$ if $l<2n,a_1=0$ if $n$ is odd or $m<2n~\&$  $a_2=0$ if $m$ is even or $m=l,$
			\item $\mathbb{Q} [y,z]/<y^{\frac{n+1}{2}},z^2,y^{\frac{l-m+1}{2}}z>,$  where deg $y=2,$  deg $z=m,$ $l-m~\&~ n$ are odd, and 
			\item $\mathbb{Q}[y,z]/<y^{\frac{m+1}{2}},z^2-a_0y^{n}-a_1y^{\frac{n}{2}}z,y^{\frac{l-n+1}{2}}z>,$ where deg $y=2,$ deg $z=n,$ $l-n ~\&~ m$ are odd, and  $a_0,a_1 \in \mathbb{Q},a_0=0$ if $m<2n~\&$ $a_1=0$ if $n$ is odd or $l<2n.$ 
			
		\end{enumerate}
	\end{theorem}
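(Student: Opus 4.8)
The plan is to compute $H^*(X/G;\mathbb{Q})$ from the Leray--Serre spectral sequence of the Borel fibration $X\stackrel{i}\hookrightarrow X_G\stackrel{\pi}\rightarrow B_G$, running it exactly as in the proofs of Theorems \ref{thm 4.4} and \ref{zp action}. The present case is the cleanest of the three: since $B_G\simeq\mathbb{CP}^{\infty}$ we have $H^*(B_G;\mathbb{Q})=\mathbb{Q}[t]$ with $\deg t=2$, and $\pi_1(B_G)$ is automatically trivial, so no analogue of Lemma \ref{lemma} is required; consequently the local coefficient system is simple and $E_2^{k,i}=H^k(B_G;\mathbb{Q})\otimes H^i(X;\mathbb{Q})$, supported on the rows $i=0,n,m,l$. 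Since $G$ acts freely, Proposition \ref{tom} gives $X_G\simeq X/G$, and Proposition \ref{prop 4.5} forces $H^i(X/G;\mathbb{Q})=0$ for $i>l$; hence the spectral sequence does not degenerate, and, as $X\to X/G$ is a principal $\mathbb{S}^1$-bundle, $\chi(X)=0$, so exactly one of $n,m,l$ is even and the other two are odd.

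First I would identify the first nonzero differential. As in the earlier proofs, the multiplicative structure together with the vanishing of $H^*(X/G;\mathbb{Q})$ above degree $l$ shows that $n=m=l$ is impossible and that the least $r$ with $d_r\neq0$ lies in $\{m-n+1,\;l-m+1,\;l-n+1\}$; moreover $r$ must be even, because $H^{\mathrm{odd}}(B_G;\mathbb{Q})=0$. This produces three cases, yielding possibilities (1), (2), (3), carried out precisely as Subcases (i)--(iii) of Theorem \ref{thm 4.4}.

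In Case (i), $d_{m-n+1}\neq0$, so $m-n$ is odd; by multiplicativity $d_{m-n+1}\colon E_{m-n+1}^{k,m}\to E_{m-n+1}^{k+m-n+1,\,n}$ is onto for all $k$, which kills the $m$-row and truncates the $n$-row, and then Proposition \ref{prop 4.5} forces $d_{l+1}(1\otimes c)=t^{(l+1)/2}\otimes1$, so $l$ is odd and $E_{l+2}=E_\infty$. Reading off $E_\infty$ gives the additive structure together with a total complex $\mathbb{Q}[y,u]/\langle y^{(l+1)/2},\,u^2,\,y^{(m-n+1)/2}u\rangle$, $\deg y=2$, $\deg u=n$; taking $z\in H^n(X/G)$ corresponding to $u$ with $i^*(z)$ a generator of $H^n(X)$ and solving the extension problem for $z^2\in H^{2n}(X/G)$ and $y^{(m-n+1)/2}z\in H^{m+1}(X/G)$ yields possibility (1), with $a_0=0$ when $l<2n$ (then $y^n=0$), $a_1=0$ when $n$ is odd (then no term $y^{n/2}z$ exists) or $m<2n$, and $a_2=0$ when $m$ is even or $m=l$. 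Cases (ii) and (iii) run identically: $d_{l-m+1}\neq0$ forces $l-m$ odd, then $l\leq m+n$ and $d_{n+1}(1\otimes a)=t^{(n+1)/2}\otimes1$ with $n$ odd, giving possibility (2) with $z^2=0$ and $y^{(l-m+1)/2}z=0$; $d_{l-n+1}\neq0$ forces $l-n$ odd, then $l\leq m+n$ and $d_{m+1}(1\otimes b)=t^{(m+1)/2}\otimes1$ with $m$ odd, giving possibility (3). Inside each case one separates the sub-possibilities $n=m$, $n<m$, $m<l$, $m=l$ as in Theorem \ref{thm 4.4}, but the resulting graded algebra is the stated one, and $H^*(X_G;\mathbb{Q})\cong H^*(X/G;\mathbb{Q})$ by Proposition \ref{tom}.

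The bulk of the work is the bookkeeping of dimensions, parities and vanishing coefficients, which is routine. The one genuinely delicate point---as already in Theorems \ref{thm 4.4} and \ref{zp action}---is the multiplicative extension problem: $E_\infty$ determines only the associated graded of $H^*(X/G;\mathbb{Q})$, so to pin down $z^2$ and the products $y^{\bullet}z$ one must combine the edge homomorphism $i^*$ with the parity of $n$ (which decides whether a term $y^{n/2}z$ can occur at all) and with the position of the generator of $H^{m+1}$, respectively $H^{2n}$, relative to the powers of the Euler class $y$. A subsidiary point requiring care is to verify that no further differential can intervene between the first nonzero one and the terminal $d_{l+1}$ (respectively $d_{n+1}$, $d_{m+1}$); this is where Proposition \ref{prop 4.5}, together with the inequality $l\leq m+n$ in Cases (ii) and (iii), is used.
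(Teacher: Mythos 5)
Your proposal is correct and follows essentially the same route as the paper: simple coefficients over $\mathbb{Q}[t]$, identification of the first nonvanishing differential among $d_{m-n+1}$, $d_{l-m+1}$, $d_{l-n+1}$ (forced to have even index), use of Proposition \ref{prop 4.5} to pin down the terminal differential $d_{l+1}$, $d_{n+1}$ or $d_{m+1}$, and then the multiplicative extension problem for $z^2$ and $y^{\bullet}z$ with the same vanishing conditions on the $a_i$. The only cosmetic difference is that you make the Euler-characteristic parity count and the evenness of the first differential index explicit up front, whereas the paper extracts the parities case by case.
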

	\begin{proof}
		As $\pi_1(B_G)=1$ and $H^*(B_G)$ is torsion free, the $E_2$-term of the Leray-Serre spectral sequence associated to the Borel
		fibration $ X \stackrel{i} \hookrightarrow X_G \stackrel{\pi} \rightarrow B_G$ is given by $E^{k,i}_2= H^{k}(B_G) \otimes H^i(X).$ Since $G$ acts freely on $X,$ we get $E_2^{*,*} \neq E_{\infty}^{*,*}.$ Clearly, the least $r$ for which $d_r\neq 0$ is one of the following: (i) $m-n+1,~$ (ii) $l-m+1$ or~ (iii)~ $l-n+1.$\\
		\textbf{Case(i)} Let $d_{m-n+1} \neq 0.$ Then  $m-n$ must be odd and $d_{m-n+1}(1 \otimes b)= t^{\frac{m-n+1}{2}} \otimes a $ when $m<l$ or \{$d_{m-n+1}(1 \otimes b)= t^{\frac{m-n+1}{2}} \otimes a $ or $d_{m-n+1}(1 \otimes c)= t^{\frac{m-n+1}{2}} \otimes a $ or $d_{m-n+1}(1 \otimes b)= d_{m-n+1}(1 \otimes c)= t^{\frac{m-n+1}{2}} \otimes a$\} when $m=l.$ By Proposition \ref{prop 4.5}, $l$ is odd and $d_{l+1}\neq 0.$  So, we have   $d_{l+1}(1 \otimes c)=t^{\frac{l+1}{2}}\otimes 1$ when $m<l$ or  \{$d_{l+1}(1 \otimes c)=t^{\frac{l+1}{2}}\otimes 1$ or $d_{l+1}(1 \otimes b)=t^{\frac{l+1}{2}}\otimes 1$ or $d_{l+1}(1 \otimes (b+c))=t^{\frac{l+1}{2}}\otimes 1$  \} when $m=l.$  Consequently, $E_{l+2}^{*,*}= E^{*,*}_{\infty}$ and hence $E^{k,0}_{\infty}= \mathbb{Q}, 0 \leq k ~\mbox{even} < l; E^{k,n}_{\infty}= \mathbb{Q},0 \leq k~\mbox{even} < m-n;$ and trivial otherwise.  
		For $n$ even, we get	
		\[
		H^j(X_G)=
		\begin{cases}
			\mathbb{Q}  & 0 \leq j ~\mbox{even}<  n~\&~m<j~\mbox{even} < l \\
			\mathbb{Q} \oplus \mathbb{Q}  & n\leq j ~\mbox{even} < m\\
			0 & \text{otherwise}
		\end{cases}
		\] 
		For $n$ odd, we get 
		\[
		H^j(X_G)=
		\begin{cases}
			\mathbb{Q}  & 0 \leq j ~\mbox{even} <  n,n\leq j  <m ~\&~m\leq j~\mbox{even} < l \\
			
			0 & \text{otherwise}
		\end{cases}
		\] 
		Let $y \in E_{\infty}^{2,0}$  and $u \in E_{\infty}^{0,n} $ be elements corresponding to the permanent cocyles $t\otimes 1 $ and $1\otimes a,$ respectively.  Clearly, $y^{\frac{l+1}{2}}=u^2=y^{\frac{m-n+1}{2}}u=0.$ Then the total complex is given by Tot$E_{\infty}^{*,*}=\mathbb{Q}[y,u]/<y^{\frac{l+1}{2}},u^2,y^{\frac{m-n+1}{2}}u>$ where deg $y=2$ and deg $u=n.$ We get an element $z \in H^n(X_G)$ corresponding to $u$ such that $i^*(z)=a.$ Clearly, $z^2=a_0y^{n}+a_1y^{\frac{n}{2}}z ~\&~ y^{\frac{m-n+1}{2}}z=a_2y^{\frac{m+1}{2}},$ $a_0,a_1~\&~a_2 \in \mathbb{Q},$ where $a_0=0$ if $ l <2n,$ $a_1=0$ if $n$ is odd or $m<2n$ and $a_2=0$ if $m$ is even or $m=l.$ Thus the cohomology ring $H^*(X/G)$ is given by $$\mathbb{Q}[y,z]/<y^{\frac{l+1}{2}},z^2-a_0y^{n}-a_1y^{\frac{n}{2}}z,y^{\frac{m-n+1}{2}}z-a_2y^{\frac{m+1}{2}}>$$ where deg $y=2$ and deg $z=n.$   This realizes possibility (1).\\
		\textbf{Case(ii)} Let $d_{l-m+1}\neq 0.$ Then   $l-m $ must be an odd and $d_{l-m+1}(1 \otimes c)= t^{\frac{l-m+1}{2}} \otimes b$ when $n<m$ or \{$d_{l-m+1}(1 \otimes c)= t^{\frac{l-m+1}{2}} \otimes b$  or $d_{l-m+1}(1 \otimes c)= t^{\frac{l-m+1}{2}} \otimes a$\} when $n=m.$  As $G$ acts freely on $X,$ $l\leq n+m$ and  $d_{n+1} \neq 0.$  So, $n$ must be odd, and $d_{n+1}(1 \otimes a)=t^{\frac{n+1}{2}} \otimes 1$ when $n<m$ or \{$d_{n+1}(1 \otimes a)=t^{\frac{n+1}{2}} \otimes 1$ or $d_{n+1}(1 \otimes b)=t^{\frac{n+1}{2}} \otimes 1$\} when $n=m.$ Consequently, $E_{n+2}^{*,*}= E_{\infty}^{*,*}$ and  hence $E_{\infty}^{k,0}= \mathbb{Q}$ for $0 \leq k ~\mbox{even}<n;$  $E_{\infty}^{k,m}=\mathbb{Q}$ for $ 0 \leq k ~\mbox{even}< l-m;$ and trivial otherwise.
		For $m$ odd, we get 
		\[
		H^j(X_G)=
		\begin{cases}
			\mathbb{Q}  & 0 \leq j ~\mbox{even}<  n~\&~m\leq j ~\mbox{odd} <l  \\
			
			0 & \text{otherwise}
		\end{cases}
		\] 
		For $m$ even, we get
		\[
		H^j(X_G)=
		\begin{cases}
			\mathbb{Q}  & 0 \leq j ~\mbox{even}<  n~\&~m\leq j ~\mbox{even} <l\\
			
			0 & \text{otherwise}
		\end{cases}
		\] 
		Let  $y \in E_{\infty}^{2,0}$ and $u \in E_{\infty}^{0,m} $  be elements corresponding to the permanent cocyles $t\otimes 1$ and  \{$1\otimes a$ or $1 \otimes b$\},   respectively. Clearly, $y^{\frac{n+1}{2}}=u^2=y^{\frac{l-m+1}{2}}u=0.$ Then the total complex is given by Tot$E_{\infty}^{*,*}=\mathbb{Q}[y,u]/<y^{\frac{n+1}{2}},u^2,y^{\frac{l-m+1}{2}}u>$ where deg $y=2$ and deg $u=m.$ We get an element $z \in H^m(X_G)$ corresponding to $u$ such that $i^*(z)=b$ or $a$. Thus the cohomology ring $H^*(X/G)$ is given by $$\mathbb{Q}[y,z]/<y^{\frac{n+1}{2}},z^2,y^{\frac{l-m+1}{2}}z>$$ where deg $y=2$ and deg $z=m.$  This realizes possibility (2).\\
		$\textbf{Case(iii)}$ Let $d_{l-n+1}\neq 0.$ Then $l-n $ must be  odd and we consider $n<m<l.$ We have  $d_{l-m+1}(1 \otimes c)= t^{\frac{l-n+1}{2}} \otimes a.$ As $G$ acts freely on $X,$ $l\leq n+m$ and $d_{m+1}\neq 0.$ So, $m$ must be odd, and $d_{m+1}(1 \otimes b)=t^{\frac{m+1}{2}} \otimes 1.$ Consequently, $E_{m+2}^{*,*}= E_{\infty}^{*,*}$ and hence $E_{\infty}^{k,0}= \mathbb{Q}$ for $0 \leq k ~\mbox{even}<m;$ $E_{\infty}^{k,n}=\mathbb{Q}$ for $ 0 \leq k ~\mbox{even}< l-n;$ and trivial otherwise.  For $n$ odd, we get
		\[
		H^j(X_G)=
		\begin{cases}
			\mathbb{Q}  & 0 \leq j ~\mbox{even}<  n,n\leq j < m ~\&~ m<j ~\mbox{odd}<l \\
			
			0 & \text{otherwise}
		\end{cases}
		\] 
		For $n$ even, we get 
		\[
		H^j(X_G)=
		\begin{cases}
			\mathbb{Q}  & 0 \leq j ~\mbox{even}<  n~\&~m < j ~\mbox{even} <l\\
			\mathbb{Q} \oplus \mathbb{Q}  & n\leq j ~\mbox{even} < m\\
			0 & \text{otherwise}
		\end{cases}
		\] 
		Let $y \in E_{\infty}^{2,0}$ and $u\in E_{\infty}^{0,n} $ be elements corresponding to the permanent cocyles  $t\otimes 1$  and $1\otimes a,$ respectively. Clearly $y^{\frac{m+1}{2}}=u^2=y^{\frac{l-n+1}{2}}u=0.$  Then the total complex is given by Tot$E_{\infty}^{*,*}=\mathbb{Q}[y,u]/<y^{\frac{m+1}{2}},u^2,y^{\frac{l-n+1}{2}}u>$ where deg $y=2$ and deg $u=n.$ We get an element $z \in H^m(X_G)$ corresponding to $u$ such that $i^*(z)=a.$ Clearly, $z^2=a_0y^n+a_1y^{\frac{n}{2}}z,$ $a_0,a_1~\&~a_2 \in \mathbb{Q}$ where $a_0=0$ if $m<2n,$  $a_1=0$ if $n$ is odd  or $l<2n.$ Thus the cohomology ring $H^*(X/G)$ is given by $$\mathbb{Q}[y,z]/<y^{\frac{m+1}{2}},z^2-a_0y^n-a_1y^{\frac{n}{2}}z,y^{\frac{l-n+1}{2}}z>$$ where deg $y=2$ and deg $z=n.$  This realizes possibility (3).
	\end{proof}
	
	\noindent Finally, we determine the orbit spaces of free actions of   $G=\mathbb{S}^3$ on a finite CW-complex $X \sim_{\mathbb{Q}} \mathbb{S}^n \vee \mathbb{S}^m \vee \mathbb{S}^l, 1 \leq n \leq m \leq l.$ Here, we assume that the associated sphere bundle $G \hookrightarrow X \rightarrow X/G$ is orientable. 
	\begin{theorem} \label{thm 4.8}
		Let $G=\mathbb{S}^3$ act freely on a finite  CW-complex  $X \sim_{\mathbb{Q}} \mathbb{S}^n \vee \mathbb{S}^m \vee \mathbb{S}^l, 1\leq n \leq m \leq l.$   Then the graded commutative algebra of the orbit space  is one of the following:
		\begin{enumerate}
			\item $\mathbb{Q} [y,z]/<y^{\frac{l+1}{4}},z^2-a_0y^{\frac{n}{2}}-a_1y^{\frac{n}{4}}z,y^{\frac{m-n+1}{4}}z-a_2y^{\frac{m+1}{4}}>,$ where  deg $y=4,$ deg $z=n,$  $m-n \equiv 3$(mod $4$), $ l\equiv 3$(mod $4$), and  $a_0,a_1,a_2 \in \mathbb{Q},$ $a_0=0$ if $n$ is odd or $l<2n,$ $a_1=0$ if $n\not\equiv 0$(mod $4$) or $m<2n~\&~a_2=0$ if $m+1 \not\equiv 0$(mod $4$) or $m=l,$ 
			\item $\mathbb{Q} [y,z]/<y^{\frac{n+1}{4}},z^2,y^{\frac{l-m+1}{4}}z>$  where deg $y=4,$  deg $=m$ and $l-m \equiv 3$(mod $4$), $n\equiv 3$(mod $4$), and
			\item $\mathbb{Q}[y,z]/<y^{\frac{m+1}{4}},z^2-a_0y^{\frac{n}{2}}-a_1y^{\frac{n}{4}}z,y^{\frac{l-n+1}{4}}z>,$ where deg $y=4,$ deg $=n,$ $l-n\equiv 3$(mod $4$), $ m\equiv 3$(mod $4$) and $a_0,a_1 \in \mathbb{Q},$ $a_0=0$ if $n$ is odd or $m<2n~\&~$  $a_1=0$ if $n \not\equiv 0$(mod $4$) or $l<2n.$ 
			
		\end{enumerate}
	\end{theorem}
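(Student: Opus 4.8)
The plan is to run the Leray--Serre spectral sequence of the Borel fibration $X \stackrel{i}{\hookrightarrow} X_G \stackrel{\pi}{\rightarrow} B_G$ exactly as in the proof of Theorem \ref{thm 4.7}, the only structural change being that $B_G \simeq \mathbb{HP}^{\infty}$, so that $H^*(B_G;\mathbb{Q}) \cong \mathbb{Q}[y]$ with $\deg y = 4$. Since $\pi_1(B_G)=1$ the system of local coefficients is simple and $E_2^{k,i} = H^k(B_G) \otimes H^i(X)$, which is nonzero only when $4 \mid k$ and $i \in \{0,n,m,l\}$. Because $G$ acts freely and the associated sphere bundle $G \hookrightarrow X \to X/G$ is assumed orientable, the $\mathbb{S}^3$-analogue of Proposition \ref{prop 4.5} (see \cite{kaur}) gives $H^i(X/G;\mathbb{Q})=0$ for $i>l$; together with Proposition \ref{tom}, which identifies $H^*(X_G)$ with $H^*(X/G)$, this forces $E_2^{*,*}\neq E_\infty^{*,*}$, so $d_r\neq 0$ for some $r\geq 2$.

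The next step is to pin down which differentials can be nontrivial. Since the nonvanishing columns of $E_2$ sit only in degrees divisible by $4$, a nonzero differential $d_r\colon E_r^{k,i}\to E_r^{k+r,\,i-r+1}$ forces $r\equiv 0\pmod 4$; and since $H^*(X)$ is concentrated in degrees $0,n,m,l$, the generators $1\otimes a,\,1\otimes b,\,1\otimes c$ can only be involved through $d_{n+1},\,d_{m+1},\,d_{l+1}$ (transgressions into the base) or $d_{m-n+1},\,d_{l-m+1},\,d_{l-n+1}$. Running the same argument as in Theorem \ref{thm 4.7} --- if the first nonvanishing differential were $d_{n+1}$, $d_{m+1}$ or $d_{l+1}$, then by the multiplicative structure infinitely many classes $t^j\otimes b$ or $t^j\otimes c$ would survive to $E_\infty$, contradicting the finite dimensionality of $X/G$ --- the least $r$ with $d_r\neq 0$ must be $m-n+1$, $l-m+1$ or $l-n+1$, giving Cases (i), (ii), (iii). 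In Case (i) one has $d_{m-n+1}(1\otimes b)=t^{\frac{m-n+1}{4}}\otimes a$, forcing $m-n\equiv 3\pmod 4$, and freeness then forces $d_{l+1}(1\otimes c)=t^{\frac{l+1}{4}}\otimes 1$, whence $l\equiv 3\pmod 4$; Cases (ii) and (iii) are symmetric, yielding $l-m\equiv 3,\ n\equiv 3$ and $l-n\equiv 3,\ m\equiv 3$ modulo $4$ respectively. In each case $E_{l+2}^{*,*}$ (resp. $E_{n+2}^{*,*}$, $E_{m+2}^{*,*}$) already equals $E_\infty^{*,*}$, which determines the additive structure of $H^*(X/G;\mathbb{Q})$.

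It then remains to resolve the multiplicative extension, exactly as in Theorems \ref{thm 4.7} and \ref{zp action}. In Case (i) let $y\in E_\infty^{4,0}$ be the class of $t\otimes 1$ and $u\in E_\infty^{0,n}$ the class of the surviving fibre generator $1\otimes a$; then $\mathrm{Tot}\,E_\infty^{*,*}\cong \mathbb{Q}[y,u]/\langle y^{\frac{l+1}{4}},\,u^2,\,y^{\frac{m-n+1}{4}}u\rangle$ with $\deg y=4$, $\deg u=n$. Picking $z\in H^n(X/G)$ with $i^*(z)=a$, the ring structure of $H^*(X/G)$ is governed by the cup products $z^2$ and $y^{\frac{m-n+1}{4}}z$: by multiplicativity and a degree count these equal $z^2=a_0y^{\frac{n}{2}}+a_1y^{\frac{n}{4}}z$ and $y^{\frac{m-n+1}{4}}z=a_2y^{\frac{m+1}{4}}$ for some $a_0,a_1,a_2\in\mathbb{Q}$, each $a_i$ being forced to vanish unless the monomial it multiplies lies in a nonzero group --- so $a_0=0$ if $n$ is odd or $l<2n$, $a_1=0$ if $n\not\equiv 0\pmod 4$ or $m<2n$, and $a_2=0$ if $m+1\not\equiv 0\pmod 4$ or $m=l$. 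This gives presentation (1); Cases (ii) and (iii) are treated identically and give (2) and (3). The main obstacle I anticipate is organisational: carefully handling the degenerate configurations $n=m$ and $m=l$ (where several fibre generators share a degree and one must track which linear combination transgresses), and, in the extension step, correctly separating the parameters $a_i$ that are genuinely free over $\mathbb{Q}$ from those annihilated by the $\bmod\ 4$ congruences --- the very subtlety that makes Theorem \ref{thm 4.8} more delicate than its $\mathbb{S}^1$ counterpart.
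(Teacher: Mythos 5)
Your proposal follows essentially the same route as the paper: the Leray--Serre spectral sequence of the Borel fibration with $H^*(B_{\mathbb{S}^3};\mathbb{Q})\cong\mathbb{Q}[y]$, $\deg y=4$, the trichotomy of first nontrivial differentials $d_{m-n+1}$, $d_{l-m+1}$, $d_{l-n+1}$ forcing the mod $4$ congruences, collapse at $E_{l+2}$ (resp.\ $E_{n+2}$, $E_{m+2}$), and the resolution of the multiplicative extensions $z^2$ and $y^{\ast}z$ by degree counting, with the same vanishing conditions on the coefficients $a_i$. The argument and its level of detail (including the asserted exclusion of transgressions as the first nonzero differential) match the paper's proof, so the proposal is correct.
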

	\begin{proof}
		It is clear that the $E_2$-term of the Leray-Serre spectral sequence associated to the Borel
		fibration $ X \stackrel{i} \hookrightarrow X_G \stackrel{\pi} \rightarrow B_G$ is given by $E^{k,i}_2= H^{k}(B_G) \otimes H^i(X),$ and $E_2^{*,*} \neq E_{\infty}^{*,*}.$ The least $r$ for which $d_r\neq 0$ must be one of the following: (i) $m-n+1,~$ (ii) $l-m+1$ or~ (iii)~ $l-n+1.$\\
		\textbf{Case(i)} Let $d_{m-n+1} \neq 0.$ Then we must have $m-n \equiv 3$(mod $4$) and $d_{m-n+1}(1 \otimes b)= t^{\frac{m-n+1}{4}} \otimes a $ when $m<l$ or \{$d_{m-n+1}(1 \otimes b)= t^{\frac{m-n+1}{4}} \otimes a $ or $d_{m-n+1}(1 \otimes c)= t^{\frac{m-n+1}{4}} \otimes a $ or $d_{m-n+1}(1 \otimes b)= d_{m-n+1}(1 \otimes c)= t^{\frac{m-n+1}{4}} \otimes a$\} when $m=l.$   By Proposition \ref{prop 4.5}, $d_{l+1} \neq 0.$ So,  we must have $l\equiv 3$(mod $4$) and $d_{l+1}(1 \otimes c)=t^{\frac{l+1}{4}}\otimes 1,$ when $m<l$ or  \{$d_{l+1}(1 \otimes c)=t^{\frac{l+1}{4}}\otimes 1$ or $d_{l+1}(1 \otimes b)=t^{\frac{l+1}{4}}\otimes 1$ or $d_{l+1}(1 \otimes (b+c))=t^{\frac{l+1}{4}}\otimes 1$  \} when $m=l.$  Consequently, $E_{l+2}^{*,*}\cong E^{*,*}_{\infty}$ and hence $E^{k,0}_{\infty}= \mathbb{Q}, 0 \leq k \equiv 0(\mbox{mod} ~4)\leq l; E^{k,n}_{\infty}= \mathbb{Q},0 \leq k\equiv 0(\mbox{mod} ~4) < m-n;$ and trivial otherwise.  For $n \equiv 0$(mod ~4), we get	
		\[
		H^j(X_G)=
		\begin{cases}
			\mathbb{Q}  & 0 \leq j \equiv 0(\mbox{mod} ~4)<  n~\&~m<j\equiv 0(\mbox{mod} ~4) < l \\
			\mathbb{Q} \oplus \mathbb{Q}  & n\leq j \equiv 0(\mbox{mod} ~4) < m\\
			0 & \text{otherwise}
		\end{cases}
		\] 
		For $n \equiv d(\mbox{mod} ~4),$  $1 \leq d \leq 3,$ we get 	
		\[
		H^j(X_G)=
		\begin{cases}
			\mathbb{Q} & 0 \leq j \equiv 0(\mbox{mod} ~4)<  n,n\leq j\equiv d(\mbox{mod} ~4) < m ~\&~ m<j\equiv 0(\mbox{mod}~ 4)<l \\
			0 & \text{otherwise}
		\end{cases}
		\] 
		Let $y \in E_{\infty}^{4,0}$ and $u \in E_{\infty}^{0,n} $  be elements corresponding to the permanent cocyles   $t\otimes 1$ and $1\otimes a,$  respectively. Clearly, $y^{\frac{l+1}{4}}=u^2=y^{\frac{m-n+1}{4}}u=0.$ Then the total complex is given by Tot$E_{\infty}^{*,*}=\mathbb{Q}[y,u]/<y^{\frac{l+1}{4}},u^2,y^{\frac{m-n+1}{4}}u>$ where deg $y=4$ and deg $u=n.$ We get an element $z \in H^n(X_G)$ corresponding to $u$ such that $i^*(z)=a.$ So, we have $z^2=a_0y^{\frac{n}{2}}+a_1y^{\frac{n}{4}}z $ and $ y^{\frac{m-n+1}{4}}z=a_2y^{\frac{m+1}{4}},$ $a_0,a_1,a_2 \in \mathbb{Q},$  where $a_0=0$ if $n$ is odd or $l<2n,$ $a_1=0$ if $n\not\equiv 0$(mod $4$) or $m<2n~\&~a_2=0$ if $m+1 \not\equiv 0$(mod $4$) or $m=l.$ Thus the cohomology ring $H^*(X/G)$ is given by $$\mathbb{Q}[y,z]/<y^{\frac{l+1}{4}},z^2-a_0y^{\frac{n}{2}}-a_1y^{\frac{n}{4}}z,zy^{\frac{m-n+1}{4}}-a_2y^{\frac{m+1}{4}}>$$ where deg $y=4,$  deg $z=n,$  $m-n\equiv 3$ (mod $4$) and $ l \equiv 3$(mod $4$).  This realizes possibility (1).\\
		\textbf{Case(ii)} Let $d_{l-m+1}\neq 0.$ Then  $l-m \equiv 3$(mod $4$) and  $d_{l-m+1}(1 \otimes c)= t^{\frac{l-m+1}{4}}\otimes b$ when $n<m$ or \{$d_{l-m+1}(1 \otimes c)= t^{\frac{l-m+1}{4}} \otimes b$  or $d_{l-m+1}(1 \otimes c)= t^{\frac{l-m+1}{4}} \otimes a$\} when $n=m.$ As $G$ acts freely on $X,$ $l\leq n+m$ and  $d_{n+1}\neq 0.$ Clearly,  $n\equiv 3 $(mod $4$) and $d_{n+1}(1 \otimes a)= t^{\frac{n+1}{4}} \otimes 1$ when $n<m$ or \{$d_{n+1}(1 \otimes a)=t^{\frac{n+1}{4}} \otimes 1$ or $d_{n+1}(1 \otimes b)=t^{\frac{n+1}{4}} \otimes 1$\} when $n=m.$ Thus,  $E_{n+2}^{*,*}\cong E_{\infty}^{*,*},$ and hence $E_{\infty}^{k,0}= \mathbb{Q}$ for $0 \leq k \equiv 0(\mbox{mod} ~4)<n;$  $E_{\infty}^{k,m}=\mathbb{Q}$ for $ 0 \leq k \equiv 0(\mbox{mod} ~4)< l-m;$ and trivial otherwise. Consequently,
		\[
		H^j(X_G)=
		\begin{cases}
			\mathbb{Q}  & 0 \leq j \equiv 0(\mbox{mod} ~4)<  n~\&~m \leq j\equiv d(\mbox{mod} ~4) < l, 0 \leq d \leq 4 \\
			0 & \text{otherwise}
		\end{cases}
		\] 
		Let $y \in E_{\infty}^{4,0}$ and  $u \in E_{\infty}^{0,m} $  be elements corresponding to the permanent cocyles   $t\otimes 1$ and $1\otimes b$  respectively. Clearly, $y^{\frac{n+1}{4}}=u^2=y^{\frac{l-m+1}{4}}u=0.$ Then the total complex is given by Tot$E_{\infty}^{*,*}=\mathbb{Q}[y,u]/<y^{\frac{n+1}{4}},u^2,y^{\frac{l-m+1}{4}}u>$ where deg $y=4$ and deg $u=m.$ We get an element $z \in H^n(X_G)$ corresponding to $u$ such that $i^*(z)=a$ or $b.$  Thus the cohomology ring $H^*(X/G)$ is given by 
		$$\mathbb{Q}[y,z]/<y^{\frac{n+1}{4}},z^2,y^{\frac{l-m+1}{4}}z>$$ where deg $y=4,$ deg $z=m,$  $l-m\equiv 3$ (mod $4$) and $ n \equiv 3$(mod $4$). This realizes possibility (2).\\
		\textbf{Case(iii)} Let $d_{l-n+1}\neq 0.$ Then $l-n \equiv 3$(mod $4$) and we consider $n<m<l.$ So, we have   $d_{l-n+1}(1 \otimes c)=t^{\frac{l-n+1}{4}}\otimes 1.$  As $G$ acts freely on $X,$   $d_{m+1} \neq 0.$ Thus, $ m \equiv 3$(mod $4$) and   $d_{m+1}(1 \otimes b) =t^{\frac{m+1}{4}} \otimes 1.$  Thus, $E_{m+2}^{*,*}= E_{\infty}^{*,*},$ and hence $E_{\infty}^{k,0}= \mathbb{Q}$ for $0 \leq k \equiv 0(\mbox{mod} ~4)<m;$ $E_{\infty}^{k,n}=\mathbb{Q}$ for $ 0 \leq k \equiv 0(\mbox{mod} ~4)< l-n;$ and trivial otherwise. For $n \equiv 0$(mod $4$), we get	
		\[
		H^j(X_G)=
		\begin{cases}
			\mathbb{Q}  & 0 \leq j \equiv 0(\mbox{mod} ~4)<  n~\&~m<j\equiv 0(\mbox{mod} ~4) < l \\
			\mathbb{Q} \oplus \mathbb{Q}  & n\leq j \equiv 0(\mbox{mod} ~4) < m\\
			0 & \text{otherwise}
		\end{cases}
		\] 
		For $n \equiv d(\mbox{mod} ~4),1 \leq d \leq 3,$  we get 	
		\[
		H^j(X_G)=
		\begin{cases}
			\mathbb{Q}  & 0 \leq j \equiv 0(\mbox{mod} ~4)<  m~\&~n\leq j\equiv d(\mbox{mod}~4) < l \\
			0 & \text{otherwise}
		\end{cases}
		\] 
		Let $y \in E_{\infty}^{4,0}$ and $u \in E_{\infty}^{0,n} $  be elements corresponding to the permanent cocyles   $t\otimes 1$  and $1\otimes a,$ respectively. Clearly, $y^{\frac{m+1}{4}}=u^2=y^{\frac{l-n+1}{4}}u=0.$ Then the total complex is given by Tot$E_{\infty}^{*,*}=\mathbb{Q}[y,u]/<y^{\frac{m+1}{4}},u^2,y^{\frac{l-n+1}{4}}u>,$ where deg $y=4$ and deg $u=n.$ We get an element $z \in H^n(X_G)$ corresponding to $u$ such that $i^*(z)=a.$ We have  $z^2=a_0y^{\frac{n}{2}}+a_1y^{\frac{n}{4}}z,$ $a_0,a_1 \in \mathbb{Q},$ where ~$a_0=0$ if $n$ is odd or $m<2n$  and $a_1=0$ if $n \not\equiv 0$(mod $4$) or $l<2n.$ Thus the cohomology ring $H^*(X/G)$ is given by $$\mathbb{Q}[y,z]/<y^{\frac{l+1}{4}},z^2-a_0y^{\frac{n}{2}}-a_1y^{\frac{n}{4}}z,y^{\frac{l-n+1}{4}}z>$$ where deg $y=4,$  deg $z=n,$ $l-n\equiv 3$ (mod $4$) and $ m \equiv 3$(mod $4$).  This realizes possibility (3). 
	\end{proof}
	\noindent Next, we realize some possibilities of our main Theorems of this section.
	\begin{example}\label{4.8}
		Let $W$ and $Z$ be free  $G$-spaces and $A=W\cap Z $ be a closed  subspace of  $W.$ If $A$ is invariant under $G$ then  the adjunction space $W \cup_{A} Z$  obtained by attaching $W$ and $Z$ along with identity map $i: A \rightarrow A$ is also a free $G$-space. It is easy to observe that $(W\cup_A Z)/G = W/G \cup_{A/G} Z/G.$ Consider $W=\mathbb{S}^{4m+3}, Z=\mathbb{S}^{4l+3}$ and $A=\mathbb{S}^{4n+3} = \mathbb{S}^{4m+3} \cap \mathbb{S}^{4l+3}$ in
		Example \ref{example 4.1}. We have a free action of $G=\mathbb{S}^3$  on $Y = W \cup_{A} Z,$  and hence the orbit space $Y/G$  is $ \mathbb{H}P^{m} \cup_{\mathbb{H}P^{n}} \mathbb{H}P^{l}.$ We have seen that $Y \sim_\mathbb{Q}\mathbb{S}^{4n+4}\vee\mathbb{S}^{4m+3}\vee\mathbb{S}^{4l+3},$ where $1 \leq n  <m <l.$ \\ 
		\noindent We use the Gysin  sequence to compute the cohomology algebra of the orbit space $Y/G.$  As $Y/G$ is connected, $H^0(Y/G) \cong \mathbb{Q}.$ Clearly, $H^{i}(Y/G) \cong H^i(Y)$ for $0 \leq i \leq 2.$ Also, we get    $\cup : H^{i-3}(Y/G) \rightarrow H^{i+1}(Y/G)$ are isomorphisms  for $3 \leq i  < 4n+4,$ and  hence  $H^i(Y/G) \cong \mathbb{Q}~ \forall~ 0 \leq i \equiv 0 (\mbox{mod}~ 4) < 4n+4,$ and $H^i(Y/G)$ are trivial homomorphisms  $  \forall~ 0 \leq i \not\equiv 0 (\mbox{mod}~ 4) < 4n+4.$ Let  $y \in H^4(Y/G)$ be the characteristic class of $3$-sphere bundle   $\mathbb{S}^3 \hookrightarrow Y  \stackrel{\pi} \rightarrow Y/\mathbb{S}^3.$ For  the fibre bundle $G\hookrightarrow \mathbb{S}^{4l+3}  \stackrel{\pi'}\rightarrow \mathbb{H}P^l,$ the inclusion map $(i',i): (\mathbb{S}^{4l+3},\pi', \mathbb{H}P^{l}) \rightarrow (Y,\pi,Y/G)$ is a bundle morphism. So, we get $y^i \neq 0$ for $1 \leq i \leq l.$ Clearly, $y^{l+1}=0.$  By the exactness of the sequence we get $H^{4n+4} (Y/G)\cong \mathbb{Q} \oplus \mathbb{Q} ,$ $H^i(Y/G) \cong H^{i+4}(Y/G) \cong \mathbb{Q}\oplus \mathbb{Q}$ for $4n+4 \leq i \equiv 0 (\mbox{mod}~ 4) < 4m+3$ and $H^i(Y/G) = 0$ for $4n+4 \leq i \not\equiv 0 (\mbox{mod}~ 4) < 4m+3.$  Note that $H^{4m+3}(Y/G)$ must be trivial otherwise we get $H^i(Y/G) \neq 0$ for $i>4l+3,$ which  contradict Proposition \ref{prop 4.5}. Further, $H^{4m+4}(Y/G) \cong \mathbb{Q}$ and  $H^i(Y/G) \cong H^{i+4}(Y/G)$ for $4m+4 \leq i \equiv 0 (\mbox{mod}~ 4) < 4l+3.$ Let $z$ be  generator of $H^{4n+4} (Y/G)$ such that $\pi^*(z)=a.$ So, $y^{{m-n}}z=a_2y^{{m+1}}, a_2 \in \mathbb{Q}.$  Clearly,  $ z^2=a_0y^{{2n+2}}+a_1y^{{n+1}}z$ where $a_0,a_1 \in \mathbb{Q}$ and $a_0=0$ if $4l+3<8n+8,a_1=0$ if $4m+3<8n+8.$ Thus, the cohomology ring $H^*(Y/G) \cong \mathbb{Q}[y,z]/<y^{{l+1}},z^2-a_0y^{{2n+2}}-a_1y^{{n+1}}z,zy^{{m-n}}-a_2y^{{m+1}}>$ where deg $y=4$ and deg $z=4n+4.$ This realizes possibility (1) of Theorem \ref{thm 4.8}.	\end{example}
	\begin{example}
		As in Example \ref{4.8}, we can define free actions of (i) $G=\mathbb{Z}_2$ on $Y \sim_{\mathbb{Z}_2} \mathbb{S}^{n+1} \vee \mathbb{S}^m \vee \mathbb{S}^l,$  $m \neq n+2 ~\&~ l \neq m+1,$ (ii) $G=\mathbb{Z}_p,~ p>2$ a prime  on $Y \sim_{\mathbb{Z}_p} \mathbb{S}^{2n+2} \vee \mathbb{S}^{2m+1} \vee \mathbb{S}^{2l+1},$ and (iii) $G=\mathbb{S}^1$ on $Y \sim_{\mathbb{Q}} \mathbb{S}^{2n+2} \vee \mathbb{S}^{2m+1} \vee \mathbb{S}^{2l+1}.$ The orbit spaces $Y/G$  of these actions are $ \mathbb{R}P^m \cup_{\mathbb{R}P^n} \mathbb{R}P^l,$ $ L^m \cup_{L^n} L^l$ and $ \mathbb{C}P^m \cup_{\mathbb{C}P^n} \mathbb{C}P^l,$ respectively where $L^n$ denotes the  lens space.  This realizes possibility (1) of Theorems \ref{thm 4.4}, \ref{zp action} and
		\ref{thm 4.7}. 
	\end{example}
	\section{Applications}
	\noindent  In this section,  we derive the Borsuk-Ulam type results for free actions of $G=\mathbb{S}^d,~ d=0,1,3$ on $X \sim_R \mathbb{S}^n \vee \mathbb{S}^m  \vee {S}^l, $ $1 \leq n \leq m \leq l ,$  where    $R=\mathbb{Z}_2$ for $G=\mathbb{Z}_2$ and $R=\mathbb{Q}$ for $G=\mathbb{S}^d,d=1,3.$ We determine the nonexistence of $G$-equivariant maps between  $ X$ and $ \mathbb{S}^{(d+1)k+d}$ where $\mathbb{S}^{(d+1)k+d}$ equipped with  standard free actions of $G.$\\
	\indent Recall that \cite{floyd, anju} the index (respectively, co-index) of a $G$-space $X$ is the greatest integer $k$ (respectively, the lowest integer $k$)  such that there exists a $G$-equivariant map $\mathbb{S}^{(d+1)k+d} \rightarrow X$ (respectively, $ X \rightarrow  \mathbb{S}^{(d+1)k+d}$).\\
	\indent	By  Theorems \ref{thm 4.4}, \ref{thm 4.7} and \ref{thm 4.8}, it is clear that  the largest integer  $s$ such that $w^s \neq 0$ is for some $s\in \{n,m,l\},$ where $w \in H^{d+1}(X/G)$ is the  characteristic class of the principle $G$-bundle $G \hookrightarrow X \rightarrow X/G.$ We have index$(X)\leq s$ \cite{floyd, anju}. So, we have
	\begin{theorem}
		Let $G=\mathbb{S}^d, ~d=0,1,3$ act freely on $X \sim_R \mathbb{S}^n \vee \mathbb{S}^m  \vee {S}^l,$ where $ 1 \leq n \leq m \leq l$ and $R= \mathbb{Z}_2$ or $
		\mathbb{Q}.$ Then there does not exist $G$-equivariant map from $\mathbb{S}^{(d+1)k+d} \rightarrow X,$ for $k > s,$ where $s$ is one of $n,m$ or $l.$
	\end{theorem}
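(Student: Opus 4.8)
The plan is to deduce the statement from the orbit-space computations of Section~4 together with the naturality of the characteristic class of a principal $G$-bundle; in effect the theorem is a repackaging of the inequality index$(X)\le s$ quoted from \cite{floyd, anju} just before the statement.

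First I would recall that, since $G=\mathbb{S}^d$ acts freely on the finite CW-complex $X$, Proposition~\ref{tom} identifies $X_G$ with $X/G$, and there is the principal $G$-bundle $G\hookrightarrow X\stackrel{\pi}{\rightarrow}X/G$ with characteristic class $w\in H^{d+1}(X/G;R)$ --- the first Stiefel--Whitney class of the double cover when $d=0$, the rational Euler class when $d=1$, and (using the orientability hypothesis, so that the Gysin sequence applies) the degree-$4$ class of the sphere bundle when $d=3$. Reading off Theorems~\ref{thm 4.4}, \ref{thm 4.7}, \ref{thm 4.8}, in each of the three possible cases the degree-$(d+1)$ generator $y$ occurring there is exactly $w$, and the relations $y^{(l+1)/(d+1)}=0$, $y^{(n+1)/(d+1)}=0$, $y^{(m+1)/(d+1)}=0$ (read as $y^{l+1}=0$, etc.\ when $d=0$) show that the largest integer $s$ with $w^{s}\neq 0$ lies in $\{n,m,l\}$.

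Next I would argue by contraposition. Suppose there is a $G$-equivariant map $\varphi\colon \mathbb{S}^{(d+1)k+d}\to X$, where $\mathbb{S}^{(d+1)k+d}$ carries the standard free $G$-action; its orbit space $P_k$ is $\mathbb{R}P^k$, $\mathbb{C}P^k$ or $\mathbb{H}P^k$ according as $d=0,1,3$, with $H^{*}(P_k;R)=R[w_k]/\langle w_k^{k+1}\rangle$ and $w_k\in H^{d+1}(P_k;R)$ the characteristic class of $G\hookrightarrow\mathbb{S}^{(d+1)k+d}\to P_k$, so that $w_k^{k}\neq 0$. Equivariance of $\varphi$ makes the bundle $\mathbb{S}^{(d+1)k+d}\to P_k$ isomorphic to the pullback of $X\to X/G$ along the induced map $\bar\varphi\colon P_k\to X/G$; hence $\bar\varphi^{*}(w)=w_k$ by naturality of the characteristic class, and therefore $\bar\varphi^{*}(w^{k})=w_k^{k}\neq 0$. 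This forces $w^{k}\neq 0$ in $H^{(d+1)k}(X/G;R)$, so $k\le s$ by the previous paragraph; contrapositively, no $G$-equivariant map $\mathbb{S}^{(d+1)k+d}\to X$ exists when $k>s$.

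I expect the only delicate point to be the identification used in the first paragraph: that the orbit-space generator $y$ produced in the proofs of Theorems~\ref{thm 4.4}, \ref{thm 4.7}, \ref{thm 4.8} is genuinely the characteristic class $w$, i.e.\ the $\pi^{*}$-image of the pullback of a generator of $H^{d+1}(B_G;R)$ along the classifying map of $X\to X/G$. This is exactly the observation recorded in the discussion preceding the theorem, and granting it (equivalently, granting index$(X)\le s$ from \cite{floyd, anju}) the remainder is the routine naturality/contrapositive bookkeeping above.
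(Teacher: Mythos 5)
Your proposal is correct and follows essentially the same route as the paper: the paper likewise reads off from Theorems \ref{thm 4.4}, \ref{thm 4.7} and \ref{thm 4.8} that the top nonvanishing power of the characteristic class $w\in H^{d+1}(X/G)$ is some $s\in\{n,m,l\}$ and then invokes the bound index$(X)\le s$ from the cited references. The only difference is that you spell out the naturality/pullback argument behind that cited inequality, which the paper leaves to \cite{floyd, anju}.
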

	Recall that the Volovikov's index $i(X)$ is the smallest $r$ such that $d_r: E_r^{k-r,r-1} \rightarrow E_r^{k,0}$ is nontrivial for some $k,$ in the Leray-Serre spectral sequence of the fibration  $ X \stackrel{i} \hookrightarrow X_G \stackrel{\pi} \rightarrow B_G.$ Form Theorems \ref{thm 4.4}, \ref{thm 4.7} and \ref{thm 4.8}, we have $i(X)$ is  $ n+1 ,m+1$ or $l+1.$ By taking $Y=\mathbb{S}^{(d+1)k+d}$ in Theorem 1.1 \cite{co}, we have
	\begin{theorem}
		Let $G=\mathbb{S}^d,~d=0,1,3$ act freely on a finite CW-complex $X \sim_R \mathbb{S}^n \vee \mathbb{S}^m \vee \mathbb{S}^l, 1 \leq n\leq m \leq l .$ Then there is no $G$-equivariant map $f: X \rightarrow \mathbb{S}^{(d+1)k+d}$  if $(d+1)k+d< i(X)-1,k \geq 1,$ where $i(X)=n+1,m+1$ or $l+1.$ 
		
	\end{theorem}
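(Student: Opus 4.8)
The plan is to read off the Volovikov index $i(X)$ from the Leray--Serre spectral sequence computations already carried out in the proofs of Theorems~\ref{thm 4.4}, \ref{thm 4.7} and \ref{thm 4.8} (these cover $G=\mathbb{Z}_2$, $\mathbb{S}^1$ and $\mathbb{S}^3$, i.e.\ $d=0,1,3$), and then to substitute $Y=\mathbb{S}^{(d+1)k+d}$, with its standard free $G$-action, into the equivariant mapping criterion of Theorem~1.1 of \cite{co}.

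First I would determine $i(X)$. Recall that $i(X)$ is the least $r$ for which some $d_r\colon E_r^{\,k-r,\,r-1}\to E_r^{\,k,0}$ is nontrivial, i.e.\ the first page of the spectral sequence of $X\hookrightarrow X_G\to B_G$ on which a differential reaches the base row $E_r^{\,*,0}$. In every situation handled in those three theorems the $E_2$-page is concentrated in the fibre-degrees $0,n,m,l$ (this includes the nontrivial $\pi_1(B_G)$-action subcases of the $\mathbb{Z}_2$-statement, by Proposition~\ref{re}); hence a differential can reach the base row only as a transgression $d_{q+1}$ of a surviving fibre generator with $q\in\{n,m,l\}$. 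The computations in those proofs show that any nonzero differential occurring before such a transgression---namely one of $d_{m-n+1}$, $d_{l-m+1}$, $d_{l-n+1}$---maps one positive fibre row onto another (into fibre-degree $n$ or $m$), and so misses the base row, while the last surviving fibre generator is then killed by a transgression forced by Proposition~\ref{prop 4.5}: this transgression is $d_{l+1}(1\otimes c)$ when $d_{m-n+1}\ne 0$, $d_{n+1}(1\otimes a)$ when $d_{l-m+1}\ne 0$, and $d_{m+1}(1\otimes b)$ when $d_{l-n+1}\ne 0$ (with the obvious degenerations when two or three of $n,m,l$ coincide, and the analogous transgressions of $a+b$, $b+c$ or $a+b+c$ in the nontrivial $\pi_1$-subcases). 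Since in each case nothing of lower filtration degree reaches the base row, $i(X)$ equals $l+1$, $n+1$ or $m+1$ accordingly; in particular $i(X)\in\{n+1,m+1,l+1\}$.

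Finally I would invoke Theorem~1.1 of \cite{co}, which gives a condition for the non-existence of $G$-equivariant maps between free $G$-spaces in terms of the Volovikov index of the source. Applying it with the source $X$ as above and the target $Y=\mathbb{S}^{(d+1)k+d}$ equipped with its standard free $G$-action, the condition becomes: there is no $G$-equivariant map $f\colon X\to\mathbb{S}^{(d+1)k+d}$ whenever $k\ge 1$ and $(d+1)k+d<i(X)-1$. Together with $i(X)\in\{n+1,m+1,l+1\}$ from the previous paragraph, this is exactly the assertion of the theorem.

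The only genuine work is the middle step: one must run through all subcases of Theorems~\ref{thm 4.4}, \ref{thm 4.7} and \ref{thm 4.8} (in particular the nontrivial $\pi_1(B_G)$-action subcases of the $\mathbb{Z}_2$ statement and the coincidence cases $n=m$, $m=l$ and $n=m=l$) and confirm in each that the first differential meeting the base row is precisely the transgression recorded above, with no class surviving to a later page that could transgress sooner. Since all the relevant differentials are already computed in those proofs, this is bookkeeping rather than new analysis, so the proof is short.
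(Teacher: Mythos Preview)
Your proposal is correct and follows essentially the same approach as the paper: the paper's argument is simply to observe (from the spectral sequence computations already carried out in Theorems~\ref{thm 4.4}, \ref{thm 4.7} and \ref{thm 4.8}) that the Volovikov index $i(X)$ is one of $n+1,m+1,l+1$, and then to invoke Theorem~1.1 of \cite{co} with $Y=\mathbb{S}^{(d+1)k+d}$. Your write-up supplies more detail on why the first differential reaching the base row is the transgression $d_{l+1}$, $d_{n+1}$ or $d_{m+1}$ in the respective subcases, but the structure and the cited input are identical.
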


	\bibliographystyle{plain}

\end{document}